\documentclass{amsart}
\usepackage[fontsize=12pt]{scrextend}
\usepackage{nicefrac}
\usepackage{enumitem}
\usepackage{graphicx}
\usepackage{mathrsfs}
\usepackage{xcolor}
\usepackage{bbm}
\usepackage[hidelinks]{hyperref}
\usepackage{cite}
\usepackage[normalem]{ulem}

\newtheorem{theorem}{Theorem}[section]
\newtheorem{proposition}[theorem]{Proposition}
\newtheorem{corollary}[theorem]{Corollary}
\newtheorem{lemma}[theorem]{Lemma}
\newtheorem{sublemma}[theorem]{Sublemma}
\newtheorem{subsublemma}[theorem]{Subsublemma}

\newtheorem*{thm}{Theorem}

\theoremstyle{definition}
\newtheorem{definition}[theorem]{Definition}

\theoremstyle{remark}
\newtheorem{remark}[theorem]{Remark}

\numberwithin{equation}{section}

%    Absolute value notation

%    Blank box placeholder for figures (to avoid requiring any
%    particular graphics capabilities for printing this document).

\textwidth=17.1cm
\textheight=23cm
\hoffset=-15mm
\voffset=-20mm
\allowdisplaybreaks[4]

\begin{document}
	
	\sloppy
\title[Frequently supercyclic operators and an $\mathscr{F}_\Gamma$-hypercyclicity criterion]{On frequently supercyclic operators and an $\mathscr{F}_\Gamma$-hypercyclicity criterion with applications}
%\title{Frequent, $\mathscr{U}$-frequent, and Reiterate $\Gamma$-Supercyclicity??? (alterar!)}
\author{Thiago R. Alves}
%    Address of record for the research reported here
\address{Departamento de Matem\'{a}tica,
	Instituto de Ci\^{e}ncias Exatas,
	Universidade Federal do Amazonas,
	69.080-900 -- Manaus -- Brazil}
%    Current address
\email{alves@ufam.edu.br}
%    \thanks will become a 1st page footnote.
\thanks{The first author was financed in part by the Coordena\c{c}\~{a}o de Aperfei\c{c}oamento de Pessoal de N\'{\i}vel Superior – Brasil (CAPES) – Finance Code 001 and FAPEAM}

\author{Geraldo Botelho}
%    Address of record for the research reported here
\address{Instituto de Matem\'atica e Estat\'istica,
	Universidade Federal de Uberlândia,
	38.400-902 -- Uberlândia -- Brazil}
%    Current address
\email{botelho@ufu.br}
%    \thanks will become a 1st page footnote.
\thanks{}

\author{Vinícius V. Fávaro}
%    Address of record for the research reported here
\address{Instituto de Matem\'atica e Estat\'istica,
	Universidade Federal de Uberlândia,
	38.400-902 -- Uberlândia -- Brazil}
%    Current address
\email{vvfavaro@ufu.br}
%    \thanks will become a 1st page footnote.
\thanks{The second and third authors were partially supported by FAPEMIG Grant APQ-01853-23}

%    General info
\subjclass[2020]{Primary 47A16; Secondary 47B01}

\date{}

\dedicatory{}

\keywords{Furstenberg family, hypercyclicity set, $\mathscr{F}_\Gamma$-hypercyclic operator, $\mathscr{F}$-hypercyclic scalar set.}

\begin{abstract}
Given a Furstenberg family $\mathscr{F}$ and a subset $\Gamma \subset \mathbb{C}$, we introduce and explore the notions of $\mathscr{F}_{\Gamma}$-hypercyclic operator and $\mathscr{F}$-hypercyclic scalar set. First, the study of $\mathscr{F}_\mathbb{C}$-hypercyclic operators yields new interesting information about frequently supercyclic, $\mathscr{U}$-frequently supercyclic, reiteratively supercyclic and supercyclic operators. Then we provide a criterion for identifying $\mathscr{F}_{\Gamma}$-hypercyclic operators. As applications of this criterion, we show that any unilateral pseudo-shift operator on $c_0(\mathbb{N})$ or $\ell_p(\mathbb{N})$, $1 \leq p < \infty$, is $\mathscr{F}_{\Gamma}$-hypercyclic for every unbounded subset $\Gamma$ of $\mathbb{C}$. Moreover, under the same condition on $\Gamma$, we show that any separable infinite-dimensional Banach space supports an $\mathscr{F}_{\Gamma}$-hypercyclic operator. Finally, our study provides sufficient and necessary conditions for a subset $\Gamma \subset \mathbb{C}$ to be a hypercyclic scalar set. These results give partial answers to a question raised by Charpentier, Ernst, and Menet in 2016.

%We introduce and explore the notions of frequent, $\mathscr{U}$-frequent, and reiterative $\Gamma$-supercyclicity. We provide a criterion for identifying operators that meet  \textcolor{red}{(have? satisfy?)} these notions. As applications of such criteria, we show that every weighted shift operator on $c_0(\mathbb{N})$ and $\ell_p(\mathbb{N})$, $1 \leq p < \infty$, is frequently supercyclic, and any separable infinite-dimensional Banach space supports a frequently supercyclic operator. Our study yields a necessary condition for frequent, $\mathscr{U}$-frequent, and reiterative hypercyclic scalar sets and provides a sufficient condition for $\mathscr{U}$-frequent and reiterative hypercyclic scalar sets. These results collectively offer partial solutions to a question raised by Charpentier, Ernst, and Menet.

\end{abstract}

\maketitle

\section{Introduction and overview of the main results}
\label{Sc-1}

Throughout this article,  $\mathbb{N}$ denotes the set of positive integers, $\mathbb{N}_0 = \mathbb{N} \cup \{0\}$ and $\mathscr{P}(\mathbb{N}_0)$ denotes the family of all subsets of $\mathbb{N}_0$. By \textit{operator} we mean a \textit{bounded linear operator}, and all Banach spaces are supposed to be complex, separable and  infinite-dimensional. As usual, the symbol $\mathscr{L}(X)$ represents the space of all operators acting on a Banach space $X$.

Recalling a classic notion in the field, we say that an operator $T$ on a Banach space $X$ is \textit{hypercyclic} if there exists $x \in X$ whose orbit under $T$, given by
$$\mbox{Orb}(x,T) := \{x, Tx, T^2x, \ldots\},$$
is dense in $X$. In this case, $x$ is said to be a \textit{hypercyclic vector} for $T$. Hypercyclicity is a focal point in the study of linear dynamics. Beyond hypercyclicity, concepts like \textit{frequent hypercyclicity}  \cite{BayGri2004, BayGri2006} and \textit{$\mathscr{U}$-frequent hypercyclicity} \cite{Shkarin09} have been introduced. Roughly speaking, these notions impose stronger conditions by requiring that the orbit of the operator visits each open set of the space with a frequency denser than merely infinitely many times. Precise definitions shall be given soon. %We will delve deeper into these distinctions in the upcoming discussion \textcolor{red}{(confirmar o inglês desta frase)}.

The \textit{lower} and \textit{upper asymptotic density} of a subset $E$ of $\mathbb N_0$  are defined as
\begin{align}
	\begin{split} \label{upper-dens-natural}
		\underline{d}(E) := \liminf_{n \to \mathbb \infty} \dfrac{\#(E \cap [0,n])}{n} \ \ \mbox{ and } \ \ \overline{d}(E) := \limsup_{n \to \mathbb \infty} \dfrac{\#(E \cap [0,n])}{n},
	\end{split}
\end{align}
respectively. Given an open subset $U$ of $X$, an operator $T$ on $X$, and $x\in X$, we denote
\begin{align*}
	\mathcal{N}_T(x,U) := \{n \in \mathbb N_0 : T^nx \in U\},
\end{align*}
where $T^0x := x$. If each set $\mathcal{N}_T(x, U)$ is non-empty for every open subset $U$ of $X$, then each such set must actually be infinite. In this case, as discussed previously, $T$ is hypercyclic. Now, if the orbit of $x$ under $T$ visits each open set $U$ more frequently, in the sense that
$$\mbox{$\underline{d}(\mathcal{N}_T(x,U)) > 0$ \ \ (resp. $\overline{d}(\mathcal{N}_T(x,U)) > 0$)},$$
then $T$ is called \textit{frequently hypercyclic} (resp. \textit{$\mathscr{U}$-frequently hypercyclic}).

The operator $T$ is called \textit{supercyclic} if there exists $x$ in $X$ such that the set
\begin{eqnarray} \label{aux-equation-39}
	\mathbb{C} \cdot \mbox{Orb}(x,T) := \{\lambda \cdot T^nx : \lambda \in \mathbb C, \, n \in \mathbb N_0\}
\end{eqnarray}
is dense in $X$. It is evident that all hypercyclic operators are also supercyclic. However, it is well known that there are supercyclic operators that are not hypercyclic (cf. \cite[Example 1.15]{BayMath-book}).

Significant changes occur when the set in (\ref{aux-equation-39}) is constrained by considering a proper subset $\Gamma$ of $\mathbb{C}$ rather than $\mathbb{C}$ itself. For instance, given $x \in X$, $T \in \mathscr{L}(X)$ and a non-empty finite set $\Gamma_0 \subset \mathbb{C} \setminus \{0\}$, a result independently established by Peris and Costakis (cf. \cite[Theorem 4]{Peris01} and \cite[Theorem 1]{Costakis02})  asserts that the set  $\Gamma_0 \cdot Orb(x, T)$ is dense in $X$ if and only if $Orb(x, T)$ is dense in $X$. Similarly, in \cite[Corollary 2]{Leo-SaaMul04} it is proved that for the unit circle $\mathbb{T} := \{\lambda \in \mathbb{C}: |\lambda| = 1\}$, the denseness of $\mathbb{T} \cdot \mbox{Orb}(x, T)$ in $X$ is equivalent to the denseness of the orbit of $x$ under $T$. This kind of results naturally prompts the following question:
	\begin{quote}
	What are the precise conditions that must be imposed on $\Gamma$ to ensure that the denseness of $\Gamma \cdot \text{Orb}(x, T)$ in $X$ implies that  $x$ is a hypercyclic vector for $T$?
	\end{quote}
This question was completely answered in \cite{CharErnMenet16}. Buil\-ding upon \cite{CharErnMenet16}, we define $T$ as \textit{$\Gamma$-supercyclic} if there exists $x$ in $X$ such that the set $\Gamma \cdot \mbox{Orb}(T, x)$ is dense in $X.$ And a non-empty subset $\Gamma$ of $\mathbb{C}$ is called a \textit{hypercyclic scalar set} if, regardless of the  Banach space $Y,$ the operator $S$ on $Y$ and the vector $y$ in $Y,$ it holds that $\overline{\Gamma \cdot \mbox{Orb}(y, S)} = Y$ if and only if $y$ is a hypercyclic vector for $S$. The following result was proved in \cite{CharErnMenet16}:%, it was established the following theorem:

\begin{thm}{\rm \cite[Theorem A]{CharErnMenet16}}
	A non-empty subset $\Gamma$ of $\mathbb{C}$ is a hypercyclic scalar set if and only if $\Gamma \setminus \{0\}$ is non-empty, bounded, and bounded away from zero.
\end{thm}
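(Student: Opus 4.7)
The biconditional splits into two implications. For the necessity direction, I would construct counterexamples whenever $\Gamma \setminus \{0\}$ fails one of the three conditions. If $\Gamma \subseteq \{0\}$, then $\Gamma \cdot \mbox{Orb}(y,S) \subseteq \{0\}$ is never dense in an infinite-dimensional space, yet hypercyclic operators exist, immediately violating the hypercyclic scalar set property. If $\Gamma$ is unbounded or not bounded away from zero, I would start from a supercyclic but non-hypercyclic operator (for instance, a suitable scalar multiple of the backward shift on $\ell_2$) with a supercyclic vector $y$, and then exploit the pathological scaling behaviour of $\Gamma$ (containing either arbitrarily large or arbitrarily small nonzero moduli) to show that $\Gamma \cdot \mbox{Orb}(y, S)$ remains dense even though $\mbox{Orb}(y, S)$ itself is not.

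For the sufficiency direction, set $m := \inf\{|\lambda| : \lambda \in \Gamma \setminus \{0\}\} > 0$ and $M := \sup\{|\lambda| : \lambda \in \Gamma \setminus \{0\}\} < \infty$, and assume $0 \notin \Gamma$ (this is without loss of generality, since removing $0$ preserves the density condition in a non-trivial $Y$). The easy half is clear: if $y$ is hypercyclic for $S$ and $\lambda_0 \in \Gamma$ is fixed, then $\lambda_0 \cdot \mbox{Orb}(y, S) = \mbox{Orb}(\lambda_0 y, S)$ is dense via the linear homeomorphism $v \mapsto \lambda_0 v$, and this set is contained in $\Gamma \cdot \mbox{Orb}(y, S)$.

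For the nontrivial half, assume $\overline{\Gamma \cdot \mbox{Orb}(y, S)} = Y$ and let $K := \overline{\Gamma}$, a compact subset of the annulus $\{z \in \mathbb{C} : m \leq |z| \leq M\}$. The set $Ky := \{\lambda y : \lambda \in K\}$ is then a compact subset of $Y$ (a continuous image of $K$), and
\[
\bigcup_{n \geq 0} S^n(Ky) = K \cdot \mbox{Orb}(y, S) \supseteq \Gamma \cdot \mbox{Orb}(y, S),
\]
so its closure equals $Y$. I would then invoke Feldman's extension of the Bourdon--Feldman somewhere-dense-orbit theorem --- which states that if a compact subset $K' \subset Y$ has $\bigcup_n S^n K'$ dense in $Y$, then some element of $K'$ is itself hypercyclic for $S$ --- to produce $\lambda_0 \in K$ such that $\lambda_0 y$ is hypercyclic for $S$. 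Since $\mbox{Orb}(\lambda_0 y, S) = \lambda_0 \cdot \mbox{Orb}(y, S)$ and multiplication by $\lambda_0 \neq 0$ is a homeomorphism of $Y$, hypercyclicity of $\lambda_0 y$ forces hypercyclicity of $y$, which is what we wanted.

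The main obstacle is justifying the Feldman-type compact version of Bourdon--Feldman in this setting. While the single-vector somewhere-dense-implies-everywhere-dense statement is classical, the compact-set extension requires a Baire category argument on the compact parameter space $K$, using that for every non-empty open $U \subseteq Y$ the set $\{\lambda \in K : \mbox{Orb}(\lambda y, S) \cap U \neq \emptyset\}$ is relatively open in $K$. The hypotheses on $\Gamma$ enter precisely at two points: boundedness of $\Gamma$ ensures that $Ky$ is compact, and the bounded-away-from-zero assumption makes $v \mapsto \lambda^{-1} v$ a uniformly controlled homeomorphism (with $|\lambda^{-1}| \leq 1/m$), which is what allows the final transfer of hypercyclicity from $\lambda_0 y$ back to $y$ and also ensures that the closed set $K \cdot \overline{\mbox{Orb}(y,S)}$ is genuinely closed in $Y$.
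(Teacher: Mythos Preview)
This theorem is quoted from \cite{CharErnMenet16}; the present paper does not prove it, and in fact \emph{invokes} \cite[Theorem~3.1]{CharErnMenet16} as a black box inside the proof of Lemma~\ref{lemma-hypset-2} when establishing the $\mathscr{F}$-generalization. So there is no in-paper proof to compare against. Your proposal, however, has a genuine gap in the sufficiency direction.

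The ``Feldman compact extension'' you invoke --- that a compact $K'\subset Y$ with $\bigcup_n S^n K'$ dense must contain a hypercyclic vector for $S$ --- is not a standard named theorem, and the Baire-category justification you sketch does not go through. For Baire on $K$ to produce a common point, you need each set $G_U:=\{\lambda\in K:\mathrm{Orb}(\lambda y,S)\cap U\ne\emptyset\}$ to be \emph{dense} in $K$, not merely open and non-empty. But density of $G_U$ in $K$ is exactly the statement that $W\cdot\mathrm{Orb}(y,S)$ meets $U$ for every non-empty relatively open $W\subset K$; in other words, that every such $W$ already has $W\cdot\mathrm{Orb}(y,S)$ dense in $Y$. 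Your hypothesis only gives this for $W=K$ itself, and nothing you have written rules out the possibility that all approximants to a given target are achieved by scalars lying in a single small arc of $K$. The argument is circular at this step, and the remarks about ``uniformly controlled'' inverses do not repair it.

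The route taken in \cite{CharErnMenet16}, and echoed in the paper's Lemmas~\ref{lemma-hypset-1} and~\ref{lemma-hypset-2} for general $\mathscr{F}$, avoids this by a finite-cover refinement: split $\Gamma$ into finitely many pieces of small diameter, use a pigeonhole argument (together with Bourdon--Feldman/transitivity to pass from somewhere-dense to everywhere-dense) to extract one piece $\Gamma_{j_0}$ for which $y$ is still $\Gamma_{j_0}$-supercyclic, iterate to get nested pieces with diameters tending to $0$, and finish with Cantor's intersection theorem --- the bounded-away-from-zero hypothesis guarantees the limit scalar is nonzero. Your necessity outline is sound.
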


Wondering about related results, the authors also questioned what could be proved within the realm of frequent and $\mathscr{U}$-frequent hypercyclicity (cf. \cite[Question 9]{CharErnMenet16} and also \cite[Question 10]{CharErn2020}). With the purpose of addressing this question, we introduce and explore the concepts outlined in the following definition.

\begin{definition} \label{def-hypset-and-Fhypope}
	\rm	Let $\Gamma \subset \mathbb C$ and let $\mathscr{F} \subset \mathscr{P}(\mathbb{N}_0)$ be a Furstenberg family, meaning that $\mathscr{F}$ is non-trivial, that is, $ \emptyset \notin \mathscr{F}\neq \emptyset$, and is hereditarily upward, that is, if $E \subset F$ and $E \in \mathscr{F}$, then $F \in \mathscr{F}.$
	\begin{enumerate}[label=(\Alph*)]
		\item An operator $T \in \mathscr{L}(X)$ is an \textit{$\mathscr{F}_{\Gamma}$-hypercyclic operator} if there exists $x$ in $X$ such that
		$$\mathcal{N}_T(\Gamma x, U) := \{n \in \mathbb N_0 : T^n(\Gamma x) \cap U \not= \emptyset\} \in \mathscr{F}$$
		for any non-empty open set $U$ in $X.$ In this case, $x$ is called an \textit{$\mathscr{F}_{\Gamma}$-hypercyclic vector} for $T.$
		\item The set $\Gamma$ is termed an \textit{$\mathscr{F}$-hypercyclic scalar set} if, regardless of the Banach space $Y,$ the operator $S$ on $Y,$ and the vector $y \in Y,$ it holds that $y$ is an $\mathscr{F}_\Gamma$-hypercyclic vector for $S$ if and only if $y$ is a $\mathscr{F}_{\{1\}}$-hypercyclic vector for $S.$
	\end{enumerate}
\end{definition}

Note that the set of $\mathscr{F}_{\{1\}}$-hypercyclic vectors for $T$ coincides with the set of $\mathscr{F}_{\{\delta\}}$-hypercyclic vectors for $T$ for any $\delta \neq 0$. For this reason, for any $\delta \neq 0$, a $\mathscr{F}_{\{\delta\}}$-hypercyclic operator (resp., $\mathscr{F}_{\{\delta\}}$-hypercyclic vector) will be simply referred to as an \textit{$\mathscr{F}$-hypercyclic operator} (resp., \textit{$\mathscr{F}$-hypercyclic vector}). It is worth noting that %several studies have been conducted to investigate
 this class of $\mathscr{F}$-hypercyclic operators have already been investigated for somewhat general Furstenberg families $\mathscr{F}$, see, e.g.,  \cite{BesMenetPerisPuig16,BesMenetPerisPuig19,BonillaErdmann18,BonillaErdmannMartinez22,CardMuro22-1,CardMuro22-2, GrivauxLopez-Martinez23}.

 Less general definitions of a Furstenberg family have appeared in several articles. In this work, we follow the definition provided in \cite{BesMenetPerisPuig16} (see also \cite{BayGriMathMen-preprint}). More stringent definitions can be found, for example, in \cite{BonillaErdmannMartinez22} and \cite{GrivauxLopez-Martinez23}, where it is also required that each element of the Furstenberg family $\mathscr{F}$ is infinite. In \cite{GrivauxLopez-Martinez-Peris-preprint}, it is additionally required that $A \cap [n, \infty[ \in \mathscr{F}$ whenever $n \in \mathbb{N}$ and $A \in \mathscr{F}$. Despite of being very general, with the definition we are adopting it is true that any $\mathscr{F}_\Gamma$-hypercyclic vector for an operator $T$ is $\Gamma$-supercyclic for $T$ (cf. Lemma \ref{newlemma}).

%{\color{violet} We observe that for any set $\Gamma \subset \mathbb C$, any Furstenberg family $\mathscr{F}$, and any operator $T \in \mathscr{L}(X)$, it holds:
%\begin{align} \label{F-hypimpliesG-sup}
%	\mbox{$x_0$ is $\mathscr{F}_\Gamma$-hypercyclic for $T$  $\Longrightarrow$ $x_0$ is $\Gamma$-supercyclic for $T$.}
%\end{align}
%Assume, for the sake of contradiction, that $x_0$ is not a $\Gamma$-supercyclic vector for $T$. Thus, there exists a non-empty open set $U$ such that $\mathcal{N}_T(\Gamma x_0, U)$ is finite. Define $V := U \setminus \bigcup\limits_{m \in \mathcal{N}_T(\Gamma x_0, U)} T^m(\mathbb{C} x_0)$, which remains a non-empty open set, since only finitely many straight lines are removed from $U$. It is clear that $\mathcal{N}_T(\Gamma x_0, V) = \emptyset$. However, since $x_0$ is an $\mathscr{F}_\Gamma$-hypercyclic vector for $T$, it follows that $\mathcal{N}_T(\Gamma x_0, V) \in \mathscr{F}$, leading to a contradiction because $\emptyset \not\in \mathscr{F}$.}

Important Furstenberg families in $\mathscr{P}(\mathbb{N}_0)$ consist of those formed by subsets with properties such as infinite cardinality, positive lower asymptotic density, positive upper asymptotic density, and positive upper Banach density. These families are denoted by $\mathscr{A}_\infty$, $\overline{\mathscr{D}}$, $\underline{\mathscr{D}}$, and $\overline{\mathscr{B}}$, respectively. For the sake of the reader, we recall that the upper Banach density of a subset $E$ of $\mathbb{N}_0$ is defined as
\begin{align}
	\begin{split} \label{upper-Banach-density}
		\overline{b}(E) := \lim_{s \to \infty} \left[ \limsup_{k \to \infty} \dfrac{\#(E \cap [k+1,k+s])}{s}\right].
	\end{split}
\end{align}
In the literature, $\overline{\mathscr{B}}$-hypercyclic operators are also called \textit{reiteratively hypercyclic} (cf. \cite[p. 547]{BesMenetPerisPuig16}).

The class of $\overline{\mathscr{D}}_\Gamma$-hypercyclic (resp., $\underline{\mathscr{D}}_{\Gamma}$ and ${\overline{\mathscr{B}}}_{\Gamma}$-hypercyclic) operators will be referred to as \textit{frequently $\Gamma$-supercyclic} (resp., \textit{$\mathscr{U}$-frequently $\Gamma$-supercyclic} and \textit{reiteratively $\Gamma$-supercyclic}) operators. Furthermore, if $\Gamma = \mathbb{C}$, we will simply call such operators \textit{frequently supercyclic} (resp., \textit{$\mathscr{U}$-frequently supercyclic} and \textit{reiteratively supercyclic}).

As a direct consequence of the definitions, for every $E \subset \mathbb N_0$, we have $$\underline{d}(E) \leq \overline{d}(E) \leq \overline{b}(E).$$
Therefore, the following inclusions between sets of operators on a Banach space $X$ hold:
\begin{align} \label{inclusions}
	%\small
	\begin{split}
		\left\{\substack{\mbox{frequently} \\ \mbox{supercyclic} \\ \mbox{operators}}\right\} \subset \left\{\substack{\mathscr{U}\mbox{-frequently} \\ \mbox{supercyclic} \\ \mbox{operators}}\right\} \subset \left\{\substack{\mbox{reiteratively} \\ \mbox{supercyclic} \\ \mbox{operators}}\right\} \subset \left\{\substack{\mbox{supercyclic} \\ \mbox{operators}}\right\}.
	\end{split}
\end{align}

If we write `hypercyclic' instead of `supercyclic' in \eqref{inclusions}, it is well-known that all such inclusions can be strict (cf. \cite[Theorem 5]{BayRuz2015} and \cite[Theorems 7 and 13]{BesMenetPerisPuig16}). Therefore, it is a natural question to ask whether the same happens for the inclusions in \eqref{inclusions}.

Our initial result, proved in Section \ref{Sc-2}, confirms that the inclusions in \eqref{inclusions} can indeed be strict. Also in Section \ref{Sc-2}, we establish the existence of an invertible frequently (resp. $\mathscr{U}$-frequently) supercyclic operator on a certain Banach space, whose inverse fails to be frequently (resp. $\mathscr{U}$-frequently) supercyclic. The proofs of these results are based on a theorem (Theorem \ref{sup-not-freqsup}), which is stated in the general context of $\mathscr{F}_{\Gamma}$-hypercyclic operators. This theorem essentially relates the existence (or non-existence) of an $\mathscr{F}$-hypercyclic operator on $X$ to the existence (or non-existence) of an $\mathscr{F}_{\mathbb C}$-hypercyclic operator on $X \oplus \mathbb C$.

A standard method to determine if an operator $T$ is hypercyclic (resp. frequently hypercyclic) is to apply the Hypercyclicity Criterion \cite{BesPeris99} (resp. Frequent Hypercyclicity Criterion \cite{BonillaErdmann07, BonillaErdmann09}). In \cite[Theorem 3]{BesMenetPerisPuig16}, the authors generalize these criteria to the setting of $\mathscr{F}$-hypercyclic operators. Building upon this work, in Section \ref{Sc-3} we present an $\mathscr{F}_{\Gamma}$-Hypercyclicity Criterion, which coincides with the $\mathscr{F}$-Hypercyclicity Criterion given in \cite{BesMenetPerisPuig16} when $\Gamma = \{1\}$. Moreover, we derive a corollary of this criterion %$\mathscr{F}_{\Gamma}$-Hypercyclicity Criterion,
which will be simpler to apply in the subsequent sections than the criterion itself.

In Section \ref{Sc-4} we provide two applications of the $\mathscr{F}_{\Gamma}$-Hypercyclicity Criterion. It is well-known that any unilateral weighted backward shift operator on $c_0$ or $\ell_p$, $1 \leq p < \infty$, is supercyclic. What happens %% a natural question arises as to whether the same holds
in the context of $\mathscr{F}_{\Gamma}$-hypercyclic operators? We address this question in Subsection \ref{Sub-sec-4.1} within the broader context of unilateral pseudo-shift operators. More precisely, as an application of the $\mathscr{F}_{\Gamma}$-Hypercyclicity Criterion, we show that all unilateral pseudo-shift operators are $\mathscr{F}_{\Gamma}$-hypercyclic for any unbounded subset $\Gamma$ of $\mathbb{C}$.

As a second application of the $\mathscr{F}_{\Gamma}$-Hypercyclicity Criterion, we show in Subsection \ref{Sub-sec-4.2} that, for every unbounded subset $\Gamma$ of $\mathbb{C}$,  any Banach space supports an $\mathscr{F}_{\Gamma}$-hypercyclic operator. To place this result in the landscape of linear dynamics, recall that: (i) Every Banach space supports a hypercyclic operator \cite{Ansari97, Bernal99}. (ii) There are Banach spaces that do not support $\mathscr{U}$-frequently (and thus, frequently) hypercyclic operators \cite{Shkarin09}.

Finally, in Section \ref{Sc-5}, we provide some sufficient and some necessary conditions to decide when a subset $\Gamma \subset \mathbb{C}$ is an $\mathscr{F}$-hypercyclic scalar set. Under certain assumptions on $\mathscr{F}$, we prove that $\Gamma \setminus \{0\}$ being non-empty, bounded, and bounded away from zero is sufficient for $\Gamma$ to be an $\mathscr{F}$-hypercyclic scalar set.  Moreover, we show that a necessary condition is that $\Gamma \setminus \{0\}$ is non-empty, bounded, and does not contain a sequence converging slower than any exponential function to zero. As a consequence of these general findings, we obtain some partial answers to the aforementioned question raised by Charpentier, Ernst, and Menet (cf. Remark \ref{rmk-question}).

A word about notation: $\mathbb{C}^* := \mathbb{C} \setminus \{0\}$ and the symbol $\alpha_k \searrow 0$ means that $(\alpha_k)_k$ is a decreasing sequence of real numbers converging to zero.

For deeper perspectives on the dynamics of linear operators, we refer the reader to \cite{BayMath-book, Gro-ErdMang2011-book, GrivauxMatheronMenet21}.

\section{The inclusions in (\ref{inclusions}) are all strict }\label{Sc-2}

%\section{\textcolor{blue}{Distinguishing ???} $\mathscr{F}_\mathbb{C}$-Hypercyclic Operators} \label{Sc-2}

For normed spaces $X$ and $Y$, we define $X \oplus Y$ as the normed space $$X \oplus Y := (X \times Y, {\| \cdot \|}_{X \oplus Y}), \text{ where } \, {\|(x, y)\|}_{X \oplus Y} := {\|x\|}_X + {\|y\|}_Y.$$
It is clear that $X \oplus Y$ is complete if and only if $X$ and $Y$ are. Moreover, for operators $T$ and $S$ acting on $X$ and $Y$ respectively, an operator $T \oplus S$ that acts on $X \oplus Y$ is defined by $$T \oplus S (x, y) := (Tx,Sy)  .$$
%\textcolor{blue}{\sout{Furthermore, it is evident that $T \oplus S$ is continuous if and only if $T$ and $S$ are as well.}}

First we prove a result in the general setting of $\mathscr{F}_{\Gamma}$-hypercy\-cli\-city. By $Id_{\mathbb{C}}$ we denote the identity operator on $\mathbb C$.

\begin{theorem} \label{sup-not-freqsup}
	Let $X$ be a Banach space and let $\mathscr{F} \subset \mathscr{P}(\mathbb N_0)$ be a Furstenberg family.  Then an operator $T \in \mathscr{L}(X)$ is $\mathscr{F}$-hypercyclic if and only if $T \oplus Id_{\mathbb{C}} \in \mathscr{L}(X \oplus \mathbb C)$ is ${\mathscr{F}}_{\mathbb{C}}$-hypercyclic.
\end{theorem}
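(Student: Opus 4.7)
My plan is to exhibit the natural candidate vectors in each direction of the equivalence. For the forward implication, if $x_0$ is an $\mathscr{F}$-hypercyclic vector for $T$, I will verify that $(x_0, 1)$ is an $\mathscr{F}_{\mathbb{C}}$-hypercyclic vector for $T \oplus Id_{\mathbb{C}}$. For the backward implication, if $(x_0, \alpha_0)$ is an $\mathscr{F}_{\mathbb{C}}$-hypercyclic vector for $T \oplus Id_{\mathbb{C}}$, I first rule out $\alpha_0 = 0$: otherwise, for $V := X \times W$ with $W$ a non-empty open subset of $\mathbb{C}^*$, the set $\mathcal{N}_{T \oplus Id_{\mathbb{C}}}(\mathbb{C}(x_0, 0), V)$ would be empty, contradicting the non-triviality of $\mathscr{F}$. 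Since $\mathbb{C}\cdot(x_0, \alpha_0) = \mathbb{C}\cdot(\alpha_0^{-1} x_0, 1)$ as subsets of $X \oplus \mathbb{C}$, I may normalize and assume $\alpha_0 = 1$ from the outset; the goal then reduces to showing that $x_0$ is an $\mathscr{F}$-hypercyclic vector for $T$.

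For the forward direction, given a non-empty open $V \subset X \oplus \mathbb{C}$, I choose a basic open rectangle $U \times W \subset V$ with $U \subset X$ and $W \subset \mathbb{C}$ non-empty open, together with some non-zero $\lambda_0 \in W$. The identity $(T \oplus Id_{\mathbb{C}})^n(\lambda_0(x_0, 1)) = (\lambda_0 T^n x_0, \lambda_0)$ yields the inclusion
$$\mathcal{N}_T(x_0, \lambda_0^{-1} U) \subset \mathcal{N}_{T \oplus Id_{\mathbb{C}}}(\mathbb{C}(x_0, 1), V),$$
and since $\lambda_0^{-1} U$ is non-empty open, the left-hand set lies in $\mathscr{F}$ by hypothesis, so the right-hand set does too by hereditary upwardness.

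For the backward direction, the key subtlety is converting a hit $(\mu T^n x_0, \mu) \in V$ with $\mu \in \mathbb{C}$ free into a genuine hit $T^n x_0 \in U$. Given $U \subset X$ non-empty open and a ball $B(u, \delta) \subset U$, I would take $V := B(u, \delta/2) \times B(1, \epsilon)$ with $\epsilon \in (0, 1)$ chosen small enough that $\epsilon(\|u\| + \delta/2)/(1 - \epsilon) < \delta/2$. For any $n \in \mathcal{N}_{T \oplus Id_{\mathbb{C}}}(\mathbb{C}(x_0, 1), V)$ there is some $\mu \in B(1, \epsilon)$ with $\mu T^n x_0 \in B(u, \delta/2)$; the bound $|\mu| \geq 1 - \epsilon$ gives $\|T^n x_0\| \leq (\|u\| + \delta/2)/(1 - \epsilon)$, and a triangle-inequality estimate then forces $T^n x_0 \in B(u, \delta) \subset U$. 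Hence $\mathcal{N}_{T \oplus Id_{\mathbb{C}}}(\mathbb{C}(x_0, 1), V) \subset \mathcal{N}_T(x_0, U)$, and hereditariness concludes the argument. I expect this $\epsilon$-tuning to be the principal obstacle, as it is the step that trades the loose scalar freedom inside the $\mathbb{C}$-orbit for an honest hypercyclic hit of $T$ itself.
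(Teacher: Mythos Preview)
Your proof is correct and follows essentially the same approach as the paper: in both directions you pick the natural candidate vector and, for the converse, exploit that the second coordinate pins the scalar $\mu$ near $1$, after which a triangle-inequality estimate forces $T^n x_0$ into the target ball. Your presentation of the converse is in fact somewhat cleaner than the paper's---you work directly with a single open set $U$ and build one auxiliary $V$, whereas the paper routes the same estimate through a countable dense set enumerated with repetitions and an indexed family $E_j\in\mathscr{F}$---but the underlying mechanism (bound $\|T^n x_0\|$ via $|\mu|\ge 1-\epsilon$, then split $\|T^n x_0 - u\|$) is identical.
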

\begin{proof} Assume first that
%First, let us prove the sufficiency part of the statement. Since
$T$ is $\mathscr{F}$-hypercyclic, and let $z_0 \in X$ be an  $\mathscr{F}$-hypercyclic vector  for $T$. Let us prove that $(z_0,1) \in X \oplus \mathbb C$ is an $\mathscr{F}_{\mathbb C}$-hypercyclic vector for $T \oplus Id_{\mathbb C}$. Fix $\varepsilon > 0$ and $(z,\beta) \in X \oplus \mathbb C$ with $\beta \not= 0$. Since $z_0$ is an $\mathscr{F}$-hypercyclic vector for $T$, there exists a set $E \in \mathscr{F}$ such that
$${\|T^{m} z_0 - \beta^{-1} \, z\|}_X < \beta^{-1} \, \varepsilon \ \ \mbox{for each  $m \in E$},$$
which implies
$${\|\beta \cdot (T \oplus Id_{\mathbb C})^m (z_0, 1) - (z, \beta) \|}_{X \oplus \mathbb C} < \varepsilon \ \ \mbox{for each $m \in E$}.$$
This proves that $T \oplus Id_{\mathbb{C}} $ is ${\mathscr{F}}_{\mathbb{C}}$-hypercyclic.
	
Conversely, suppose that $T \oplus Id_{\mathbb{C}} $ is ${\mathscr{F}}_{\mathbb{C}}$-hypercyclic. %Now, let us prove the necessity part of the theorem.
First, consider a countable dense subset $\mathcal{D}$ of $X$ and let ${(y_j)}_{j \geq 1}$ be a sequence in $\mathcal{D}$ such that each element of $\mathcal{D}$ appears infinitely many times in the sequence ${(y_j)}_{j \geq 1}$. The $\mathscr{F}_{\mathbb C}$-hypercyclicity of $T \oplus Id_{\mathbb C}$ implies the existence of an $\mathscr{F}_{\mathbb C}$-hypercyclic vector $(x_0,\lambda_0)$ for $T \oplus Id_{\mathbb C}$. 	Since it is evident that if $\lambda_0 = 0$, then the vector $(x_0,\lambda_0)$ is not $\mathscr{F}_{\mathbb C}$-hypercyclic for $T \oplus Id_{\mathbb C}$, we may suppose, without loss of generality, that $\lambda_0 \neq 0$. Our aim is to show that $x_0$ is an $\mathscr{F}$-hypercyclic vector for $T$. Since $(x_0,\lambda_0)$ is an $\mathscr{F}_{\mathbb{C}}$-hypercyclic vector for $T \oplus Id_{\mathbb C}$, for each positive integer $j$ there exist a subset $$E_j := \{n_1^{j} < n_2^{j} < n_3^{j} < \cdots\} \in \mathscr{F},$$ and a sequence ${(\lambda_{n_k^{j}})}_{k \geq 1}$ in $\mathbb{C}^*$ such that the following holds for every $k \geq 1$:
	\begin{align}
		\begin{split}\label{aux-exam}
			{\|\lambda_{n_k^{j}} \, T^{n_k^{j}}x_0 - y_j\|}_X + |\lambda_{n_k^{j}}\lambda_0 - \lambda_0| \leq \dfrac{1}{j}.
		\end{split}
	\end{align}

Choose $x \in \mathcal{D}$ and $\varepsilon > 0$ arbitrarily. By construction, $x$ appears infinitely many times in the sequence ${(y_j)}_{j \geq 1}$. Thus, by \eqref{aux-exam}, we can find an increasing sequence of positive integers ${(j_m)}_{m \geq 1}$ such that
	\begin{align} \label{aux-exam-1}
		{\|\lambda_{n_k^{j_m}} \, T^{n_k^{j_m}}x_0 - x\|}_X + |\lambda_{n_k^{j_m}}\lambda_0 - \lambda_0| \leq \dfrac{1}{j_m} \mbox{~for all~}  k, m \geq 1.
	\end{align}
Select a positive integer $p$ satisfying
	\begin{align}
		\begin{split}\label{aux-exam-5}
			0 < \dfrac{{\|x\|}_X + |\lambda_0|}{j_p \ |\lambda_0| - 1} < \varepsilon.
		\end{split}
	\end{align}
	Hence, $j_p \, |\lambda_0| > 1$ and, employing \eqref{aux-exam-1}, we can deduce that
	\begin{align}
		\begin{split}\label{aux-exam-4}
			|\lambda_{n_k^{j_p}}|^{-1} \leq \dfrac{j_p \, |\lambda_0|}{j_p \,|\lambda_0| - 1} \mbox{~for every~} k \geq 1.
		\end{split}
	\end{align}
Note that, for $k \geq 1$,
	\begin{align*}
		\begin{split}
			{\|T^{n_k^{j_p}}x_0 - x\|}_X &\leq |\lambda_{n_k^{j_p}}|^{-1} {\|\lambda_{n_k^{j_p}} T^{n_k^{j_p}}x_0 - x\|}_X + |\lambda_{n_k^{j_p}}|^{-1} |1 - \lambda_{n_k^{j_p}}| {\|x\|}_X \\ &\stackrel{\eqref{aux-exam-1}+\eqref{aux-exam-4}}{\leq}
			\dfrac{j_p \, |\lambda_0|}{j_p \, |\lambda_0| - 1} \cdot \dfrac{1}{j_p} + \dfrac{j_p \, |\lambda_0|}{j_p \, |\lambda_0| - 1} \cdot \dfrac{1}{j_p \, |\lambda_0|} {\|x\|}_X = \dfrac{{\|x\|}_X + |\lambda_0|}{j_p \, |\lambda_0| - 1} \stackrel{\eqref{aux-exam-5}}{<} \varepsilon.
		\end{split}
	\end{align*}
	From this, and using that $$E_{j_p} =\{n_1^{j_p} < n_2^{j_p} < n_3^{j_p} < \cdots\} \in \mathscr{F},$$
it follows that $T$ is ${\mathscr{F}}_{\mathbb{C}}$-hypercyclic.%the proof of the necessity part is complete.
\end{proof}

Achieving our first purpose, next we prove that the inclusions in \eqref{inclusions} can be strict.

\begin{corollary} \label{corollary-1}
	\begin{enumerate}[label=(\Alph*)]
		\item \label{item-A} There exists a  $\mathscr{U}$-frequently supercyclic operator $T \in \mathscr{L}(c_0(\mathbb N) \oplus \mathbb C)$ that is not frequently supercyclic.
		\item \label{item-B} There exists  a reiteratively supercyclic operator $T \in \mathscr{L}(c_0(\mathbb N) \oplus \mathbb C))$ that is not $\mathscr{U}$-frequently supercyclic.
		\item \label{item-C} For every $1 \leq p < \infty$, there exists a supercyclic operator $T \in \mathscr{L}(\ell_p(\mathbb N) \oplus \mathbb C)$, that is not reiteratively supercyclic.
	\end{enumerate}
\end{corollary}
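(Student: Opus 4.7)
The plan is a direct application of Theorem \ref{sup-not-freqsup} to import known strict separations from the classical $\mathscr{F}$-hypercyclic hierarchy into the $\mathscr{F}_{\mathbb{C}}$-hypercyclic (i.e., supercyclic-type) hierarchy via the direct sum construction. Under the dictionary set up in Section \ref{Sc-1}, an operator $S$ on $X \oplus \mathbb{C}$ is frequently (respectively, $\mathscr{U}$-frequently, reiteratively) supercyclic exactly when it is $\mathscr{F}_{\mathbb{C}}$-hypercyclic for the Furstenberg family $\overline{\mathscr{D}}$ (respectively, $\underline{\mathscr{D}}$, $\overline{\mathscr{B}}$). Thus Theorem \ref{sup-not-freqsup} reduces each item to producing an operator $T$ on $c_0(\mathbb{N})$ or $\ell_p(\mathbb{N})$ exhibiting the corresponding strict separation in the classical hypercyclic hierarchy, and then taking $T \oplus Id_{\mathbb{C}}$.

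For item \ref{item-A}, I would start from a $\mathscr{U}$-frequently hypercyclic operator $T$ on $c_0(\mathbb{N})$ that is not frequently hypercyclic; such an operator is provided by \cite[Theorem 5]{BayRuz2015}. Applying the forward direction of Theorem \ref{sup-not-freqsup} with $\mathscr{F} = \underline{\mathscr{D}}$ yields that $T \oplus Id_{\mathbb{C}}$ is $\mathscr{U}$-frequently supercyclic, while the contrapositive of the converse direction with $\mathscr{F} = \overline{\mathscr{D}}$ shows that $T \oplus Id_{\mathbb{C}}$ is not frequently supercyclic.

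Items \ref{item-B} and \ref{item-C} have the same structure with different inputs. For \ref{item-B}, I would take a reiteratively hypercyclic operator on $c_0(\mathbb{N})$ which is not $\mathscr{U}$-frequently hypercyclic (existing by \cite[Theorem 7]{BesMenetPerisPuig16}) and apply Theorem \ref{sup-not-freqsup} to the pair $(\overline{\mathscr{B}}, \underline{\mathscr{D}})$. For \ref{item-C}, for each $1 \leq p < \infty$, I would take a hypercyclic operator on $\ell_p(\mathbb{N})$ which fails to be reiteratively hypercyclic (existing by \cite[Theorem 13]{BesMenetPerisPuig16}) and apply Theorem \ref{sup-not-freqsup} to the pair $(\mathscr{A}_\infty, \overline{\mathscr{B}})$; note that supercyclicity of $T \oplus Id_{\mathbb{C}}$ corresponds precisely to $\mathscr{A}_\infty$-hypercyclicity, so the equivalence in Theorem \ref{sup-not-freqsup} applies.

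The main obstacle would usually be transporting the \emph{failure} of the stronger property across the direct sum, since a naive sum might accidentally acquire a denser orbit; but this is exactly what the non-trivial converse direction of Theorem \ref{sup-not-freqsup} rules out. Once that theorem is in hand, each item of the corollary reduces to verifying that the cited reference indeed produces an operator on the specified space exhibiting the desired strict separation, which is routine.
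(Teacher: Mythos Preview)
Your proposal is correct and follows essentially the same approach as the paper: both proofs invoke Theorem \ref{sup-not-freqsup} to transfer the known strict separations in the hypercyclic hierarchy (from \cite[Theorem 5]{BayRuz2015} and \cite[Theorems 7 and 13]{BesMenetPerisPuig16}) to the supercyclic hierarchy via the operator $T \oplus Id_{\mathbb{C}}$. The only cosmetic difference is that the paper names the explicit weighted shifts in each case, while you phrase the argument abstractly in terms of applying the two directions of Theorem \ref{sup-not-freqsup} to the appropriate pair of Furstenberg families.
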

\begin{proof}
	\ref{item-A} From \cite[Theorem 5]{BayRuz2015}, there exists a weighted backward shift operator $B_{\bf w}$ on $c_0(\mathbb N)$ that is $\mathscr{U}$-frequently hypercyclic but not frequently hypercyclic. Taking $X := c_0(\mathbb N) \oplus \mathbb C$ and $T := B_{\bf w} \oplus Id_{\mathbb C}$, it follows from Theorem \ref{sup-not-freqsup} that $T \in \mathscr{L}(X)$ is $\mathscr{U}$-frequently supercyclic but not frequently supercyclic.

	\ref{item-B} From \cite[Theorem 7]{BesMenetPerisPuig16}, there exists a weighted backward shift operator $B_{\bf w}$ on $c_0(\mathbb N)$ that is reiteratively hypercyclic but not $\mathscr{U}$-frequently hypercyclic. The result follows as in the proof of \ref{item-A}.

	\ref{item-C} In a similar fashion of the proofs of \ref{item-A} and \ref{item-B}, it is essential to identify a hypercyclic operator on $\ell_p(\mathbb N)$ that is not reiteratively hypercyclic. Once this is done, it is enough to apply Theorem \ref{sup-not-freqsup}. The existence of such an operator is proved in \cite[Theorem 13]{BesMenetPerisPuig16}. More precisely, it has been proven that the weighted shift operator $B_{\bf w}$ on $\ell_p(\mathbb N)$ with $w_k = ((k+1)/k)^{1/p}$ is a mixing operator (hence hypercyclic) that is not reiteratively hypercyclic. \end{proof}

It is usual in linear dynamics to ask
      whether the inverse of an invertible map satisfying a certain dynamical property also satisfies the same property. Such inquiries have been studied in recent years for frequently and $\mathscr{U}$-frequently hypercyclic operators (cf. \cite{Menet20, Menet21}). Next, we provide a negative solution concerning the inverses of frequently and $\mathscr{U}$-frequently supercyclic operators.

\begin{corollary}
	\begin{enumerate}[label=(\Alph*)]
		\item \label{item-A-2} For every $1 \leq p < \infty$, there exists an invertible $\mathscr{U}$-frequently supercyclic operator on $\ell_p(\mathbb N) \oplus \mathbb C$  whose inverse is not $\mathscr{U}$-frequently supercyclic.
		\item \label{item-B-2} There exists an invertible frequently supercyclic operator on $\ell_1(\mathbb N) \oplus \mathbb C$ whose inverse is not frequently supercyclic.
	\end{enumerate}
\end{corollary}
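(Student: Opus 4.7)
\medskip

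The plan is to deduce both parts directly from Theorem \ref{sup-not-freqsup}, exactly as Corollary \ref{corollary-1} was derived, by invoking known examples of invertible (frequently / $\mathscr{U}$-frequently) hypercyclic operators whose inverses lack the corresponding property. Such operators have been constructed by Menet in \cite{Menet20, Menet21}: there is an invertible $\mathscr{U}$-frequently hypercyclic operator $S$ on $\ell_p(\mathbb{N})$, for every $1 \leq p < \infty$, whose inverse is not $\mathscr{U}$-frequently hypercyclic, and an invertible frequently hypercyclic operator $S$ on $\ell_1(\mathbb{N})$ whose inverse is not frequently hypercyclic. I would quote these as a black box.

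For part \ref{item-A-2}, set $T := S \oplus Id_{\mathbb{C}} \in \mathscr{L}(\ell_p(\mathbb{N}) \oplus \mathbb{C})$ with $S$ as above. Since $S$ is invertible, so is $T$, with $T^{-1} = S^{-1} \oplus Id_{\mathbb{C}}$. Because $S$ is $\mathscr{U}$-frequently hypercyclic, i.e., $\underline{\mathscr{D}}$-hypercyclic, Theorem \ref{sup-not-freqsup} (applied to the Furstenberg family $\mathscr{F} = \underline{\mathscr{D}}$) yields that $T$ is $\underline{\mathscr{D}}_{\mathbb{C}}$-hypercyclic, which by definition means $T$ is $\mathscr{U}$-frequently supercyclic. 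Suppose, toward a contradiction, that $T^{-1}$ is $\mathscr{U}$-frequently supercyclic. Then $T^{-1} = S^{-1} \oplus Id_{\mathbb{C}}$ is $\underline{\mathscr{D}}_{\mathbb{C}}$-hypercyclic, so Theorem \ref{sup-not-freqsup} in the converse direction forces $S^{-1}$ to be $\underline{\mathscr{D}}$-hypercyclic, i.e., $\mathscr{U}$-frequently hypercyclic on $\ell_p(\mathbb{N})$, contradicting the choice of $S$.

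Part \ref{item-B-2} is handled in the same way on $\ell_1(\mathbb{N}) \oplus \mathbb{C}$, now taking $\mathscr{F} = \overline{\mathscr{D}}$ and applying Theorem \ref{sup-not-freqsup} to Menet's invertible frequently hypercyclic operator on $\ell_1(\mathbb{N})$ whose inverse is not frequently hypercyclic.

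The only real obstacle is finding and citing the correct statements in Menet's papers; the dynamical content is entirely off-loaded to those constructions, and the algebraic step $(S \oplus Id_{\mathbb{C}})^{-1} = S^{-1} \oplus Id_{\mathbb{C}}$ together with the biconditional form of Theorem \ref{sup-not-freqsup} makes both directions of the argument essentially automatic.
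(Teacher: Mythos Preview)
Your proposal is correct and follows essentially the same route as the paper: invoke Menet's invertible operators $S$ from \cite{Menet20,Menet21}, set $T:=S\oplus Id_{\mathbb C}$, and apply both directions of Theorem~\ref{sup-not-freqsup} (with $\mathscr F=\underline{\mathscr D}$ for part~\ref{item-A-2} and $\mathscr F=\overline{\mathscr D}$ for part~\ref{item-B-2}) together with $(S\oplus Id_{\mathbb C})^{-1}=S^{-1}\oplus Id_{\mathbb C}$. The only cosmetic difference is that you phrase the failure of the inverse as a contradiction, whereas the paper states it as a direct consequence of the biconditional in Theorem~\ref{sup-not-freqsup}.
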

\begin{proof}
	\ref{item-A-2} From \cite{Menet20}, there exists an invertible $\mathscr{U}$-frequently hypercyclic operator $S$ on $\ell_p$ whose inverse $S^{-1}$ is not $\mathscr{U}$-frequently hypercyclic. It follows from Theorem \ref{sup-not-freqsup} that $T := S \oplus Id_{\mathbb C}$ is $\mathscr{U}$-frequently supercyclic on $\ell_p \oplus \mathbb C$, and that its inverse, $T^{-1} = S^{-1} \oplus Id_{\mathbb C}$, is not $\mathscr{U}$-frequently supercyclic on $\ell_p \oplus \mathbb C$. %\textcolor{blue}{\sout{This concludes the proof of part \ref{item-A-2}}}.

	\ref{item-B-2} From \cite{Menet21}, there exists an invertible frequently hypercyclic operator $S$ on $\ell_1$ whose inverse $S^{-1}$ is not frequently hypercyclic. The proof follows, \textit{mutatis mutandis}, as in the proof of \ref{item-A-2}.
\end{proof}

\section{The $\mathscr{F}_\Gamma$-hypercyclicity criterion} \label{Sc-3}

To make it easier for the reader, we recall the definition of collections of series that are unconditionally convergent uniformly over an index set $I$.

\begin{definition}[{\cite[Remark 1]{BesMenetPerisPuig16}}]
	Let $X$ be a Banach space, let $I$ be a non-empty set and let $(x_{n,k})_{n,k} \subset X^{\mathbb N_0 \times I}$ be given. The collection of series $\sum\limits_{n=0}^\infty x_{n,k}$, $k \in I$, is said to be \textit{unconditionally convergent uniformly} in $k \in I$ if, for every $\varepsilon > 0$, there exists  $N \geq 1$ such that for every $k \in I$ and every finite set $F \subset [N,\infty[$, $$\biggl\| \sum_{n \in F} x_{n,k} \biggl\|_X < \varepsilon.$$
\end{definition}

The proof of the $\mathscr{F}_\Gamma$-hypercyclicity criterion below is strongly based on the proof of the $\mathscr{F}$-hypercyclicity criterion \cite[Theorem 3]{BesMenetPerisPuig16}. For the necessary modifications in the original proof to be clear, %, to introduce the $\mathscr{F}_\Gamma$-hypercyclicity criterion. We have closely followed \cite[Theorem 3]{BesMenetPerisPuig16}, making some slight necessary modifications. For the sake of completeness,
we have decided to include the whole proof.

For a given subset $\Gamma$ of $\mathbb{C}^*$, we define $\Gamma^{-1} :=\{a^{-1} : a \in \Gamma\}$. The symbol $\mathbbm{1}_A$ denotes the indicator function of $A \subset \mathbb{N}_0$ mapping $\mathbb{N}_0$ to itself, defined by $\mathbbm{1}_A(n) = 1$ if $n \in A$ and $\mathbbm{1}_A(n) = 0$ if $n \notin A.$

\begin{theorem}[{$\mathscr{F}_\Gamma$-Hypercyclicity Criterion}] \label{hypercyclicity-criterion}
	Let $X$ be a Banach space, let $T \in \mathscr{L}(X)$ and ${(\gamma_n)}_{n \geq 0} \subset \mathbb{C}^*$ be given, and let $\mathscr{F}$ be a Furstenberg family. %\textcolor{violet}{\sout{, and let $\mathscr{F}$ be a hypercyclicity set.}}
Suppose that there exist a dense subset $\mathcal{D}$ of $X$, maps $S_n \colon  \mathcal{D} \to X$, $n \geq 0$, and pairwise disjoint sets $A_k \in \mathscr{F}$, $k \geq 1$, satisfying the following conditions for each $y \in \mathcal{D}$:
	\begin{enumerate}[label=(\Roman*)]
		\item \label{item-1} There exists $k_0 \geq 1$ such that $\sum\limits_{n =0}^{\infty} \mathbbm{1}_{A_k}(n) \, \gamma_n \, S_ny$ converges unconditionally in $X$, uniformly in $k \geq k_0$.
		
		\item \label{item-2} For all $k_0 \in \mathbb N$ and $\varepsilon > 0$, there exists $k \geq k_0$ such that
		
		$\bullet$  $\biggl\|\sum\limits_{n \in F} \dfrac{\gamma_n}{\gamma_q} T^q S_n y\biggl\| \leq \varepsilon$ if $F \subset A_k$ is finite and $q \in \bigcup\limits_{\ell \geq 1} A_\ell \setminus F$,
		
		and further, for any $\delta > 0$, there exists $\ell_0 \geq 1$ such that
		
		$\bullet$ $\biggl\|\sum\limits_{n \in F} \dfrac{\gamma_n}{\gamma_q} T^q S_n y\biggl\| \leq \delta$ if $F \subset A_k$ is finite and $q \in \bigcup\limits_{\ell \geq \ell_0} A_\ell \setminus F$.
		
		\item \label{item-3} $\sup\limits_{q \in A_k} \|T^q S_q y - y\| \to 0$ as $k \to +\infty$.
	\end{enumerate}
	Then, $T$ is $\mathscr{F}_{{\Gamma}^{-1}}$-hypercyclic, where $\Gamma = \{\gamma_{n} : n \in \mathbb N_0\}$.
\end{theorem}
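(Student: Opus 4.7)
The plan is to adapt the proof of the $\mathscr{F}$-Hypercyclicity Criterion of \cite[Theorem 3]{BesMenetPerisPuig16}, exhibiting an $\mathscr{F}_{\Gamma^{-1}}$-hypercyclic vector of the form
$$x := \sum_{j=1}^{\infty} \sum_{n \in A_{k_j}} \gamma_n\, S_n y_j,$$
where $(y_j)_{j \geq 1}$ is an enumeration of a countable dense subset of $\mathcal{D}$ in which every element is repeated infinitely often, and the indices $k_1 < k_2 < \cdots$ are chosen by induction so that a single index $k_j$ suffices to witness the approximation of the target $y_j$.

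I would build the $k_j$'s so that, at step $j$, the index $k_j$ simultaneously realises: (a) the first bullet of \ref{item-2} for $y_j$ with tolerance $\varepsilon_j := 1/2^j$; (b) $A_{k_j} \cap [0, N_j) = \emptyset$, where $N_j$ is given by \ref{item-1} for $y_j$ and tolerance $\varepsilon_j$ --- this is possible because the $A_k$'s are pairwise disjoint, so only finitely many of them meet $[0, N_j)$; (c) $\sup_{q \in A_{k_j}} \|T^q S_q y_j - y_j\| < \varepsilon_j$, by \ref{item-3}; and (d) $k_j \geq \ell_0^{(j')}(2^{-(j+j')})$ for every $j' < j$, where $\ell_0^{(j')}(\delta)$ is the integer produced by the second bullet of \ref{item-2} applied to $y_{j'}$, $k_{j'}$, and tolerance $\delta$. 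Each constraint is a lower bound on $k_j$, so they can all be met by invoking the first bullet of \ref{item-2} with $k_0$ taken above every required lower bound. Combining (b) with \ref{item-1} yields $\|\sum_{n \in A_{k_j}} \gamma_n S_n y_j\| \leq \varepsilon_j$, so $\sum_j \varepsilon_j < \infty$ guarantees that $x$ is well-defined in $X$.

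The verification is by decomposition: for $j \geq 1$ and $q \in A_{k_j}$,
$$\gamma_q^{-1} T^q x = T^q S_q y_j + \sum_{n \in A_{k_j},\, n \neq q} \frac{\gamma_n}{\gamma_q} T^q S_n y_j + \sum_{j' \neq j}\ \sum_{n \in A_{k_{j'}}} \frac{\gamma_n}{\gamma_q} T^q S_n y_{j'}.$$
The first summand is within $\varepsilon_j$ of $y_j$ by (c). The self-sum is bounded by $\varepsilon_j$ via the first bullet of \ref{item-2} at $y_j$ (applied to every finite $F \subset A_{k_j}\setminus\{q\}$, for which $q \notin F$) and by passage to the limit using the unconditional convergence supplied by \ref{item-1}. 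For $j' > j$, the first bullet of \ref{item-2} at $y_{j'}$ applies uniformly in $q \in A_{k_j}$ (automatically disjoint from any $F \subset A_{k_{j'}}$), yielding a bound of $\varepsilon_{j'}$; the geometric sum over $j' > j$ is at most $\varepsilon_j$. For $j' < j$, the second bullet of \ref{item-2} at $y_{j'}$ yields a bound of $2^{-(j+j')}$, valid because (d) places $q \in A_{k_j}$ inside $\bigcup_{\ell \geq \ell_0^{(j')}} A_\ell$; summing in $j'$ again gives at most $\varepsilon_j$. Altogether $\|\gamma_q^{-1} T^q x - y_j\| = O(\varepsilon_j)$ uniformly over $q \in A_{k_j}$.

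To finish, given a non-empty open $U \subset X$, choose $y_0$ in the countable dense subset with $y_0 \in U$ and $\eta > 0$ with $B(y_0, \eta) \subset U$; since $y_0$ appears infinitely often as a $y_j$, we can pick $j$ with $y_j = y_0$ and $O(\varepsilon_j) < \eta$, whence
$$A_{k_j} \subset \{q \in \mathbb{N}_0 : \gamma_q^{-1} T^q x \in U\} \subset \mathcal{N}_T(\Gamma^{-1} x, U).$$
Hereditary upwardness of $\mathscr{F}$ together with $A_{k_j} \in \mathscr{F}$ then gives $\mathcal{N}_T(\Gamma^{-1} x, U) \in \mathscr{F}$. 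The main obstacle is orchestrating (a)--(d) coherently: the first bullet of \ref{item-2} is uniform in $q$ and therefore absorbs the ``future'' cross-terms automatically, while the second bullet requires $q$ to live in high-index $A_\ell$'s and so must be prepared in advance via (d) to absorb the ``past'' cross-terms.
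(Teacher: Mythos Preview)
Your proof is correct and follows essentially the same approach as the paper's: both construct the hypercyclic vector as $x=\sum_j \sum_{n \in A_{k_j}} \gamma_n S_n y_j$ with indices $k_j$ chosen inductively so that conditions (I)--(III) yield tail, diagonal, and cross-term bounds, and then verify the decomposition of $\gamma_q^{-1} T^q x - y_j$ piece by piece. One minor bookkeeping point to add: you should also require $k_j \geq k_0(y_j)$ (the threshold from \ref{item-1}), since the uniform unconditional convergence estimate used in (b) is only asserted for $k$ above that threshold.
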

\begin{proof}
	Since $X$ is separable, there exists a set $\{y_\ell : \ell \in \mathbb N\} \subset \mathcal{D}$ which is dense in $X$. Up to a subsequence, we can assume without loss of generality that $A_k \subset [k,\infty[$ for each $k \in \mathbb N$. Combining assumptions \ref{item-1}, \ref{item-2} and \ref{item-3}, we can recursively establish the existence of an increasing sequence ${(k_\ell)}_{\ell \geq 1}$ of positive integers such that for any $\ell \geq 1$, the following holds:
	\begin{align}
		&\biggl\|\sum_{n \in F \cap A_{k_j}} \gamma_n \, S_n y_j\biggl\| \leq \dfrac{1}{\ell \, 2^\ell} \ \ \mbox{if $F \subset [k_\ell,\infty[$ is finite and $j \leq \ell$}, \label{aux-equation-15}\\
		&\biggl\| \sum_{n \in F \setminus \{q\}} \dfrac{\gamma_n}{\gamma_q} \, T^q S_n y_\ell\biggl\| \leq \dfrac{1}{2^\ell} \ \ \mbox{if $F \subset A_{k_\ell}$ is finite and $q \in {\textstyle\bigcup\limits_{i \geq 1}} A_i$}, \label{aux-equation-16}\\
		&\biggl\| \sum_{n \in F} \dfrac{\gamma_n}{\gamma_q} \, T^q S_n y_j\biggl\| \leq \dfrac{1}{\ell \, 2^{\ell}} \ \ \mbox{if $j < \ell$, $F \subset A_{k_j}$ is finite, and $q \in A_{k_\ell}$},\label{aux-equation-17}\\
		&\|T^q S_q y_\ell - y_\ell\| \leq \dfrac{1}{2^\ell} \ \ \mbox{if $q \in A_{k_\ell}$}. \label{aux-equation-18}
	\end{align}
More precisely, condition \ref{item-1} implies \eqref{aux-equation-15}, condition \ref{item-3} confirms the validity of \eqref{aux-equation-18} and condition \ref{item-2} guarantees that  \eqref{aux-equation-16} and \eqref{aux-equation-17} hold.

From \eqref{aux-equation-15} and the fact that $A_k \subset [k, \infty[$, we get
	\begin{eqnarray} \label{aux-equation-19}
		\biggl\|\sum_{n \in F \cap A_{k_j}} \gamma_n \, S_n y_j\biggl\| \leq \dfrac{1}{j \, 2^j} \ \ \mbox{if $F \subset \mathbb N_0$ is finite and $j \geq 1$}.
	\end{eqnarray}
For any $n \in A := \bigcup\limits_{\ell \geq 1} A_{k_\ell}$, let us define $z_n = y_\ell$ if $n \in A_{k_\ell}$. Note that, for any $\ell \geq 1$, and any finite set $F \subset [k_\ell, \infty[$,
	\begin{align} \label{aux-equation-20}
		\begin{split}
			\biggl\|\sum_{n \in F \cap A} \gamma_n \, S_n  z_n\biggl\| \leq \sum_{j=1}^\infty \biggl\| \sum_{n \in F \cap A_{k_j}} \gamma_n \, S_n y_j\biggl\|  \stackrel{\eqref{aux-equation-15}+\eqref{aux-equation-19}}{\leq} \sum_{j=1}^\ell \dfrac{1}{\ell \, 2^\ell} + \sum_{j=\ell+1}^\infty \dfrac{1}{j \, 2^j} \leq \dfrac{1}{2^{\ell-1}}.
		\end{split}
	\end{align}
As $k_\ell \to +\infty$ and $1/2^{\ell-1} \to 0$ as $\ell \to +\infty$, based on \eqref{aux-equation-20}, the series
	$$x = \sum_{n \in A} \gamma_n \, S_n z_n$$
	converges unconditionally. Therefore, the vector $x \in X$ is well-defined.

	To conclude the proof, it suffices to show that $x$ is a $\mathscr{F}_{\Gamma^{-1}}$-hypercyclic vector. For this purpose, consider an arbitrary $\ell \geq 1$. For $q \in A_{k_\ell}$, we have
	\begin{align} \label{aux-equation-21}
		\gamma_q^{-1} \, T^q x - y_\ell = \sum_{\substack{n \in A \\ n < q}} \dfrac{\gamma_n}{\gamma_q} \, T^q S_n z_n + \sum_{\substack{n \in A \\ n > q}} \dfrac{\gamma_n}{\gamma_q} \, T^q S_n z_n + T^q S_q y_\ell - y_\ell.
	\end{align}
Concerning the second term on the right-hand side of the equation above, for $m > q$ we have
	\begin{align*}
		%\small
\begin{split}
			\Biggl\|\sum_{\substack{n \in A \\ q < n \leq m}} \dfrac{\gamma_n}{\gamma_q} \, T^q S_n z_n\Biggl\| \leq \sum_{j=1}^\infty \Biggl\|\sum_{\substack{n \in A_{k_j} \\ q < n \leq m}} \dfrac{\gamma_n}{\gamma_q} \, T^q S_n y_j\Biggl\| \stackrel{\eqref{aux-equation-16}+\eqref{aux-equation-17}}{\leq} \sum_{j=1}^{\ell-1} \dfrac{1}{\ell \, 2^\ell} + \sum_{j=\ell}^\infty \dfrac{1}{2^j} \leq \dfrac{1}{2^{\ell-2}}.
		\end{split}
	\end{align*}
Therefore,	\begin{align} \label{aux-equation-23}
		\begin{split}
			\Biggl\| \sum_{\substack{n \in A \\ n > q}} \dfrac{\gamma_n}{\gamma_q} \, T^q S_n z_n\Biggl\| \leq \dfrac{1}{2^{\ell-2}}.
		\end{split}
	\end{align}
Likewise, the first term on the right-hand side of \eqref{aux-equation-21} satisfies
	\begin{align} \label{aux-equation-22}
		\small \begin{split}
			\Biggl\|\sum_{\substack{n \in A \\ n < q}} \dfrac{\gamma_n}{\gamma_q} \, T^q S_n z_n\Biggl\| < \dfrac{1}{2^{\ell-2}}.
		\end{split}
	\end{align}
Combining \eqref{aux-equation-18}, \eqref{aux-equation-23}, and \eqref{aux-equation-22}, we deduce that
	\begin{align*}
		\begin{split}
			\|\gamma_q^{-1} \, T^q x - y_\ell\| \leq \dfrac{2}{2^{\ell-2}} + \dfrac{1}{2^{\ell}}.
		\end{split}
	\end{align*}
Since $\{y_{\ell} : \ell \in \mathbb N\}$ is dense in $X$ and $A_{k_\ell} \in \mathscr{F}$, the proof is concluded.
\end{proof}

Inspired by \cite[Definition 1]{BesMenetPerisPuig16} we introduce the notion of hypercyclicity set. %\textcolor{blue}{\sout{in $\mathscr{P}(\mathbb N_0)$}.}

\begin{definition} \label{def-hypercyclicity-set}
	A family $\mathscr{F} \subset \mathscr{P}(\mathbb N_0)$ is termed a {\it hypercyclicity set} if it satisfies the following two conditions:
	\begin{enumerate}[label=(\roman*)]
		\item \label{item-1i-def-hyp-set} $\mathscr{F}$ is a Furstenberg family.
		\item \label{item-ii-def-hyp-set} There exists a sequence ${(A_k)}_{k\geq1}$ of pairwise disjoint sets in $\mathscr{F}$ such that for any $j \in A_k$ and $j' \in A_{k'}$ with $j \neq j'$, we have $|j-j'| \geq \max\{k,k'\}$.
	\end{enumerate}
\end{definition}

\begin{remark} \label{rmk-hp-set}
According to \cite[Proposition 1]{BesMenetPerisPuig16},  condition $\ref{item-ii-def-hyp-set}$ of Definition \ref{def-hypercyclicity-set} is a prerequisite for an operator $T$ to be $\mathscr{F}$-hypercyclic in the context of Banach spaces. This indirectly implies that the Furstenberg families $\underline{\mathscr{D}}, \overline{\mathscr{D}}, \overline{\mathscr{B}}$ and $\mathscr{A}_\infty$ are hypercyclicity sets, given it is well-known that there exist Banach spaces supporting frequently hypercyclic operators.
\end{remark}

Now we write the criterion exactly as it will be applied in Sections \ref{Sc-4} and \ref{Sc-5}.

\begin{corollary} \label{thm-criterion}
Let $X$ be a Banach space, let $T \in \mathscr{L}(X)$ and  let ${(\gamma_n)}_{n \geq 0} \subset \mathbb{C}^*$ be given, and let $\mathscr{F}$ be a  hypercyclicity set. If there exist a dense subset $\mathcal{D}$ of $X$ and a map $S \colon \mathcal{D} \to \mathcal{D}$ such that, for each $y \in \mathcal{D}$,
	\begin{enumerate}[label=(\roman*)]
		\item \label{item-0} {{$\sum\limits_{n=0}^{\infty} \left\|\gamma_n \, S^ny\right\| < \infty$,}}
		\item \label{item-i} {$\sup\limits_{r \geq m} \left(\sum\limits_{n=0}^{r-m} \left\|\dfrac{\gamma_n}{\gamma_r} \ T^{r-n}y\right\|\right) + \sup\limits_{r \geq 1}\left(\sum\limits_{n = m + r}^\infty \left\| \dfrac{\gamma_n}{\gamma_r} \, S^{n-r}y\right\|\right) \xrightarrow{m \to \infty} 0$},
		\item \label{item-ii}$T S y = y$,
	\end{enumerate}
	then $T$ is $\mathscr{F}_{\Gamma^{-1}}$-hypercyclic, where $\Gamma = \{\gamma_n : n \in \mathbb N_0\}$.
\end{corollary}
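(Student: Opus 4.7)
The plan is to deduce this corollary directly from the $\mathscr{F}_\Gamma$-Hypercyclicity Criterion (Theorem \ref{hypercyclicity-criterion}) by choosing the maps $S_n := S^n$ and taking the pairwise disjoint sets $A_k \in \mathscr{F}$ supplied by the hypercyclicity-set property (Definition \ref{def-hypercyclicity-set}(ii)); exactly as in the proof of Theorem \ref{hypercyclicity-criterion} I will, up to a subsequence, assume $A_k \subset [k, \infty[$ for every $k \geq 1$. The algebraic backbone is a single observation. Iterating the relation $TSy = y$ and using $S(\mathcal{D}) \subset \mathcal{D}$ yields $T^q S^q y = y$ for all $q \geq 0$ and every $y \in \mathcal{D}$; in particular, whenever $q \neq n$,
\[
T^q S^n y \;=\; \begin{cases} T^{q-n} y, & q > n, \\ S^{n-q} y, & q < n, \end{cases}
\]
which is precisely the reduction that will turn the abstract sums in condition (II) of Theorem \ref{hypercyclicity-criterion} into the explicit sums that hypothesis (ii) of the corollary controls. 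As a bonus, $T^q S^q y = y$ immediately disposes of condition (III).

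Condition (I) will follow from hypothesis (i) by a tail comparison: given $\varepsilon > 0$, choose $N$ with $\sum_{n \geq N} \|\gamma_n S^n y\| < \varepsilon$, so that for every $k \geq 1$ and every finite $F \subset [N, \infty[$,
\[
\Bigl\| \sum_{n \in F} \mathbbm{1}_{A_k}(n)\, \gamma_n\, S^n y \Bigr\| \;\leq\; \sum_{n \in F \cap A_k} \|\gamma_n S^n y\| \;<\; \varepsilon,
\]
so the absolute convergence given by (i) upgrades to unconditional convergence uniform in $k$.

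The real work, which I expect to be the main obstacle, is condition (II). For the first bullet, given $y$, $k_0$, and $\varepsilon > 0$, I would use hypothesis (ii) to pick $m$ making the full expression in (ii) strictly less than $\varepsilon$ and set $k := \max\{k_0, m\}$. For any finite $F \subset A_k$ and any $q \in \bigcup_{\ell \geq 1} A_\ell \setminus F$ (say $q \in A_\ell$), I split $F = F_- \sqcup F_+$ according to $n < q$ or $n > q$ and apply the reduction above. The spacing in Definition \ref{def-hypercyclicity-set}(ii) gives $|q - n| \geq \max\{k, \ell\} \geq k \geq m$ for every $n \in F$, forcing $F_- \subset \{0, \ldots, q - m\}$ (with $q \geq m$ whenever $F_- \neq \emptyset$) and $F_+ \subset [q + m, \infty[$. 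Consequently,
\[
\Bigl\| \sum_{n \in F} \tfrac{\gamma_n}{\gamma_q}\, T^q S^n y \Bigr\| \;\leq\; \sum_{n=0}^{q-m} \Bigl\| \tfrac{\gamma_n}{\gamma_q}\, T^{q-n} y \Bigr\| + \sum_{n = q + m}^{\infty} \Bigl\| \tfrac{\gamma_n}{\gamma_q}\, S^{n-q} y \Bigr\|,
\]
and taking $r := q \geq 1$ inside the two suprema of (ii) bounds this by $\varepsilon$. The second bullet follows by the same recipe: given $\delta > 0$, pick $m'$ for $\delta$ in (ii) and take $\ell_0 := \max\{m', 1\}$, so that $q \in A_\ell$ with $\ell \geq \ell_0$ automatically forces $|q - n| \geq \ell \geq m'$ and the previous calculation delivers a bound $\leq \delta$. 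The only subtlety is the careful book-keeping of the index ranges so the final bounds land inside the two explicit suprema of (ii); once that is under control, Theorem \ref{hypercyclicity-criterion} delivers the $\mathscr{F}_{\Gamma^{-1}}$-hypercyclicity of $T$.
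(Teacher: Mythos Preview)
Your argument is correct and follows essentially the same route as the paper: set $S_n := S^n$, take the $A_k$ from the hypercyclicity-set property (with $A_k \subset [k,\infty[$ after passing to a subsequence), dispose of (III) via $T^qS^q y = y$, get (I) from the tail of the absolutely convergent series in (i), and verify (II) by splitting $F$ into $n<q$ and $n>q$ and using the spacing $|q-n|\geq \max\{k,\ell\}$ to land inside the two suprema of (ii). Your handling of condition (I) (showing uniform unconditional convergence for \emph{all} $k\geq 1$, hence in particular for $k\geq k_0:=1$) is in fact slightly cleaner than the paper's version.
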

\begin{proof}
	To apply Theorem \ref{hypercyclicity-criterion}, we define $S_n \colon \mathcal{D} \to \mathcal{D}$ by $S_n:=S^n$ for $n \geq 0$. Given that $\mathscr{F} \subset \mathscr{P}(\mathbb N_0)$ is a hypercyclicity set, we can choose a collection of pairwise disjoint sets $\{A_k : k \in \mathbb N\}$  in $\mathscr{F}$ such that for any $j \in A_k$ and any $j' \in A_{k'}$ with $j \not= j'$, we have
	\begin{align} \label{aux-equation-27}
		|j - j'| \geq \max\{k,k'\}.
	\end{align}
Passing to a subsequence if necessary, we can assume, without loss of generality, that $A_k \subset [k, \infty[$ for each $k \in \mathbb N$.

We are in the position to	
	show that conditions \ref{item-1}, \ref{item-2}, and \ref{item-3} of Theorem \ref{hypercyclicity-criterion} hold. To do this, fix $y \in \mathcal{D}$. Condition \ref{item-3} follows easily from  assumption \ref{item-ii}. %\textcolor{blue}{\sout{Next, we will proceed to prove the validity of Conditions \ref{item-1} and \ref{item-2}.}}

	To establish condition \ref{item-1}, let $\varepsilon > 0$ be arbitrary. From \ref{item-0} we can deduce the existence of $k_0 \in \mathbb{N}$ such that, for every $k \geq k_0$,
	\begin{align} \label{aux-equation-25}
		\sum_{n = k_0}^{\infty} \|\gamma_n \, S_n y\| < \varepsilon.
	\end{align}
	Therefore, if $F \subset [k_0, \infty[$ is finite and $k \geq k_0$, then
	\begin{align}
		\begin{split}
			\Biggl\|\sum_{n \in F} \mathbbm{1}_{A_k}(n) \, \gamma_n \, S^n y\Biggl\| \leq \sum_{n \in F} \|\gamma_n \, S^n y \| \stackrel{\eqref{aux-equation-25}}{<} \varepsilon.
		\end{split}
	\end{align}	
This concludes the proof of condition (I).

Finally, to show that \ref{item-2} holds, let $k_0 \in \mathbb N$ and $\varepsilon > 0$ be given. It follows from assumption \ref{item-i} that there exists $m_0 \geq k_0$ such that
	\begin{align} \label{aux-equation-28}
		\begin{split}
			\sup_{r \geq m_0} \left(\sum_{n=0}^{r-m_0} \Biggl\|\dfrac{\gamma_n}{\gamma_r} \, T^{r-n} y\Biggl\|\right) + \sup_{r \geq 1} \left(\sum_{n = m_0+r}^{\infty} \Biggl\| \dfrac{\gamma_n}{\gamma_r} \, S^{n-r} y\Biggl\|\right) < \varepsilon.
		\end{split}
	\end{align}
	We define $k := m_0$. If $F \subset A_k$ is finite and $q \in \bigcup\limits_{\ell \geq 1}A_\ell \setminus F$, then
	\begin{align} \label{aux-equation-29}
		 \begin{split}
			\Biggl\|\sum_{ n \in F} \dfrac{\gamma_n}{\gamma_q} \, T^q \, S_n y\Biggl\| &\leq \sum_{ \substack{n \in F \\ n < q}} \Biggl\|\dfrac{\gamma_n}{\gamma_q} \, T^{q-n} y\Biggl\| + \sum_{\substack{ n \in F \\ n > q}} \Biggl\|\dfrac{\gamma_n}{\gamma_q} \, S^{n-q} y\Biggl\| \\ &\stackrel{(\star)}{\leq} \sum_{0 \leq n \leq q-m_0} \Biggl\|\dfrac{\gamma_n}{\gamma_q} \, T^{q-n} y\Biggl\| + \sum_{n \geq m_0+q} \Biggl\|\dfrac{\gamma_n}{\gamma_q} \, S^{n-q} y\Biggl\| \stackrel{\eqref{aux-equation-28}}{<} \varepsilon,
		\end{split}
	\end{align}
	where $(\star)$ comes from \eqref{aux-equation-27} because $n \in F \subset A_k$ and $q \in A_{\ell'} \setminus F$ for some $\ell' \in \mathbb N$ imply that $|q-n| \geq \max\{k,\ell'\}$. Furthermore, by selecting an arbitrary $\delta > 0$ and using assumption \ref{item-i}, there is $m_1 \geq 1$ such that  \eqref{aux-equation-28} holds with $m_1$ and $\delta$ instead of $m_0$ and $\varepsilon$, respectively. Setting $\ell_0 := m_1$, and reasoning similarly to \eqref{aux-equation-29}, we deduce that
	\begin{align}
		\begin{split} \label{aux-equation-30}
			\displaystyle\biggl\|\sum_{n \in F} \dfrac{\gamma_n}{\gamma_q} T^q S_n y\biggl\| \leq \delta \ \ \mbox{if $F \subset A_k$ is finite and $q \in \bigcup\limits_{\ell \geq \ell_0} A_\ell \setminus F$}.
		\end{split}
	\end{align}
Combining \eqref{aux-equation-29} and \eqref{aux-equation-30} we conclude that condition \ref{item-2} is satisfied, which completes the proof.
\end{proof}

\section{Applications of the $\mathscr{F}_\Gamma$-hypercyclicity criterion} \label{Sc-4}

\subsection{Unilateral pseudo-shift operators} \label{Sub-sec-4.1}

In this subsection, $Z$ is either $c_0(\mathbb N)$ or $\ell_p(\mathbb N)$ for some $1 \leq p < \infty$. Let $\{e_n : n \geq 1\}$ be the canonical basis for $Z$. For a bounded sequence of positive numbers ${\bf w} = {(w_n)}_{n\geq 1}$ and an increasing function $\varphi : \mathbb N \to \mathbb N$ with $\varphi(1) > 1$, the \textit{unilateral pseudo-shift} operator $T_{\varphi,{\bf w}}$ on $Z$ is defined by
\begin{align} \label{pseudo-shift}
	\begin{split}
		T_{\varphi,{\bf w}}\left(\sum_{n=1}^{\infty} x_n \, e_n\right) = \sum_{n=1}^\infty w_{\varphi(n)} \, x_{\varphi(n)} \, e_n.
	\end{split}
\end{align}
Such operators encompass, as particular cases, the standard backward weighted shift operators $B_{\bf v}$, ${\bf v} = (v_k)_{k \geq 1} \subset \mathbb{R}_{> 0}$, as well as their $k$-th powers. For instance, the operator $B_{\bf v}$ can be recovered by setting
	$$\varphi(n) = n + 1 \ \ \mbox{and} \ \ {\bf w} = (1,v_1,v_2, \ldots).$$
 Its $k$-th power $B_{{\bf v}}^k$ can be recovered by setting
 $$\varphi(n) = n+k \ \ \mbox{and} \ \ {\bf w} = \bigg(1,\stackrel{(k)}{\ldots}, 1, \prod_{i=1}^{k} v_i, \prod_{i=2}^{k+1} v_i, \prod_{i=3}^{k+2} v_i, \ldots\bigg).$$

A first application of Corollary \ref{thm-criterion} shall establish that every unilateral pseudo-shift operator on $Z$ is an $\mathscr{F}_\Gamma$-hypercyclic operator whenever $\mathscr{F}$ is a hypercyclicity set and $\Gamma$ is an unbouded set in $\mathbb C$. To prove this result we need the following lemma.%This result is presented as a theorem, and we will require the following  lemma to prove it.

\begin{lemma}
	There is a decreasing sequence ${(a_k)}_{k \geq 0}$ in $(0,1)$ such that
	\begin{align} \label{sequence-lemma}
		a_k^{-1} \cdot \sum_{n = k+1}^\infty a_n \xrightarrow{k \to \infty} 0.
	\end{align}
	In particular, the series $\sum\limits_{k=0}^\infty a_k $ converges.
\end{lemma}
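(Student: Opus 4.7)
The plan is to exhibit an explicit sequence that decays fast enough for the tail $\sum_{n>k} a_n$ to be negligible compared to $a_k$. The natural candidates are super-exponentially decreasing sequences: for example $a_k := 2^{-2^k}$, which lies in $(0,1)$ for $k \geq 0$ (with $a_0 = 1/2$) and is strictly decreasing since $2^k$ is. One could equally well use $a_k := e^{-(k+1)^2}$; I will proceed with the doubly-exponential choice since the estimates are cleanest.

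The key estimate is the tail bound $\sum_{n=k+1}^{\infty} a_n \leq 2 a_{k+1}$. Indeed, from $2^{n+1} = 2 \cdot 2^n \geq 2^n + 1$ one has $a_{n+1} = 2^{-2^{n+1}} \leq \tfrac{1}{2} a_n$ for every $n$, so the tail is bounded by a geometric series with ratio $1/2$ whose leading term is $a_{k+1}$, giving the claimed bound. Consequently
\[
a_k^{-1} \sum_{n=k+1}^{\infty} a_n \;\leq\; 2 \cdot \frac{a_{k+1}}{a_k} \;=\; 2 \cdot 2^{\,2^{k} - 2^{k+1}} \;=\; 2 \cdot 2^{-2^{k}} \;\xrightarrow{k \to \infty}\; 0,
\]
which is exactly \eqref{sequence-lemma}. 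The same computation $a_{k+1}/a_k \leq 1/2$ shows, by direct comparison with a geometric series, that $\sum_{k=0}^{\infty} a_k$ converges (it is dominated by $\sum_{k \geq 0} 2^{-k-1} = 1$), settling the ``in particular'' clause.

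There is really no hidden obstacle here; the whole point is to pick a sequence whose ratio $a_{k+1}/a_k$ tends to zero fast enough. The only mildly delicate step is making sure the tail can be controlled uniformly by its first term, which is where the super-exponential decay (and the resulting crude estimate $a_{n+1} \leq \tfrac{1}{2} a_n$) does all the work. Any sequence with $a_{k+1}/a_k \to 0$ and $a_k \in (0,1)$ decreasing would work just as well, so the proof could be written more abstractly if preferred; the explicit choice $a_k = 2^{-2^k}$ is simply the most transparent.
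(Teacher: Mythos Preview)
Your proof is correct. The explicit choice $a_k = 2^{-2^k}$ works exactly as you describe: the ratio $a_{n+1}/a_n = 2^{-2^n} \leq 1/2$ controls the tail geometrically, and the resulting bound $a_k^{-1}\sum_{n>k} a_n \leq 2\cdot 2^{-2^k}$ goes to zero.

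The paper takes a different, non-explicit route: it constructs the sequence inductively so that $0 < (1 - a_0 - \cdots - a_k)/a_k \leq 2^{-k}$ for every $k$, starting from $a_0 = 1/2$ and at each step choosing $a_{k+1}$ just below the remaining ``budget'' $s_k := 1 - a_0 - \cdots - a_k$. Since $\sum_{n>k} a_n \leq s_k$, the desired limit follows. Your approach is arguably more transparent and more elementary: a single explicit formula replaces an inductive construction, and the verification is a two-line estimate. The paper's argument, on the other hand, shows directly how to manufacture \emph{any} sequence with a prescribed rate of decay for the tail ratio (here $2^{-k}$), which is a slightly more flexible template even if it is not needed for the statement at hand. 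Either way, the lemma is elementary and both arguments are complete.
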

\begin{proof}
	It is enough to prove the existence of a decreasing sequence ${(a_k)}_{k \geq 0}$ in $\mathbb R_{>0}$ such that
	\begin{eqnarray} \label{suf-cod-seq-lem-2}
		0 < \dfrac{1-a_0 - a_1 - \cdots - a_k}{a_k} \leq 2^{-k} \mbox{~ for every } k \geq 0.
	\end{eqnarray}
	We will proceed by induction on $k$. Initially, setting $a_0 = 1/2$ we have that \eqref{suf-cod-seq-lem-2} holds for $k=0$. Now, assuming that $a_0 > a_1 > \cdots > a_k$ have been chosen satisfying \eqref{suf-cod-seq-lem-2}, let us select $a_{k+1}$ as a positive number less than and sufficiently close to $s_k := 1 - a_0 - \cdots - a_k > 0$. In this case,
	$$\dfrac{s_k - a_{k+1}}{a_{k+1}} \leq 2^{-(k+1)}.$$
	This selection is feasible because $\delta^{-1}(s_k-\delta) \to 0$ as $\delta \to s_k$. Note that with this choice, we have $a_{k+1} < a_k$ because
\begin{align*}
	a_{k+1} < s_k \stackrel{\eqref{suf-cod-seq-lem-2}}{\leq} 2^{-k} \, a_k \leq a_k.
\end{align*}
The induction process is complete, and the proof is concluded.
\end{proof}

\begin{theorem} \label{pps-Bw}
For any  hypercyclicity set $\mathscr{F}$ and any unbounded subset $\Gamma$ of $\mathbb{C}$, every unilateral pseudo-shift operator on Z is $\mathscr{F}_{\Gamma}$-hypercyclic.
\end{theorem}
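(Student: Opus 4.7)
The plan is to apply Corollary~\ref{thm-criterion}. Let $\mathcal{D} := \operatorname{span}\{e_n : n \geq 1\}$, which is dense in $Z$, and define $S : \mathcal{D} \to \mathcal{D}$ on the basis by $Se_n := w_{\varphi(n)}^{-1} e_{\varphi(n)}$, extended by linearity. A direct computation gives $T_{\varphi,{\bf w}} S e_n = e_n$, so $TSy = y$ for all $y \in \mathcal{D}$, which is condition~\ref{item-ii} of the corollary. Writing $\sigma_n^{(\mu)} := \prod_{i=1}^n w_{\varphi^i(\mu)}^{-1}$, one has $S^n e_\mu = \sigma_n^{(\mu)} e_{\varphi^n(\mu)}$. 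Since $\varphi$ is strictly increasing with $\varphi(1) > 1$, an induction yields $\varphi^k(\nu) \geq \nu + k$ for every $\nu$, so no $\nu$ satisfies $\varphi^k(\nu) = \mu$ once $k \geq \mu$; hence $T_{\varphi,{\bf w}}^k e_\mu = 0$ for $k \geq \mu$. By linearity, $T_{\varphi,{\bf w}}^k y = 0$ for $y \in \mathcal{D}$ whenever $k$ exceeds the largest index appearing in $y$; this makes the first supremum in condition~\ref{item-i} of the corollary vanish for all sufficiently large $m$.

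The heart of the proof is constructing a sequence $(\gamma_n)_{n \geq 0} \subset \mathbb{C}^*$ with $\gamma_n^{-1} \in \Gamma$ that verifies condition~\ref{item-0} and the second supremum in~\ref{item-i}. Set $C_n := (n+1)^2 \max_{1 \leq \mu \leq n} \sigma_n^{(\mu)}$ for $n \geq 1$ and $C_0 := 1$. Using that $\Gamma$ is unbounded, I would inductively pick $\lambda_n \in \Gamma$ satisfying
\[
|\lambda_n| \;\geq\; \max\!\Bigl(C_n,\; \max_{0 \leq r < n} C_{n-r}\,|\lambda_r|\Bigr),
\]
and set $\gamma_n := \lambda_n^{-1}$. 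The first bound gives $|\gamma_n|\,\sigma_n^{(\mu)} \leq (n+1)^{-2}$ for $n \geq \mu$, hence $\sum_n |\gamma_n|\,\sigma_n^{(\mu)} < \infty$, which is condition~\ref{item-0}. The second bound gives $|\gamma_{k+r}|/|\gamma_r| = |\lambda_r|/|\lambda_{k+r}| \leq 1/C_k$ for every $r \geq 0$ and $k \geq 1$, so the second supremum in~\ref{item-i} is bounded by
\[
\sup_{r \geq 1} \sum_{k \geq m} \frac{|\gamma_{k+r}|}{|\gamma_r|}\,\sigma_k^{(\mu)} \;\leq\; \sum_{k \geq \max(m,\mu)} \frac{1}{(k+1)^2} \;\xrightarrow{m \to \infty}\; 0,
\]
uniformly in $r$.

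Corollary~\ref{thm-criterion} then yields that $T_{\varphi,{\bf w}}$ is $\mathscr{F}_{\{\lambda_n\,:\,n\geq 0\}}$-hypercyclic. Since $\{\lambda_n : n \geq 0\} \subset \Gamma$ and enlarging the scalar set preserves $\mathscr{F}_\Gamma$-hypercyclicity (because $\mathscr{F}$ is hereditarily upward), $T_{\varphi,{\bf w}}$ is $\mathscr{F}_\Gamma$-hypercyclic, as required. The main obstacle is jointly balancing three demands on $(\gamma_n)$: sufficiently fast decay so that condition~\ref{item-0} holds at every basis vector; uniform-in-$r$ decay of the ratios $|\gamma_{k+r}|/|\gamma_r|$ in $k$, fast enough to dominate the growth of each $\sigma_k^{(\mu)}$; and the constraint that each $\gamma_n^{-1}$ lie in the possibly very sparse set $\Gamma$. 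The inductive construction above reconciles these via the unboundedness of $\Gamma$.
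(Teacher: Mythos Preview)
Your proof is correct and follows the same overall strategy as the paper---both apply Corollary~\ref{thm-criterion} with the natural right inverse $S$ on $\mathcal{D} = \operatorname{span}\{e_n\}$---but your construction of the scalar sequence $(\gamma_n)$ is genuinely simpler. The paper first proves an auxiliary lemma producing a decreasing sequence $(a_k) \subset (0,1)$ with $a_k^{-1}\sum_{n>k} a_n \to 0$, then introduces truncated weights $v_i = \min(w_i,1)$ and picks $\alpha_n \in \Gamma$ squeezed between expressions of the form $(a_{k_n} v_1^n \cdots v_{\Phi(n)}^n)^{-1}$ with $\Phi(n) = \varphi^n(n)$; verifying condition~\ref{item-i} then hinges on a cancellation argument showing that every factor $w_s^{-1}$ of modulus greater than $1$ is absorbed by a matching $v_s$. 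Your inductive choice $|\lambda_n| \geq \max\bigl(C_n,\ \max_{r<n} C_{n-r}|\lambda_r|\bigr)$ bypasses both the auxiliary lemma and the cancellation, delivering the uniform ratio bound $|\gamma_{k+r}|/|\gamma_r| \leq C_k^{-1}$ directly. The paper's route gives somewhat more explicit quantitative control on how $\gamma_n$ relates to the weights, which might be useful elsewhere, but for the theorem at hand your argument is more economical. Two minor expository points: your displayed bound for the second supremum tacitly assumes $m \geq \mu$ (for $k < \mu$ the quantity $\sigma_k^{(\mu)}/C_k$ need not be at most $(k+1)^{-2}$, though these finitely many terms disappear once $m \geq \mu$), and conditions~\ref{item-0} and~\ref{item-i} must be checked for every $y \in \mathcal{D}$, not only basis vectors---both are handled by the obvious triangle-inequality argument once $m$ exceeds the largest index occurring in $y$.
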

\begin{proof}
	Let $T_{\varphi,{\bf w}}$ be as in \eqref{pseudo-shift}. We aim to show that $T_{\varphi, {\bf w}}$ is $\mathscr{F}_\Gamma$-hypercyclic for every fixed unbounded set $\Gamma$ in $\mathbb C$. First, choose a sequence ${(a_k)}_{k \geq 0}$ satisfying \eqref{sequence-lemma}. Then, for each positive integer $i$, define
	\[
v_i  =\left\{
\begin{array}
[c]{c}%
1;\text{ if }w_i > 1\\
w_i;\text{ otherwise }
\end{array}
\right.  .
\]	
Using that $\Gamma$ is an unbounded subset of $\mathbb{C},$ we can select a sequence ${(\alpha_n)}_{n \geq 0}$ in $\Gamma$ and an increasing sequence ${(k_n)}_{n \geq 0}$ in $\mathbb{N}$ such that, for each $n \in \mathbb N_0$,
	\begin{align} \label{rel-betw-gamma-vs}
		\dfrac{1}{a_{k_n} v_1^n v_2^n \cdots v_{\Phi(n)}^{n}} < |\alpha_n| < \dfrac{1}{a_{k_{n+1}} v_1^{n+1} v_2^{n+1} \cdots v_{\Phi(n+1)}^{n+1}},
	\end{align}
	where $\Phi(n) := \varphi^{n}(n)$. To apply Corollary \ref{thm-criterion}, set $\mathcal{D} := \mbox{span}\{e_n : n \in \mathbb N\}$, $\gamma_n := \alpha_n^{-1}$ for every $n \geq 0$, and define $S \colon \mathcal{D} \to \mathcal{D}$ by putting  $S(e_n) = w_{\varphi(n)}^{-1} \cdot e_{\varphi(n)}$ and extending linearly to $\mathcal{D}$. To be consistent with the notation of Corollary \ref{thm-criterion}, we set $T := T_{\varphi,{\bf w}}$. Now we verify conditions $\ref{item-0}$, $\ref{item-i}$, and $\ref{item-ii}$ of Corollary \ref{thm-criterion}. Since condition $\ref{item-ii}$ is clearly satisfied, our attention turns to establishing conditions $\ref{item-0}$ and $\ref{item-i}$. Let us consider
$$y = \sum_{n=1}^N \beta_n \cdot e_n \in \mathcal{D},$$
where $N$ is a positive integer and $\beta_n \in \mathbb C$ for each $n=1,2,\ldots,N$. We remark that, for any $m \geq N$, $T^m y = 0$. Thus, for $r \geq m \geq N$, we have $T^{r-n}y = 0$ if $0 \leq n \leq r-m$. Hence,
	\begin{align}
		\begin{split} \label{cond-ii-eq1}
			\sup_{r \geq m} \left(\sum_{n=0}^{r-m} {\left\|\dfrac{\gamma_n}{\gamma_r} \, T^{r-n}y\right\|} \right)= 0 \ \ \mbox{if $m \geq N$}.
		\end{split}
	\end{align}
On the other hand, for $n > r + 1$,
	\begin{align}
		%\footnotesize	
\begin{split} \label{eq-aux-3}
			 {\left\|\dfrac{\gamma_n}{\gamma_r} \cdot S^{n-r}y\right\|} &=   {\left\|\dfrac{\alpha_n^{-1}}{\alpha_r^{-1}} \cdot S^{n-r}y\right\|} \stackrel{\eqref{rel-betw-gamma-vs}}{\leq}  {\left\|\dfrac{a_{k_n} v_1^{n} \cdots v_{\Phi(n)}^{n}}{a_{k_{r+1}} v_1^{r+1} \cdots v_{\Phi(r+1)}^{r+1}} \cdot S^{n-r}y\right\|} \\&=    {\left\|\dfrac{a_{k_n}}{a_{k_{r+1}}} \left(\prod_{i=1}^{\Phi(r+1)} v_i^{n-r-1}\right)\left(\prod_{j=\Phi(r+1)+1}^{\Phi(n)} v_j^{n}\right) \cdot S^{n-r}y\right\|} \\&=  {\left\|\sum_{\ell=1}^{N} \left[\dfrac{a_{k_n}}{a_{k_{r+1}}} \left(\prod_{i=1}^{\Phi(r+1)} v_i^{n-r-1}\right)\left(\prod_{j=\Phi(r+1)+1}^{\Phi(n)} v_j^{n}\right)\left(\prod_{q=1}^{n-r} w_{\varphi^q(\ell)}^{-1}\right) \right] \beta_{\ell} \, e_{\varphi^{n-r}(\ell)}\right\|} \\ &\leq \sum_{\ell=1}^{N} \dfrac{a_{k_n}}{a_{k_{r+1}}} \left(\prod_{i=1}^{\Phi(r+1)} v_i^{n-r-1}\right)\left(\prod_{j=\Phi(r+1)+1}^{\Phi(n)} v_j^{n}\right) \left(\prod_{q=1}^{n-r} w_{\varphi^q(\ell)}^{-1}\right)|\beta_{\ell}|.
		\end{split}	
	\end{align}
	Note that, by assuming $n \geq N$ and $0 \leq r < n$, we have
	\begin{align} \label{aux-equation-40}
	\Phi(n) = \varphi^{n}(n) \geq \varphi^{n-r}(n) \geq \varphi^{n-r}(N).
	\end{align}
Thus, when $n \geq N$ and $n > r + 1$, it holds
	\begin{align}
		\begin{split} \label{eq-aux-4}
			\left(\prod_{i=1}^{\Phi(r+1)} v_i^{n-r-1}\right)\left(\prod_{j=\Phi(r+1)+1}^{\Phi(n)} v_j^{n}\right) \left(\prod_{q=1}^{n-r} w_{\varphi^q(\ell)}^{-1}\right) \leq 1, \ \ \ell = 1, 2  \ldots, N,
		\end{split}
	\end{align}
because each $w_s^{-1}$ with modulus greater than 1 appears exactly once for $s \in \{\varphi(\ell), \ldots, \varphi^{n-r}(\ell)\}$, and it is canceled out by $v_s = w_s$, which appears at least once as a factor in the product due to \eqref{aux-equation-40}. It follows from \eqref{eq-aux-3} and \eqref{eq-aux-4} that, for any $m \geq N+1$,
	\begin{align}
		\begin{split} \label{des-auxiliar}
			\sum_{n=m+r}^\infty {\left\|\dfrac{\gamma_n}{\gamma_r} \cdot S^{n-r} y\right\|} \leq {\|y\|}_1 \cdot \sum_{n=m+r}^{\infty} \dfrac{a_{k_n}}{a_{k_{r+1}}},
		\end{split}
	\end{align}
where ${\|y\|}_1 := \sum\limits_{\ell=1}^N |\beta_\ell|$. Furthermore, for $m \geq 2$, we have
\begin{align} \label{eq-aux-extra}
	2+r \leq m+r \implies k_{r+1} + 1 \leq k_{r+2} \leq k_{m+r} \implies a_{k_{r+1}} \geq a_{k_{m+r} - 1}.
\end{align}
Therefore, for $m \geq N + 1$,  	
	\begin{align}
		%\small
\begin{split} \label{cond-ii-eq2}
			\sup_{r \geq 1}\left(\sum_{n = m+r}^\infty {\left\|\dfrac{\gamma_n}{\gamma_r} \cdot S^{n-r}y\right\|}\right)  &\stackrel{\eqref{des-auxiliar}}{\leq} {\|y\|}_1 \cdot \sup_{r \geq 1}  \left( a_{k_{r+1}}^{-1} \sum_{n=m+r}^\infty a_{k_n}\right) \\& \stackrel{\eqref{eq-aux-extra}}{\leq} {\|y\|}_1 \cdot \sup_{r \geq 1}\left( a_{k_{m+r}-1}^{-1} \sum_{n=k_{m+r}}^\infty a_n\right) \xrightarrow{m \to \infty} 0,
		\end{split}
	\end{align}
where the convergence comes from the fact that $k_s \geq s$ for each $s \in \mathbb N$ and from \eqref{sequence-lemma}. Combining \eqref{cond-ii-eq1} and \eqref{cond-ii-eq2}, it follows that condition $\ref{item-i}$ of Corollary \ref{thm-criterion} is satisfied.

In order to check condition $\ref{item-0}$,  observe that, for a fixed $m \in \mathbb N$,
	\begin{align}
		%\small
	\begin{split} \label{eq-aux-1}
			\sum_{n=m}^\infty {\|\gamma_n \cdot S^ny\|} &= \sum_{n=m}^\infty \|{\alpha_n^{-1} \cdot S^ny\|} \\ &\stackrel{\eqref{rel-betw-gamma-vs}}{\leq} \sum_{n=m}^\infty {\left\|\sum_{\ell = 1}^N \left[a_{k_n} \left(\prod_{j=1}^{\Phi(n)} v_j^n \right) \left(\prod_{q=1}^n w_{\varphi^q(\ell)}^{-1}\right)\right] \beta_{\ell} \, e_{\varphi^n(\ell)}\right\|} \\ &\leq \sum_{n=m}^\infty \left[\sum_{\ell = 1}^N a_{k_n} \left(\prod_{j=1}^{\Phi(n)} v_j^n \right) \left(\prod_{q=1}^n w_{\varphi^q(\ell)}^{-1}\right) |\beta_\ell| \right].
		\end{split}
	\end{align}
 From \eqref{aux-equation-40} it follows that $\Phi(n) \geq \varphi^n(N)$ if $n \geq N$. Applying an argument similar to the one we used to derive \eqref{eq-aux-4}, we can deduce the following inequality:
	\begin{align} \label{eq-aux-2}
		\left(\prod_{j=1}^{\Phi(n)} v_j^n\right) \left(\prod_{q=1}^n w_{\varphi^q(\ell)}^{-1}\right) \leq 1 \, \mbox{ for all } \, n \geq N \, \mbox{ and } \, \ell \in \{1, 2, \ldots, N\}.
	\end{align}
 Combining \eqref{sequence-lemma}, \eqref{eq-aux-1} and \eqref{eq-aux-2}, for $m \geq N$ it follows that
	\begin{align} \label{cond-i-eq2}
		\sum_{n=m}^\infty {\|\gamma_n \cdot S^ny\|} \leq {\|y\|}_1 \cdot \sum_{n=m}^\infty a_{k_n} \xrightarrow{m \to \infty} 0.
	\end{align}
	This establishes condition $\ref{item-0}$ of  Corollary \ref{thm-criterion} and completes the proof.
\end{proof}

As a direct consequence of Remark \ref{rmk-hp-set} and Theorem \ref{pps-Bw}, we obtain the following corollary.
	
\begin{corollary} \label{cor-Bw}
For any unbounded subset $\Gamma$ of $\mathbb{C}$, every unilateral pseudo-shift operator on $Z$ is frequently $\Gamma$-supercyclic, hence reiteratively and $\mathscr{U}$-frequently $\Gamma$-supercyclic.
\end{corollary}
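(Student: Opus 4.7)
The plan is to extract this corollary directly from Theorem \ref{pps-Bw} by instantiating it at the right Furstenberg family and then propagating the conclusion along the chain of inclusions among the relevant densities. First, I would note that, by Remark \ref{rmk-hp-set}, the family $\overline{\mathscr{D}}$ of subsets of $\mathbb{N}_0$ with positive lower asymptotic density is a hypercyclicity set in the sense of Definition \ref{def-hypercyclicity-set}. Consequently, Theorem \ref{pps-Bw} applied with $\mathscr{F} = \overline{\mathscr{D}}$ guarantees that, for every unbounded subset $\Gamma$ of $\mathbb{C}$, every unilateral pseudo-shift operator on $Z$ is $\overline{\mathscr{D}}_{\Gamma}$-hypercyclic, which is precisely the definition of being frequently $\Gamma$-supercyclic as given in Section \ref{Sc-1}.

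For the ``hence'' part, I would appeal to the elementary pointwise inequalities $\underline{d}(E) \leq \overline{d}(E) \leq \overline{b}(E)$ valid for every $E \subset \mathbb{N}_0$, which yield the inclusions of Furstenberg families $\overline{\mathscr{D}} \subset \underline{\mathscr{D}} \subset \overline{\mathscr{B}}$. If $T$ is $\overline{\mathscr{D}}_{\Gamma}$-hypercyclic with witness vector $x$, then for every non-empty open $U \subset Z$ the set $\mathcal{N}_T(\Gamma x, U)$ belongs to $\overline{\mathscr{D}}$, hence also to $\underline{\mathscr{D}}$ and to $\overline{\mathscr{B}}$; thus $x$ witnesses both the $\underline{\mathscr{D}}_{\Gamma}$-hypercyclicity and the $\overline{\mathscr{B}}_{\Gamma}$-hypercyclicity of $T$, i.e., $T$ is $\mathscr{U}$-frequently $\Gamma$-supercyclic and reiteratively $\Gamma$-supercyclic, as required.

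There is no real obstacle to overcome here: all the analytic content is already encoded in Theorem \ref{pps-Bw}, and the corollary is a straightforward repackaging for the three named densities. The only mild point to address in the write-up is to make sure that the $\Gamma$-version of the chain of inclusions \eqref{inclusions} is invoked correctly; this is immediate because the inclusions among Furstenberg families transfer verbatim to the corresponding classes of $\mathscr{F}_{\Gamma}$-hypercyclic operators through the common witness vector $x$.
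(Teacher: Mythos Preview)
Your argument is correct and matches the paper's own approach: the paper simply records the corollary as ``a direct consequence of Remark \ref{rmk-hp-set} and Theorem \ref{pps-Bw}'', and you have accurately unpacked this by applying Theorem \ref{pps-Bw} with $\mathscr{F}=\overline{\mathscr{D}}$ and then invoking the inclusions $\overline{\mathscr{D}}\subset\underline{\mathscr{D}}\subset\overline{\mathscr{B}}$ to obtain the remaining two conclusions.
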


\subsection{Existence of frequently supercyclic operators} \label{Sub-sec-4.2}

As a second application of Corollary \ref{thm-criterion}, we will now establish that, for any unbounded set $\Gamma \subset \mathbb C$ and any hypercyclicity set $\mathscr{F}$, every separable infinite-dimensional Banach space $X$ supports a $\mathscr{F}_{\Gamma}$-hypercyclic operator. To do so, we need the following folklore lemma:

\begin{lemma}[{\cite[Theorem 1]{Ovsepian-Pelcz}}] \label{lem-bior-sequ}
	For every separable Banach space $X$ there exists a biorthogonal sequence ${(x_n, x_n^*)}_{n \geq 1} \subset X \times X^*$ satisfying the following properties:
	\begin{enumerate}[label=(\roman*)]
		\item $\sup\limits_{n \in \mathbb N}{\|x_n^*\|}_{X^*} < \infty$  and  ${\|x_n\|}_X = 1$ for each $n$.
		\item ${\rm span}\{x_n : n \in \mathbb N\}$ is dense in $X$.
		\item $x = 0$ if $x_n^*(x) = 0$ for each $n$.			
	\end{enumerate}	
\end{lemma}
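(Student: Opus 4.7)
The statement is the classical Ovsepian--Pelczynski theorem on the existence of a bounded Markushevich basis. My plan is to construct the biorthogonal system in two stages: first produce an unnormalized Markushevich basis satisfying density and totality, and then perturb it to secure the uniform norm bound on the functionals.

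For the first stage, I would fix a dense sequence $(u_n) \subset X$ and, invoking Hahn--Banach together with the separability of $X$, a countable family $(v_n^*) \subset X^*$ which is total on $X$ (one can separate each $u_n$ from a suitably chosen closed hyperplane). An interleaving Gram--Schmidt-type construction that alternates between the $u_n$'s and the $v_n^*$'s, applying Hahn--Banach at each step to enforce biorthogonality with previously chosen elements, produces a biorthogonal sequence $(e_n, e_n^*)$ with $\overline{\operatorname{span}}\{e_n : n \in \mathbb{N}\} = X$ and $(e_n^*)$ total on $X$. This yields properties (ii) and (iii) but offers no control whatsoever on $\sup_n \|e_n^*\|_{X^*}$, which may a priori be infinite.

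The main obstacle is the second stage, the Ovsepian--Pelczynski perturbation. Starting from $(e_n, e_n^*)$, one replaces the system by a modification of the form
\begin{equation*}
x_n = e_n + \sum_{k < n} \alpha_{n,k}\, e_k, \qquad x_n^* = e_n^* - \sum_{k > n} \alpha_{k,n}\, e_k^*,
\end{equation*}
where the coefficients $\alpha_{n,k}$ are selected by a careful recursion. The triangular shape of the substitution (lower-triangular on vectors, upper-triangular on functionals) makes biorthogonality automatic once the functional tails converge, and density/totality are preserved because each finite initial section of the $x_n$'s spans the same subspace as the corresponding section of the $e_n$'s. The heart of the theorem is the inductive choice of the $\alpha_{k,n}$: using that $e_k^*$ vanishes on every $e_j$ with $j \neq k$, one subtracts from $e_n^*$ a tail $\sum_{k>n}\alpha_{k,n} e_k^*$ tailored to almost cancel the action of $e_n^*$ on a pre-chosen, increasingly rich finite test set, driving $\|x_n^*\|_{X^*}$ below a uniform constant $C$. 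Finally, normalizing via $(x_n,x_n^*) \mapsto (x_n/\|x_n\|_X,\|x_n\|_X\, x_n^*)$ secures (i); the identity $x_n^*(x_n)=1$ together with the pre-normalization bound on $\|x_n^*\|_{X^*}$ forces a two-sided control on $\|x_n\|_X$, so uniform boundedness of the functionals survives normalization. The delicate combinatorial-analytic bookkeeping in the recursion is precisely the content of Ovsepian--Pelczynski's paper, and is the step I would expect to be the hardest to carry out from scratch.
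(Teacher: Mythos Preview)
The paper does not supply a proof of this lemma at all: it is stated as a ``folklore lemma'' and attributed directly to \cite[Theorem~1]{Ovsepian-Pelcz}, with no argument given. So there is no proof in the paper to compare your proposal against; the authors simply quote the result and use it as a black box in the proof of Theorem~\ref{exis-freq-super}.

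As for the content of your sketch, the two-stage strategy you outline---first build an arbitrary Markushevich basis via an interleaved Gram--Schmidt/Hahn--Banach procedure, then perturb to enforce the uniform bound---is indeed the architecture of the Ovsepian--Pe\l czy\'nski argument. Your description of the second stage is, however, somewhat schematic: the actual mechanism in the original paper is a blocking construction rather than a single triangular change of basis of the shape you wrote down, and the delicate point is how to produce, within each block, a vector on which the functionals are simultaneously small. Your formulas for $x_n$ and $x_n^*$ would need justification that the upper-triangular functional tails converge and that the resulting $\|x_n^*\|$ are genuinely bounded; as written this is asserted rather than argued. That said, for the purposes of the present paper none of this matters, since the lemma is used only as a cited input.
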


\begin{theorem} \label{exis-freq-super}
	Let $\Gamma$ be an unbounded set in $\mathbb C$ and let $\mathscr{F}$ be a hypercyclicity set. Then, every separable infinite-dimensional Banach space supports a $\mathscr{F}_{\Gamma}$-hypercyclic operator.
\end{theorem}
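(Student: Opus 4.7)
The plan is to transport a weighted-shift-type dynamics to $X$ via the biorthogonal system from Lemma \ref{lem-bior-sequ}, and then invoke Corollary \ref{thm-criterion}, imitating the strategy used for Theorem \ref{pps-Bw}. First I would fix $(x_n, x_n^*)_{n \geq 1}$ as in Lemma \ref{lem-bior-sequ}, let $M := \sup_n \|x_n^*\|$, and define
\[
Tx := \sum_{n=1}^{\infty} 2^{-n}\, x_{n+1}^*(x)\, x_n, \qquad x \in X,
\]
which converges absolutely, so $T \in \mathscr{L}(X)$ with $\|T\| \leq M$. Setting $\mathcal{D} := \mbox{span}\{x_n : n \geq 1\}$ (dense in $X$) and defining $S : \mathcal{D} \to \mathcal{D}$ by linearly extending $S(x_n) := 2^n x_{n+1}$, one checks directly that $T(x_1) = 0$, $T(x_{n+1}) = 2^{-n} x_n$, so $TSy = y$ for every $y \in \mathcal{D}$ --- this is condition \ref{item-ii} of Corollary \ref{thm-criterion}. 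Iteration yields $S^n x_k = 2^{nk + n(n-1)/2}\, x_{k+n}$ and, for $n < k$, $T^n x_k = 2^{-nk + n(n+1)/2}\, x_{k-n}$ (zero otherwise); in particular $T^n y = 0$ as soon as $n$ is at least the largest index appearing in $y \in \mathcal{D}$.

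Next, using the decreasing sequence $(a_k)_{k \geq 0}$ from the lemma preceding Theorem \ref{pps-Bw} (which satisfies \eqref{sequence-lemma}), I would inductively pick an increasing sequence $(k_n) \subset \mathbb{N}$ and elements $\alpha_n \in \Gamma$ obeying a two-sided bracketing analogous to \eqref{rel-betw-gamma-vs}, for instance
\[
\frac{2^{2n^2}}{a_{k_n}} < |\alpha_n| < \frac{2^{2(n+1)^2}}{a_{k_{n+1}}}, \qquad n \geq 0.
\]
This is feasible: the unboundedness of $\Gamma$ supplies some $\alpha_n \in \Gamma$ above the lower bound, and $k_{n+1}$ is then chosen large enough that the upper bound exceeds $|\alpha_n|$, using $a_k \to 0$. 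Set $\gamma_n := \alpha_n^{-1}$, so that $\{\gamma_n : n \geq 0\}^{-1} = \{\alpha_n : n \geq 0\} \subset \Gamma$.

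The verification of conditions \ref{item-0} and \ref{item-i} of Corollary \ref{thm-criterion} then parallels the proof of Theorem \ref{pps-Bw}. For $y = \sum_{\ell=1}^{N} \beta_\ell x_\ell \in \mathcal{D}$ and $n \geq N$, the lower bound on $|\alpha_n|$ yields $\|\gamma_n S^n y\| \leq \|y\|_1\, a_{k_n}\, 2^{-n^2/2}$, so $\sum_n \|\gamma_n S^n y\| < \infty$, giving condition \ref{item-0}. For condition \ref{item-i}, the first supremum vanishes once $m \geq N$ because $T^{r-n} y = 0$ when $r - n \geq N$; the second is handled by combining the upper bound on $|\alpha_r|$ with the lower bound on $|\alpha_n|$ and the monotonicity of $(a_{k_n})$, producing (after the substitution $j = n - r$) a uniform tail of the form $\sum_{j \geq m} 2^{jN - 3j^2/2}$, which tends to $0$ as $m \to \infty$. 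Corollary \ref{thm-criterion} then yields that $T$ is $\mathscr{F}_{\{\alpha_n\}}$-hypercyclic, and since $\{\alpha_n\} \subset \Gamma$ and $\mathscr{F}$ is hereditarily upward, $T$ is $\mathscr{F}_{\Gamma}$-hypercyclic. The hard part will be calibrating the bracketing of $|\alpha_n|$: it must be loose enough to admit an element of $\Gamma$ inside each interval, yet tight enough that the ratios $|\alpha_r|/|\alpha_n|$ overpower the super-exponential growth of $\|S^{n-r} y\|$ appearing in condition \ref{item-i}; the inductive two-sided bracket, exactly as in Theorem \ref{pps-Bw}, supplies this balance.
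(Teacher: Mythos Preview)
Your proposal is correct and follows essentially the same approach as the paper: transport a weighted backward shift to $X$ via the biorthogonal system of Lemma~\ref{lem-bior-sequ}, then verify the hypotheses of Corollary~\ref{thm-criterion} by imitating the estimates in Theorem~\ref{pps-Bw}. The paper states this as a direct ``mutatis mutandis'' appeal to the proof of Theorem~\ref{pps-Bw} with a general summable weight sequence $\mathbf{u}$ and $\varphi(n)=n+1$, whereas you specialize to $u_n=2^{-n}$ and carry out the computations explicitly; the only cosmetic difference is that your quadratic bracketing $2^{2n^2}$ already forces the tail $2^{jN-3j^2/2}$ to decay on its own, so the special property~\eqref{sequence-lemma} of $(a_k)$ is not strictly needed in your version---any sequence decreasing to $0$ would suffice for the inductive construction of $(\alpha_n,k_n)$.
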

\begin{proof}
	Let $X$ be a separable infinite-dimensional Banach space and let ${(x_n, x_n^*)}_{n \geq 1} \subset X \times X^*$ be a biorthogonal sequence as in Lemma \ref{lem-bior-sequ}. Also, let ${\bf u} = {(u_n)}_{n \geq 1}$ be a summable sequence of positive numbers. Under these conditions, it was proved in \cite{AlvCos24} that the operator $T_{\bf u} : X \to X$ given by
	\begin{align} \label{oper-Tw}
		%\begin{split}
			T_{\bf u}(x) := \sum_{n=1}^\infty x_{n+1}^*(x) \, u_n \, x_n
	%	\end{split}
	\end{align}
	is a well-defined supercyclic operator.

	We aim to show that $T_{\bf u}$ is an $\mathscr{F}_\Gamma$-hypercyclic operator for every unbounded set $\Gamma$ in $\mathbb{C}$ and every Furstenberg hypercyclicity set $\mathscr{F}$. This proof follows, \textit{mutatis mutandis}, from the proof of Theorem \ref{pps-Bw} by setting ${\bf w} := (u_1,u_1,u_2,u_3, u_4, \ldots)$, $\varphi(n) := n + 1$, $T_{\varphi, {\bf w}} := T_{\bf u}$, and writting $x_n$ instead of $e_n$.
\end{proof}

As a consequence of Remark \ref{rmk-hp-set} and Theorem \ref{exis-freq-super}, we have the following corollary.

\begin{corollary}
	Every separable infinite-dimensional Banach space supports a frequently (hence, $\mathscr{U}$-frequently and reiteratively) $\Gamma$-supercyclic operator whenever $\Gamma$ is an unbounded subset of $\mathbb C$.
\end{corollary}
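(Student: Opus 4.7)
The plan is to derive the corollary directly from Theorem \ref{exis-freq-super} by specializing the Furstenberg family appropriately. Recalling the terminology introduced just before \eqref{inclusions}, an operator is frequently $\Gamma$-supercyclic precisely when it is $\overline{\mathscr{D}}_\Gamma$-hypercyclic, where $\overline{\mathscr{D}}$ is the family of subsets of $\mathbb N_0$ with positive lower asymptotic density. Remark \ref{rmk-hp-set} records that $\overline{\mathscr{D}}$ is a hypercyclicity set, so the hypotheses of Theorem \ref{exis-freq-super} are fulfilled with $\mathscr{F}=\overline{\mathscr{D}}$ and the given unbounded $\Gamma\subset\mathbb C$. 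I would therefore apply that theorem to obtain, on the arbitrary separable infinite-dimensional Banach space $X$, an operator $T\in\mathscr{L}(X)$ that is $\overline{\mathscr{D}}_\Gamma$-hypercyclic, i.e., frequently $\Gamma$-supercyclic.

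For the parenthetical ``hence'' clause, the plan is to invoke the chain of inclusions in \eqref{inclusions}. Although \eqref{inclusions} is explicitly displayed for the case $\Gamma=\mathbb C$, the underlying pointwise inequalities $\underline{d}(E)\leq \overline{d}(E)\leq \overline{b}(E)$ are valid for every $E\subset\mathbb N_0$, so the analogous containments between frequently $\Gamma$-supercyclic, $\mathscr{U}$-frequently $\Gamma$-supercyclic, and reiteratively $\Gamma$-supercyclic operators carry over verbatim for any fixed $\Gamma\subset\mathbb C$: indeed, for each fixed $x$ and each non-empty open $U$, the set $\mathcal{N}_T(\Gamma x,U)$ is the same regardless of which density condition we impose on it. Applied to the $T$ produced above, this immediately gives that $T$ is also $\mathscr{U}$-frequently $\Gamma$-supercyclic and reiteratively $\Gamma$-supercyclic.

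Since both ingredients — Theorem \ref{exis-freq-super} and the monotonicity of the three densities — are already at hand, there is no real obstacle: the corollary is essentially a two-line bookkeeping consequence of what has already been proved. The only point demanding a line of care is verifying that the ``respectively'' inclusions pass unchanged from $\Gamma=\mathbb C$ to a general $\Gamma$, but this is immediate from Definition \ref{def-hypset-and-Fhypope}(A), which shows that the relevant set $\mathcal{N}_T(\Gamma x,U)$ depends only on $\Gamma$, $x$, $T$, and $U$, not on the Furstenberg family used to measure it.
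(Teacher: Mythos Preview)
Your proposal is correct and follows exactly the approach the paper intends: the corollary is stated there simply as an immediate consequence of Remark \ref{rmk-hp-set} and Theorem \ref{exis-freq-super}, with the ``hence'' clause implicit from the density inequalities recorded just before \eqref{inclusions}. Your added remark that the inclusions in \eqref{inclusions} carry over to arbitrary $\Gamma$ because $\mathcal{N}_T(\Gamma x,U)$ does not depend on the Furstenberg family is a fair (and correct) elaboration of what the paper leaves unsaid.
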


\section{A necessary and a sufficient condition for a scalar set to be $\mathscr{F}$-hypercyclic} \label{Sc-5}

For a subset $\Gamma$ in $\mathbb C$, we define
\begin{align*}
	\mathcal{C}_0(\Gamma) &:= \{{(\alpha_k)}_{k \geq 0} \subset \Gamma : |\alpha_k| \searrow   0\}; \ \ \mbox{and} \\
	\mathcal{R}(\Gamma) &:= \Bigl\{{(\alpha_k)}_{k \geq 0} \subset \Gamma \setminus \{0\} : \displaystyle\lim_{k \to \infty} \frac{|\alpha_{k+1}|}{|\alpha_k|} = 0 \Bigl\}.
\end{align*}
A collection $\mathscr{F} \subset \mathscr{P}(\mathbb N_0)$ is said to be \textit{right-invariant} if, for any $E \in \mathscr{P}(\mathbb N_0)$, $E \in \mathscr{F}$ if and only if $p + E \in \mathscr{F}$ for every $p \in \mathbb N_0$.

The main purpose of this section is to prove the following result:

\begin{theorem} \label{thm-carac-freq-scal-set}
	Let $\Gamma$ be a subset of $\mathbb{C}$ and let $\mathscr{F}$ be a hypercyclicity set. Consider the following conditions:
	\begin{enumerate}[label=(\roman*)]
		\item \label{item-thm-i} $\Gamma \setminus \{0\}$ is non-empty, bounded, and bounded away from zero.
		\item \label{item-thm-ii} $\Gamma$ is an $\mathscr{F}$-hypercyclic scalar set.
		\item \label{item-thm-iii} $\Gamma \setminus \{0\}$ is non-empty, bounded, and $\mathcal{C}_0(\Gamma) \subset \mathcal{R}(\Gamma)$.
	\end{enumerate}
Then, the implication $\ref{item-thm-ii} \Rightarrow \ref{item-thm-iii}$ always holds. Furthermore, if $\mathscr{F}$ is right-invariant and $\mathscr{P}(\mathbb{N}_0) \setminus \mathscr{F}$ is closed under finite unions, then \mbox{$\ref{item-thm-i} \Rightarrow \ref{item-thm-ii}$} also holds.
\end{theorem}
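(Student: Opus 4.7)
The implication $(\mathrm{ii})\Rightarrow(\mathrm{iii})$ will be established by contrapositive, exhibiting operators that witness the failure of the scalar-set property. Non-emptiness of $\Gamma\setminus\{0\}$ is immediate: if $\Gamma\subseteq\{0\}$, then $\Gamma\cdot\mbox{Orb}(y,S)\subseteq\{0\}$ so no operator admits an $\mathscr{F}_\Gamma$-hypercyclic vector, whereas Remark \ref{rmk-hp-set} ensures the existence of $\mathscr{F}$-hypercyclic operators, a contradiction. For boundedness, if $\Gamma$ is unbounded then Theorem \ref{pps-Bw} makes every unilateral pseudo-shift on $\ell_p(\mathbb{N})$ an $\mathscr{F}_\Gamma$-hypercyclic operator; picking weights so that $\|T^n\|\to 0$ (e.g.\ a summable weight sequence) produces a non-hypercyclic pseudo-shift, hence not $\mathscr{F}$-hypercyclic, again contradicting the scalar-set property. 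Finally, for $\mathcal{C}_0(\Gamma)\subseteq\mathcal{R}(\Gamma)$, from a putative $(\alpha_k)\in\mathcal{C}_0(\Gamma)\setminus\mathcal{R}(\Gamma)$ I plan to construct (in the spirit of Theorem \ref{pps-Bw}) a weighted-shift-type operator whose $n$-th iterates have norms proportional to $1/|\alpha_n|$ along a subsequence, so that scaling the orbit by $(\alpha_k)$ meets the $\mathscr{F}_\Gamma$-hypercyclicity criterion of Section \ref{Sc-3} (exploiting the slow decay $\limsup_{k\to\infty}|\alpha_{k+1}|/|\alpha_k|>0$) while the unscaled orbit escapes to infinity and fails to be $\mathscr{F}$-hypercyclic.

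For $(\mathrm{i})\Rightarrow(\mathrm{ii})$, the direction \emph{$\mathscr{F}_{\{1\}}$-hypercyclic implies $\mathscr{F}_\Gamma$-hypercyclic} is immediate from upward heredity: any fixed $\gamma_0\in\Gamma\setminus\{0\}$ yields $\mathcal{N}_S(y,\gamma_0^{-1}U)\subseteq\mathcal{N}_S(\Gamma y,U)$. For the reverse, assume $y$ is $\mathscr{F}_\Gamma$-hypercyclic for $S$; write $\Gamma\setminus\{0\}\subseteq\{z\in\mathbb{C}:m\leq|z|\leq M\}$, and fix $y_0\neq 0$ in $X$ and small $\epsilon>0$. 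Covering the compact set $\{\lambda y_0:1/M\leq|\lambda|\leq 1/m\}$ by finitely many balls $B(\lambda_j y_0,\epsilon/m)$, $j=1,\dots,k$, and using $\mathcal{N}_S(\Gamma y,U)\subseteq\bigcup_{\gamma\in\Gamma\setminus\{0\}}\mathcal{N}_S(y,\gamma^{-1}U)$, one obtains
\[
\mathcal{N}_S(\Gamma y,B(y_0,\epsilon))\ \subseteq\ \bigcup_{j=1}^{k}\mathcal{N}_S(y,B(\lambda_j y_0,2\epsilon/m)).
\]
The left side lies in $\mathscr{F}$; since $\mathscr{P}(\mathbb{N}_0)\setminus\mathscr{F}$ is closed under finite unions, some $\mathcal{N}_S(y,B(\lambda_{j_0}y_0,2\epsilon/m))$ lies in $\mathscr{F}$. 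Letting $\epsilon\to 0$ along a sequence and extracting a convergent subsequence of the corresponding $\lambda_{j_0}$'s in the compact annulus yields $\lambda^*=\lambda^*(y_0)$ with $\mathcal{N}_S(y,B(\lambda^*y_0,\delta))\in\mathscr{F}$ for every $\delta>0$; that is, $\lambda^*y_0$ lies in the closed set $H:=\{z\in X:\mathcal{N}_S(y,B(z,\delta))\in\mathscr{F}\text{ for all }\delta>0\}$. Right-invariance of $\mathscr{F}$, together with the inclusion $p+\mathcal{N}_S(y,B(z,r))\subseteq\mathcal{N}_S(y,B(S^pz,\|S^p\|\,r))$ and taking $r\to 0$, gives $S^pH\subseteq H$ for every $p\geq 0$. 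Specializing to $y_0=y$ produces $\lambda^*(y)\,\mbox{Orb}(y,S)\subseteq H$; but $y$ is $\Gamma$-supercyclic by Lemma \ref{newlemma}, and since $\Gamma$ satisfies $(\mathrm{i})$, Theorem A of Charpentier--Ernst--Menet ensures $y$ is hypercyclic, so $\mbox{Orb}(y,S)$ is dense in $X$. Thus $\lambda^*(y)\,\mbox{Orb}(y,S)$ is a dense subset of $H$, forcing $H=X$ by closedness, and hence $y$ is $\mathscr{F}$-hypercyclic.

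The main obstacle will be the construction witnessing necessity of $\mathcal{C}_0(\Gamma)\subseteq\mathcal{R}(\Gamma)$, where the weights must be calibrated delicately against $(\alpha_k)$ so that the $\mathscr{F}_\Gamma$-hypercyclicity criterion becomes applicable (the slow decay $\limsup|\alpha_{k+1}|/|\alpha_k|>0$ being precisely what makes this feasible), while simultaneously ensuring that the unscaled orbit is not $\mathscr{F}$-dense. The step $S^pH\subseteq H$ in the $(\mathrm{i})\Rightarrow(\mathrm{ii})$ argument also deserves care: this is where the right-invariance hypothesis on $\mathscr{F}$ enters, in conjunction with the continuity estimate $\|S^p\|\cdot r\to 0$.
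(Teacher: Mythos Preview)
Your argument for $(\mathrm{i})\Rightarrow(\mathrm{ii})$ is correct and follows a genuinely different route from the paper. The paper proceeds by iterated shrinking: a covering of $\Gamma$ by sets of small diameter (Lemma~\ref{lemma-hypset-2}, which in turn rests on Sublemma~\ref{sublemma-hypset-2} and Subsublemma~\ref{somewhere-implies-Gamma-super}) is applied repeatedly to produce a nested chain $\Gamma_1\supset\Gamma_2\supset\cdots$ with $\mathrm{diam}(\Gamma_k)\to 0$ for which $x_0$ remains $\mathscr{F}_{\Gamma_k}$-hypercyclic, and then Cantor's intersection theorem (Lemma~\ref{lemma-hypset-1}) pins down a single scalar. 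You instead perform one covering step at each scale, extract a limit $\lambda^*$ by compactness of the annulus, and exploit the $S$-invariance of the closed set $H=\{z:\mathcal N_S(y,B(z,\delta))\in\mathscr{F}\text{ for all }\delta>0\}$ together with the hypercyclicity of $y$ (via Lemma~\ref{newlemma} and \cite[Theorem~A]{CharErnMenet16}) to force $H=X$. Both approaches use right-invariance and the finite-union hypothesis on $\mathscr{P}(\mathbb N_0)\setminus\mathscr{F}$ at the same junctures; yours trades the nested-sequence bookkeeping for a clean invariant-set argument.

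Your plan for $(\mathrm{ii})\Rightarrow(\mathrm{iii})$ handles non-emptiness and boundedness correctly (the paper uses the unweighted backward shift rather than a summable-weight one, but either example works). The sketch for $\mathcal C_0(\Gamma)\subset\mathcal R(\Gamma)$, however, has two real gaps. First, from $(\alpha_k)\in\mathcal C_0(\Gamma)\setminus\mathcal R(\Gamma)$ you only obtain $\limsup_k|\alpha_{k+1}|/|\alpha_k|>0$, a \emph{one-sided} control; building a \emph{bounded} weighted shift whose iterates scale like $1/|\alpha_n|$ also requires the ratios $|\alpha_k|/|\alpha_{k+1}|$ to be bounded above. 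The paper supplies this via a regularization step (Lemma~\ref{lemma-exponential-conver}): pass to a subsequence along which $\delta^2\le|\alpha_{k+1}|/|\alpha_k|\le\delta$ for some fixed $\delta\in(0,1)$. Second, ``the unscaled orbit escapes to infinity'' is not a valid mechanism for non-$\mathscr{F}$-hypercyclicity here: a \emph{unilateral} weighted backward shift with $\prod_{i=1}^n w_i\to\infty$ is hypercyclic by Salas' theorem, and orbits of finitely supported vectors under it are eventually zero, so no unilateral construction in the spirit of Theorem~\ref{pps-Bw} can serve as the counterexample. The paper instead takes a \emph{bilateral} shift on $\ell_2(\mathbb Z)$ with $w_k=\eta>1$ for $k>0$ and $w_k=1$ for $k\le 0$, choosing $\eta$ so that $\delta^2\eta>1$; the constant weights on the non-positive side kill hypercyclicity via the standard bilateral criterion, while the calibration $\delta^2\eta>1$ is precisely what makes the estimates in Corollary~\ref{thm-criterion} go through with $\gamma_n=\alpha_n^{-1}$.
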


\begin{remark}
It is worth mentioning that $\mathscr{P}(\mathbb{N}_0) \setminus \overline{\mathscr{D}}$ and $\mathscr{P}(\mathbb{N}_0) \setminus \overline{\mathscr{B}}$ are closed under finite unions. This is so because $\overline{d}(E \cup F) \leq \overline{d}(E) + \overline{d}(F)$ and $\overline{b}(E \cup F) \leq \overline{b}(E) + \overline{b}(F)$ for all $E, F \in \mathbb{N}_0$. Moreover, it is well-known that $\mathscr{A}_\infty$, $\overline{\mathscr{D}}$, $\underline{\mathscr{D}}$ and $\overline{\mathscr{B}}$ are all right-invariant hypercyclicity sets.
\end{remark}

\begin{remark} \label{rmk-question}
	In \cite[Theorem A]{CharErnMenet16} it was proved that, for $\mathscr{F}=\mathscr{A}_\infty$, it holds $\ref{item-thm-i} \Leftrightarrow \ref{item-thm-ii}$ in Theorem \ref{thm-carac-freq-scal-set}. Additionally, there the authors inquired about the existence of characterization results for $\mathscr{F}$-hypercyclic scalar sets in the context of both frequent and $\mathscr{U}$-frequent hypercyclicity (see \cite[Question 9]{CharErnMenet16}). Notably, Theorem \ref{thm-carac-freq-scal-set} provides partial answers to this question. More precisely, condition \ref{item-thm-iii} in Theorem \ref{thm-carac-freq-scal-set} is a necessary condition for $\Gamma$ to be a $\overline{\mathscr{D}}$-hypercyclic scalar set, {\it a fortiori}, a $\underline{\mathscr{D}}$-hypercyclic scalar set. Conversely, condition \ref{item-thm-i} in Theorem \ref{thm-carac-freq-scal-set} is found to be a sufficient condition for $\Gamma$ to qualify as a $\overline{\mathscr{D}}$-hypercyclic scalar set.
\end{remark}

To prove the sufficient condition for $\Gamma$ to be a $\mathscr{F}$-hypercyclic scalar set, namely implication $\ref{item-thm-i} \Rightarrow \ref{item-thm-ii}$ of Theorem \ref{thm-carac-freq-scal-set}, we need a few lemmas.
%Next, we will prove $\ref{item-thm-i} \Rightarrow \ref{item-thm-ii}$ of Theorem \ref{thm-carac-freq-scal-set}, thereby establishing a sufficient condition for $\Gamma$ to be a $\mathscr{F}$-frequently hypercyclic scalar set. To do this, we need to introduce two lemmas.

\begin{lemma} \label{lemma-hypset-1}
	Let $T \in \mathscr{L}(X)$ be given, let $\mathscr{F}$ be a  hypercyclicity set, and let $\Gamma_1 \supset \Gamma_2 \supset \cdots \supset \Gamma_k \supset \cdots$ be a sequence of non-empty bounded subsets of $\mathbb C$ such that $\Gamma_1$ is bounded away from zero and $\rm{diam}(\Gamma_k) \to 0$ when $k \to \infty$. If, for every $k$, $x_0$ is $\mathscr{F}_{\Gamma_k}$-hypercyclic for $T$, then $x_0$ is $\mathscr{F}$-hypercyclic for $T$.
\end{lemma}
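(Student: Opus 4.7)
The plan is to show that for every non-empty open set $U \subset X$, the return set $A_U := \{n \in \mathbb{N}_0 : T^n x_0 \in U\}$ belongs to $\mathscr{F}$, which by definition of $\mathscr{F}$-hypercyclicity (i.e.\ $\mathscr{F}_{\{1\}}$-hypercyclicity) gives the conclusion. First I would fix $y \in U$ and $r > 0$ with $B(y, 2r) \subset U$, and set $c := \inf\{|z| : z \in \Gamma_1\} > 0$; note that $c$ remains a lower bound on $|z|$ for $z \in \Gamma_k$ for every $k$, since $\Gamma_k \subset \Gamma_1$.

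Next I would pick an arbitrary $\alpha_k \in \Gamma_k$ for each $k$. The nesting $\Gamma_k \supset \Gamma_{k+1}$ combined with $\mathrm{diam}(\Gamma_k) \to 0$ makes $(\alpha_k)$ a Cauchy sequence in $\mathbb{C}$, so it converges to some $\alpha$ with $|\alpha| \geq c$. Setting $\eta_k := \mathrm{diam}(\Gamma_k) + |\alpha_k - \alpha|$, we have $|z - \alpha| \leq \eta_k$ for every $z \in \Gamma_k$, with $\eta_k \to 0$.

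The crucial step is to apply the $\mathscr{F}_{\Gamma_k}$-hypercyclicity of $x_0$ not to $B(y,r)$ directly, but to the scaled non-empty open set $W := \alpha \cdot B(y,r) = B(\alpha y, |\alpha| r)$. This yields $B_k := \{n : \Gamma_k \cdot T^n x_0 \cap W \neq \emptyset\} \in \mathscr{F}$. For each $n \in B_k$ one obtains $\beta_n \in \Gamma_k$ and $v_n \in B(y,r)$ with $\beta_n T^n x_0 = \alpha v_n$, whence $T^n x_0 = (\alpha/\beta_n) v_n$; since $\|v_n\| \leq \|y\| + r$ automatically, a straightforward triangle-inequality estimate gives
$$\|T^n x_0 - y\| \leq \left|\frac{\alpha}{\beta_n} - 1\right| \|v_n\| + \|v_n - y\| \leq \frac{\eta_k}{c}(\|y\| + r) + r.$$
For $k$ large enough the first summand is strictly smaller than $r$, so $T^n x_0 \in B(y, 2r) \subset U$, i.e.\ $B_k \subset A_U$. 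Since $B_k \in \mathscr{F}$ and Furstenberg families are hereditarily upward, $A_U \in \mathscr{F}$, which completes the argument.

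The main obstacle is recognizing that the correct target open set for the hypothesis is $\alpha \cdot B(y,r)$ rather than $B(y,r)$ itself. Targeting $B(y,r)$ directly would give $T^n x_0 = \beta_n^{-1} v_n$ with a prefactor $|\beta_n^{-1} - 1|$ that need not be small (since elements of $\Gamma_k$ need not be close to $1$), forcing an independent bound on $\|T^n x_0\|$. The $\alpha$-scaling replaces this by $|\alpha/\beta_n - 1|$, which vanishes uniformly in $n \in B_k$ as $k \to \infty$ because the $\Gamma_k$ shrink around $\alpha$, while $\|v_n\|$ is automatically controlled by virtue of $v_n$ lying in the fixed bounded set $B(y,r)$.
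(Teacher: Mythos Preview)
Your proof is correct and follows the same overall strategy as the paper's: extract a limiting nonzero scalar from the shrinking nested sets $\Gamma_k$, then show that for large $k$ the $\mathscr{F}_{\Gamma_k}$-return set into a suitably chosen ball is contained in the $\mathscr{F}$-return set into $U$, and conclude by hereditary upwardness.

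The execution differs in one pleasant way. The paper applies the $\mathscr{F}_{\Gamma_{k_0}}$-hypothesis to the \emph{unscaled} ball $B(x,\varepsilon/2)$, obtaining $\|\beta_n T^n x_0 - x\| < \varepsilon/2$; to pass from $\beta_n T^n x_0$ to $\gamma T^n x_0$ it then needs a separate uniform bound $\|T^n x_0\| \leq C_{x,\varepsilon}$, which it extracts from the fact that $\Gamma_1$ is bounded away from zero. Your pre-scaling trick---applying the hypothesis to $W = \alpha \cdot B(y,r)$ instead---rewrites the approximation as $T^n x_0 = (\alpha/\beta_n) v_n$ with $v_n \in B(y,r)$, so the needed size bound $\|v_n\| \leq \|y\|+r$ comes for free and the error term $|\alpha/\beta_n - 1|$ is exactly what goes to zero. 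This is precisely the alternative you flag in your last paragraph, and it does make the argument slightly cleaner; the paper's route is the one you describe as ``forcing an independent bound on $\|T^n x_0\|$.'' Both approaches use the bounded-away-from-zero hypothesis at the same point (to bound $|\beta_n|^{-1}$ from above), so neither avoids it.
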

\begin{proof}
	Since $\Gamma_1$ is bounded away from zero, Cantor's Intersection Theorem  guarantees the existence of a (unique) non-zero scalar $\gamma \in \bigcap\limits_{k \geq 1} \overline{\Gamma_k}$. To prove that $x_0$ is $\mathscr{F}$-hypercyclic for $T$, it suffices to show that $T$ is $\mathscr{F}_{\{\gamma\}}$-hypercyclic for $T$.	Let $x \in X$ and $\varepsilon > 0$ be chosen arbitrarily. If
	$$m \in \mathcal{Q}(x,\varepsilon) := {\textstyle{\bigcup\limits_{k\geq1}}} \mathcal{N}_T(\Gamma_k x_0, B(x, \varepsilon/2)),$$ then there exist $k_m \in \mathbb N$ and $\gamma_m \in \Gamma_{k_m} \subset \Gamma_1$ such that
	\begin{align*}
		\begin{split}
			\|\gamma_m \cdot T^m x_0 - x\| < \dfrac{\varepsilon}{2} \implies  \|T^m x_0\| \leq \dfrac{\varepsilon/2+\|x\|}{\displaystyle\inf_{\xi \in \Gamma_1} |\xi|} < \infty,
		\end{split}
	\end{align*}
	where the finiteness results from $\Gamma_1$ being bounded away from zero. Thus,
	\begin{align}  \label{aux-equation-6}
		\sup_{m \in \mathcal{Q}(x,\varepsilon)}\|T^m x_0\| =: C_{x,\varepsilon} < \infty.
	\end{align}
Take $k_0 \in \mathbb N$ satisfying
	\begin{align}
		\begin{split} \label{aux-equation-7}
			{\rm{diam}}\left(\overline{\Gamma_{k_0}}\right) < \dfrac{\varepsilon}{2 \, C_{x,\varepsilon}}.
		\end{split}
	\end{align}
	For $n \in \mathcal{N}_T(\Gamma_{k_0} x_0, B(x,\varepsilon/2))$, there exists $\beta_n \in \Gamma_{k_0} \subset \Gamma_1$ such that
	$$\|\beta_n \cdot T^n x_0 - x\| < \dfrac{\varepsilon}{2},$$
therefore,
	\begin{align*}
		\begin{split}
			\|\gamma \cdot T^n x_0 - x\| &\leq \|\gamma \cdot T^n x_0 - \beta_n \cdot T^n x_0\| + \|\beta_n \cdot T^n x_0 - x\| \\ &\stackrel{\eqref{aux-equation-6}+\eqref{aux-equation-7}}{<} \dfrac{\varepsilon}{2 \, C_{x,\varepsilon}} \cdot C_{x,\varepsilon} + \dfrac{\varepsilon}{2} = \varepsilon.
		\end{split}
	\end{align*}
	Hence,
	\begin{align}\label{aux-equation-8}
		\mathcal{N}_T(\Gamma_{k_0} x_0, B(x,\varepsilon/2)) \subset \mathcal{N}_T(\gamma \, x_0, B(x,\varepsilon)).
	\end{align}
Since $x_0$ is $\mathscr{F}_{\Gamma_{k_0}}$-hypercyclic, we have $\mathcal{N}_T(\Gamma_{k_0}  x_0, B(x, \varepsilon/2)) \in \mathscr{F}$. It follows from \eqref{aux-equation-8} that $\mathcal{N}_T(\gamma \, x_0, B(x,\varepsilon)) \in \mathscr{F}$ because $\mathscr{F}$ is hereditarily upward. This establishes that $x_0$ is $\mathscr{F}_{\{\gamma\}}$-hypercyclic for $T$, as desired.
\end{proof}

The proof of the next subsublemma is  similar to the proof of \cite[Theorem 6.7]{BayMath-book}. We give the proof for the sake of completeness.
\begin{subsublemma} \label{somewhere-implies-Gamma-super}
	Let $\mathscr{F}$ be a right-invariant hypercyclicity set and let $T \in \mathscr{L}(X)$ be a hypercyclic operator. Given $\Gamma \subset \mathbb{C}^*$ and $x_0 \in X$, if there exists a non-empty open set $O \subset X$ such that $\mathcal{N}_T(\Gamma x_0, W) \in \mathscr{F}$ for every non-empty open set $W \subset O$, then $x_0$ is a $\mathscr{F}_\Gamma$-hypercyclic vector for $T$.
\end{subsublemma}
\begin{proof}
	Let $V$ be a non-empty open subset of $X$. Since $T$ is hypercyclic, it is also topologically transitive, so there exists a non-empty open subset $W \subset O$ such that $T^p(W) \subset V$ for some $p \in \mathbb{N}$. Consequently, $p + \mathcal{N}_T(\Gamma x_0, W) \subset \mathcal{N}_T(\Gamma  x_0, V)$, leading to $\mathcal{N}_T(\Gamma  x_0, V) \in \mathscr{F}$ because $\mathscr{F}$ is right-invariant and hereditarily upward. The arbitrariness of the non-empty open set $V$ %Given that $V$ was arbitrarily chosen as a non-empty open subset of $X$, we conclude
gives that $x_0$ is a $\mathscr{F}_{\Gamma}$-hypercyclic vector for $T$, as required.
\end{proof}

\begin{sublemma} \label{sublemma-hypset-2}
	Let $\mathscr{F}$ be a right-invariant hypercyclicity set, let $T \in \mathscr{L}(X)$ be a hypercyclic operator, and let $x_0 \in X$ and $p \in \mathbb N$ be given. For finitely many non-empty subsets $\Gamma_1, \ldots, \Gamma_p$ of $\mathbb C^*$, define $\mathscr{D} := \bigcup\limits_{j=1}^p \mathscr{D}_j$, where
	$$\mathscr{D}_j := \left\{x \in X : \mathcal{N}_T(\Gamma_j  x_0, B(x,2^{-m})) \in \mathscr{F} \, \text{ for any } \, m \in \mathbb{N}\right\}.$$
	If $\mathscr{D}$ is dense in $X$, then there exists $j_0 \in \{1,2,\ldots,p\}$ such that $x_0$ is a $\mathscr{F}_{\Gamma_{j_0}}$-hypercyclic vector for $T$.
\end{sublemma}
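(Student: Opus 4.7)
The plan is to reduce this to the hypothesis of Subsublemma \ref{somewhere-implies-Gamma-super} via a Baire category argument. Since $X$ is a Banach space, it is a Baire space, and $\mathscr{D}_1,\ldots,\mathscr{D}_p$ are finitely many subsets whose union $\mathscr{D}$ is dense in $X$. Using that a finite union commutes with closure, I would write
\[
X \;=\; \overline{\mathscr{D}} \;=\; \overline{\bigcup_{j=1}^{p} \mathscr{D}_j} \;=\; \bigcup_{j=1}^{p} \overline{\mathscr{D}_j},
\]
and apply the Baire Category Theorem to conclude that at least one $\overline{\mathscr{D}_{j_0}}$ has non-empty interior. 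Let $O$ be a non-empty open subset of $X$ contained in $\overline{\mathscr{D}_{j_0}}$; equivalently, $\mathscr{D}_{j_0}$ is dense in $O$.

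The next step is to verify the hypothesis of Subsublemma \ref{somewhere-implies-Gamma-super} for the index $j_0$. Given any non-empty open set $W \subset O$, density of $\mathscr{D}_{j_0}$ in $O$ provides some $x \in \mathscr{D}_{j_0} \cap W$, and openness gives $m \in \mathbb{N}$ with $B(x, 2^{-m}) \subset W$. By definition of $\mathscr{D}_{j_0}$, $\mathcal{N}_T(\Gamma_{j_0} x_0, B(x, 2^{-m})) \in \mathscr{F}$; since
\[
\mathcal{N}_T(\Gamma_{j_0} x_0, B(x,2^{-m})) \subset \mathcal{N}_T(\Gamma_{j_0} x_0, W)
\]
and $\mathscr{F}$ is hereditarily upward, $\mathcal{N}_T(\Gamma_{j_0} x_0, W) \in \mathscr{F}$.

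Having verified this condition for every non-empty open $W \subset O$, Subsublemma \ref{somewhere-implies-Gamma-super} (which uses the right-invariance of $\mathscr{F}$ and the hypercyclicity of $T$) immediately yields that $x_0$ is an $\mathscr{F}_{\Gamma_{j_0}}$-hypercyclic vector for $T$, completing the proof. I do not anticipate a serious obstacle: the argument is a standard Baire-plus-localization reduction, and the only points requiring care are the (routine) identification $\overline{\bigcup_j \mathscr{D}_j} = \bigcup_j \overline{\mathscr{D}_j}$ for finite unions and the observation that the hereditarily upward property of $\mathscr{F}$ is exactly what lets one pass from a ball inside $W$ to $W$ itself.
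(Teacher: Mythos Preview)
Your proof is correct. Both your argument and the paper's reach the same intermediate goal---namely, locating an index $j_0$ and a non-empty open set $O$ in which $\mathscr{D}_{j_0}$ is dense---and then invoke Subsublemma~\ref{somewhere-implies-Gamma-super} in exactly the same way. The only difference is in how that index and open set are found: the paper argues by induction on $p$ (if $\mathscr{D}_1\cup\cdots\cup\mathscr{D}_p$ is already dense, apply the inductive hypothesis; otherwise there is a ball missing it, and $\mathscr{D}_{p+1}$ must be dense in that ball), whereas you pass to closures, write $X=\bigcup_{j=1}^p \overline{\mathscr{D}_j}$, and appeal to the Baire Category Theorem to obtain a $\overline{\mathscr{D}_{j_0}}$ with non-empty interior. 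Your route is slightly more direct and avoids the explicit induction; in fact, the Baire theorem is more than you need here, since the statement ``a non-empty space covered by finitely many closed sets has one with non-empty interior'' follows by a short elementary induction in any topological space. Either way, the two arguments are essentially the same idea in different packaging.
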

\begin{proof} We shall proceed by induction on $p$. If $p=1$, then for each $x \in \mathscr{D} = \mathscr{D}_1$ and each $m \in \mathbb{N}$,
	we have $\mathcal{N}_T(\Gamma_1  x_0, B(x,2^{-m})) \in \mathscr{F}$. Since $\mathscr{D} = \mathscr{D}_1$ is dense in $X$, it follows that $x_0$ is $\mathscr{F}_{\Gamma_1}$-hypercyclic for $T$. Assuming that the result holds for some $p \geq 1$, let $\Gamma_1$, $\ldots$, $\Gamma_{p+1}$ and $\mathscr{D}$, $\mathscr{D}_1,$ $\ldots,$ $\mathscr{D}_{p+1}$ be according to the sublemma's assumptions. If $\mathscr{D}_1 \cup \cdots \cup \mathscr{D}_{p}$ is dense in $X$, then we can infer from the induction hypothesis that there exists $j_0 \in \{1,2,\ldots,p\}$ such that $x_0$ is $\mathscr{F}_{\Gamma_{j_0}}$-hypercyclic for $T$. Let us consider now the case that $\mathscr{D}_1 \cup \cdots \cup \mathscr{D}_{p}$ is not dense in $X$. In this case there exist $x \in X$ and $\eta > 0$ such that
$$B(x, \eta)\, {\textstyle\bigcap} \Biggl({\textstyle\bigcup\limits_{j=1}^{p}} \mathscr{D}_j\Biggl) = \emptyset.$$
However, since $\mathscr{D} = \mathscr{D}_1 \cup \cdots \cup \mathscr{D}_{p+1}$ is dense in $X$, we must have $B(x,\eta) \cap \mathscr{D}_{p+1}$ dense in $B(x,\eta)$.  Thus, for any non-empty open set $W \subset B(x,\eta) $, there exist $y \in \mathscr{D}_{p+1} \cap W$ and a sufficiently large $m \in \mathbb{N}$ such that $B(y, 2^{-m}) \subset W$. This implies that $\mathcal{N}_T(\Gamma_{p+1} x_0, B(y,2^{-m}))$ belongs to $\mathscr{F}$ and is a subset of $\mathcal{N}_T(\Gamma_{p+1} x_0, W)$. Since $\mathscr{F}$ is hereditarily upward, we conclude that $\mathcal{N}_T(\Gamma_{p+1} x_0, W) \in \mathscr{F}$. As $\mathscr{F}$ is a right-invariant hypercyclicity set and $T$ is a hypercyclic operator, by Subsublemma \ref{somewhere-implies-Gamma-super} (with $O= B(x,\eta)$) it follows that $x_0$ is a $\mathscr{F}_{\Gamma_{p+1}}$-hypercyclic vector for $T$. This completes the proof.
\end{proof}

The next simple lemma shall be useful twice.

\begin{lemma}\label{newlemma} Let $\mathscr{F}$ be a Furstenberg family and let $\Gamma \subset \mathbb C$ be given. If $x_0$ is a $\mathscr{F}_\Gamma$-hypercyclic vetor for $T \in \mathscr{L}(X)$, then $x_0$ is $\Gamma$-supercyclic for $T$. %it holds:
%\begin{align} \label{F-hypimpliesG-sup}
%	\mbox{$x_0$ is $\mathscr{F}_\Gamma$-hypercyclic for $T$  $\Longrightarrow$ $x_0$ is $\Gamma$-supercyclic for $T$.}
%\end{align}
\end{lemma}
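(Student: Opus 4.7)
The plan is to unwind the two definitions and use the single non-trivial ingredient of a Furstenberg family, namely that $\emptyset \notin \mathscr{F}$. By definition, $x_0$ being $\mathscr{F}_\Gamma$-hypercyclic for $T$ means that for every non-empty open set $U \subset X$, the set
\[
\mathcal{N}_T(\Gamma x_0, U) = \{n \in \mathbb N_0 : T^n(\Gamma x_0) \cap U \neq \emptyset\}
\]
belongs to $\mathscr{F}$. Since $\emptyset \notin \mathscr{F}$, this set must be non-empty.

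First I would fix an arbitrary non-empty open set $U \subset X$ and pick $n \in \mathcal{N}_T(\Gamma x_0, U)$. By the linearity of $T^n$, we have $T^n(\Gamma x_0) = \{\lambda T^n x_0 : \lambda \in \Gamma\}$, so the non-emptiness of the intersection with $U$ produces some $\lambda \in \Gamma$ with $\lambda T^n x_0 \in U$. This witness lies in $\Gamma \cdot \mathrm{Orb}(x_0, T)$, proving that this set meets every non-empty open subset of $X$, hence is dense. By definition, $x_0$ is then $\Gamma$-supercyclic for $T$.

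There is no substantial obstacle: the only subtle point is recognising that the Furstenberg family assumption is exactly what is needed to pass from ``$\mathcal{N}_T(\Gamma x_0, U) \in \mathscr{F}$'' to ``$\mathcal{N}_T(\Gamma x_0, U)$ is non-empty'', and that the hereditary upward condition is not used at all here. The proof is therefore short and direct.
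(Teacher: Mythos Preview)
Your proof is correct. Both arguments rest on the same single fact, namely that $\emptyset \notin \mathscr{F}$, but you use it more directly than the paper does. The paper argues by contradiction: assuming $x_0$ is not $\Gamma$-supercyclic, it takes a non-empty open set $U$ with $\mathcal{N}_T(\Gamma x_0, U)$ finite, then removes from $U$ the finitely many complex lines $T^m(\mathbb{C} x_0)$, $m \in \mathcal{N}_T(\Gamma x_0, U)$, to produce a non-empty open $V$ with $\mathcal{N}_T(\Gamma x_0, V) = \emptyset$, contradicting $\emptyset \notin \mathscr{F}$. This extra manoeuvre (removing lines in an infinite-dimensional space) is not needed for density alone: as you observe, $\mathcal{N}_T(\Gamma x_0, U) \in \mathscr{F}$ already forces the set to be non-empty, which is exactly the density of $\Gamma \cdot \mathrm{Orb}(x_0,T)$. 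The paper's detour does, however, yield a bit more: it shows that in fact $\mathcal{N}_T(\Gamma x_0, U)$ is infinite for every non-empty open $U$, echoing the remark in the introduction that non-emptiness of all return sets forces their infinitude. For the lemma as stated, your route is the cleaner one.
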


\begin{proof} Assume, for the sake of contradiction, that $x_0$ is not a $\Gamma$-supercyclic vector for $T$. In this case, there exists a non-empty open set $U$ such that $\mathcal{N}_T(\Gamma x_0, U)$ is finite. Define $V := U \setminus \bigcup\limits_{m \in \mathcal{N}_T(\Gamma x_0, U)} T^m(\mathbb{C} x_0)$, which remains a non-empty open set, because only finitely many straight lines have been removed from $U$. It is clear that $\mathcal{N}_T(\Gamma x_0, V) = \emptyset$. However, since $x_0$ is an $\mathscr{F}_\Gamma$-hypercyclic vector for $T$, it follows that $\mathcal{N}_T(\Gamma x_0, V) \in \mathscr{F}$, leading to a contradiction because $\emptyset \not\in \mathscr{F}$.
\end{proof}

\begin{lemma} \label{lemma-hypset-2}
	Let $\mathscr{F}$ be a right-invariant hypercyclicity set, let $\Gamma \subset \mathbb C$ be such that $\Gamma \setminus \{0\}$ is non-empty, bounded and bounded away from zero, and let $\delta > 0$ be given. If $\mathscr{P}(\mathbb N_0) \setminus \mathscr{F}$ is closed under finite unions and $x_0$ is $\mathscr{F}_{\Gamma}$-hypercyclic for $T \in \mathscr{L}(X)$, then there exists $\Lambda \subset \Gamma$ such that $x_0$ is $\mathscr{F}_{\Lambda}$-hypercyclic for $T$ and ${\rm{diam}}(\Lambda) < \delta$.
\end{lemma}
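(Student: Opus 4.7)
The plan is to reduce the claim to an application of Sublemma~\ref{sublemma-hypset-2}. First I would cover $\Gamma \setminus \{0\}$ by finitely many non-empty subsets $\Lambda_1, \ldots, \Lambda_p$ of $\mathbb{C}^*$, each of diameter less than $\delta$, which is possible because $\Gamma \setminus \{0\}$ sits in the compact annulus determined by its boundedness together with being bounded away from zero. The candidate $\Lambda$ will then be produced as one of the $\Lambda_j$ for which $x_0$ turns out to be $\mathscr{F}_{\Lambda_j}$-hypercyclic for $T$.

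To invoke Sublemma~\ref{sublemma-hypset-2}, two ingredients must be verified: (a) that $T$ is hypercyclic, and (b) that the set $\mathscr{D} := \bigcup_{j=1}^{p} \mathscr{D}_j$ is dense in $X$, where $\mathscr{D}_j := \{x \in X : \mathcal{N}_T(\Lambda_j x_0, B(x, 2^{-m})) \in \mathscr{F} \text{ for every } m \in \mathbb{N}\}$. For (a), Lemma~\ref{newlemma} already gives that $x_0$ is $\Gamma$-supercyclic for $T$; since $\Gamma \setminus \{0\}$ is non-empty, bounded, and bounded away from zero, \cite[Theorem~A]{CharErnMenet16} upgrades $\Gamma$-supercyclicity of $x_0$ to genuine hypercyclicity of $x_0$, so in particular $T$ is hypercyclic.

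For (b), I would show the stronger statement $X \setminus \{0\} \subset \mathscr{D}$. Given $x \neq 0$, pick $m_0 \in \mathbb{N}$ with $\|x\| > 2^{-m_0}$, so that $0 \notin B(x, 2^{-m})$ for every $m \geq m_0$. Since $T^n(0 \cdot x_0) = 0 \notin B(x, 2^{-m})$, one then has the decomposition
\[
\mathcal{N}_T(\Gamma x_0, B(x, 2^{-m})) \;=\; \bigcup_{j=1}^{p} \mathcal{N}_T(\Lambda_j x_0, B(x, 2^{-m})).
\]
The left-hand side lies in $\mathscr{F}$ because $x_0$ is $\mathscr{F}_\Gamma$-hypercyclic for $T$, and the hypothesis that $\mathscr{P}(\mathbb{N}_0) \setminus \mathscr{F}$ is closed under finite unions forces some summand $\mathcal{N}_T(\Lambda_{j(m)} x_0, B(x, 2^{-m}))$ to lie in $\mathscr{F}$. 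A pigeonhole argument then produces an index $j^* \in \{1, \ldots, p\}$ with $j(m) = j^*$ for infinitely many $m \geq m_0$; combining this with the nesting $B(x, 2^{-m'}) \subset B(x, 2^{-m})$ for $m' \geq m$ and the hereditary upwardness of $\mathscr{F}$, one obtains $\mathcal{N}_T(\Lambda_{j^*} x_0, B(x, 2^{-m})) \in \mathscr{F}$ for every $m \in \mathbb{N}$. Hence $x \in \mathscr{D}_{j^*} \subset \mathscr{D}$, and so $\mathscr{D}$ is dense in $X$.

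The main delicate point I anticipate is step~(a): nothing in the bare definition of $\mathscr{F}_\Gamma$-hypercyclicity forces $T$ itself to be hypercyclic, so one really has to first drop down to $\Gamma$-supercyclicity via Lemma~\ref{newlemma} and then cash in the boundedness and bounded-away-from-zero hypotheses through \cite[Theorem~A]{CharErnMenet16}. Once (a) and (b) are secured, Sublemma~\ref{sublemma-hypset-2} yields some $j_0 \in \{1, \ldots, p\}$ for which $x_0$ is an $\mathscr{F}_{\Lambda_{j_0}}$-hypercyclic vector for $T$, and setting $\Lambda := \Lambda_{j_0}$ gives $\mathrm{diam}(\Lambda) < \delta$ by construction.
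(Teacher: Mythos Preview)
Your proposal is correct and follows essentially the same route as the paper's proof: cover $\Gamma\setminus\{0\}$ by finitely many small pieces, use the closure of $\mathscr{P}(\mathbb N_0)\setminus\mathscr{F}$ under finite unions together with pigeonhole to show the union $\mathscr{D}$ of the associated sets is dense (the paper in fact gets $\mathscr{D}=X$ after assuming $0\notin\Gamma$, whereas you obtain $X\setminus\{0\}\subset\mathscr{D}$ by restricting to $m\ge m_0$), obtain hypercyclicity of $T$ via Lemma~\ref{newlemma} and \cite{CharErnMenet16}, and conclude with Sublemma~\ref{sublemma-hypset-2}. One cosmetic point: to guarantee the conclusion $\Lambda\subset\Gamma$, take the $\Lambda_j$ to be subsets of $\Gamma\setminus\{0\}$ (e.g.\ intersections of small balls with $\Gamma\setminus\{0\}$) rather than merely of $\mathbb{C}^*$.
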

\begin{proof}	 Without loss of generality, we can assume that $0 \notin \Gamma$. Since $\Gamma$ is a non-empty bounded set in $\mathbb{C}$, it is also a non-empty totally bounded set. Therefore, there exist elements $\gamma_1,\ldots,\gamma_p$ in $\Gamma$ such that
	\begin{align} \label{aux-equation-12}
		\begin{split}
			\Gamma = {\textstyle \bigcup\limits_{j=1}^p} \Gamma_j \ \ \mbox{where} \ \ \Gamma_j := \{\alpha \in \Gamma : |\alpha - \gamma_j| < \delta\}.
		\end{split}
	\end{align}	
For each $j \in \{1,2,\ldots,p\}$, we define
$$\mathscr{D}_j := \{x \in X : \mathcal{N}_T(\Gamma_j  x_0, B(x,2^{-m})) \in \mathscr{F} \, \mbox{ for any } \, m \in \mathbb N\}.$$
	Our initial purpose is to prove that
	\begin{align} \label{union-of-Dj}
		\begin{split}
			\mathscr{D}_1 \cup \cdots \cup \mathscr{D}_p = X.
		\end{split}
	\end{align} To do so, let $x \in X$ be arbitrarily chosen. It follows from \eqref{aux-equation-12} that, for each $m \in \mathbb N$,
	\begin{align}
		\begin{split} \label{aux-equation-9}
			\mathcal{N}_T(\Gamma  x_0, B(x,2^{-m})) ={\textstyle \bigcup\limits_{j=1}^p} \mathcal{N}_T(\Gamma_j  x_0, B(x,2^{-m})).
		\end{split}
	\end{align}
Since $x_0$ is $\mathscr{F}_{\Gamma}$-hypercyclic for $T$, for each $m \in \mathbb N$,
	\begin{align}
		\begin{split} \label{aux-equation-10}
			\mathcal{N}_T(\Gamma  x_0, B(x,2^{-m})) \in \mathscr{F}.
		\end{split}
	\end{align}
And since $\mathscr{P}(\mathbb N_0) \setminus \mathscr{F}$ is closed under finite unions, it follows from \eqref{aux-equation-9} and \eqref{aux-equation-10} that, for each $m \in \mathbb N$, there exists $j_m \in \{1,2,\ldots, p\}$ such that
	\begin{align}
		\begin{split} \label{aux-equation-11}
			\mathcal{N}_T(\Gamma_{j_m}  x_0, B(x,2^{-m})) \in \mathscr{F}.
		\end{split}
	\end{align}
	As $\{j_m : m \in \mathbb{N}\} \subset \{1,2, \ldots, p\}$, we can construct an increasing sequence of natural numbers $(m_k)_{k \geq 1}$ such that $j_{m_1} = j_{m_k}$ for every $k \in \mathbb{N}$. Combining \eqref{aux-equation-11} with this observation we get
	$$\mathcal{N}_T(\Gamma_{j_{m_1}}  x_0, B(x,2^{-m_k})) \in \mathscr{F}$$
	for every $k \in \mathbb{N}$. Hence, $x \in \mathscr{D}_{j_{m_1}}$ because $\mathscr{F}$ is hereditarily upward. This establishes \eqref{union-of-Dj}.

	Since $x_0$ is an $\mathscr{F}_{\Gamma}$-hypercyclic vector for $T$, %{\color{violet} \sout{and $\mathscr{F} \subset \mathscr{A}_\infty$},
it follows from %\textcolor{violet}{\eqref{F-hypimpliesG-sup}}
Lemma \ref{newlemma} that $x_0$ is a $\Gamma$-supercyclic vector for $T$. We observe that, for $\Gamma \setminus \{0\}$ being non-empty, bounded, and bounded away from zero, \cite[Theorem 3.1]{CharErnMenet16} asserts that a vector in $X$ is hypercyclic for an operator in $\mathscr{L}(X)$ if it is $\Gamma$-hypercyclic for that operator. %Thus, %by applying \cite[Theorem 3.1]{CharErnMenet16}, we can
An application of  this result gives that $x_0$ is hypercyclic for $T$. Consequently, based on \eqref{union-of-Dj} and Sublemma \ref{sublemma-hypset-2}, there exists $j_0 \in \{1,2,\ldots,p\}$ such that $x_0$ is an $\mathscr{F}_{\Gamma_{j_0}}$-hypercyclic vector for $T$. Defining $\Lambda := \Gamma_{j_{0}}$, the fact that ${\rm diam}(\Lambda) < \delta$ completes the proof.
\end{proof}

The proof of $\ref{item-thm-i} \Rightarrow \ref{item-thm-ii}$ of Theorem \ref{thm-carac-freq-scal-set}  is outlined in the following proposition.

\begin{proposition} \label{suffi-cond-scalar-set}
	Let $\mathscr{F}$ be a right-invariant hypercyclicity set such that $\mathscr{P}(\mathbb N_0) \setminus \mathscr{F}$ is closed under finite unions. If $\Gamma \setminus \{0\}$ is non-empty, bounded and bounded away from zero, then $\Gamma$ is an $\mathscr{F}$-hypercyclic scalar set.
\end{proposition}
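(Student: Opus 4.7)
The plan is to prove both directions of the equivalence in the definition of $\mathscr{F}$-hypercyclic scalar set. Fix a Banach space $Y$, an operator $S\in\mathscr{L}(Y)$ and a vector $y\in Y$. The implication ``$\mathscr{F}_{\{1\}}$-hypercyclic $\Rightarrow\mathscr{F}_{\Gamma}$-hypercyclic'' is routine: since the class of $\mathscr{F}_{\{1\}}$-hypercyclic vectors coincides with the class of $\mathscr{F}_{\{\gamma_0\}}$-hypercyclic vectors for any $\gamma_0\neq 0$, fix any $\gamma_0\in\Gamma\setminus\{0\}$ (available by hypothesis) and use the containment $\mathcal{N}_S(\{\gamma_0\}y,U)\subset\mathcal{N}_S(\Gamma y,U)$ together with the hereditarily upward property of $\mathscr{F}$.

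The heart of the matter is the reverse implication: if $y$ is $\mathscr{F}_\Gamma$-hypercyclic for $S$, then $y$ is $\mathscr{F}_{\{1\}}$-hypercyclic for $S$. The strategy is to construct inductively a decreasing sequence $\Gamma=\Gamma_0\supset\Gamma_1\supset\Gamma_2\supset\cdots$ of subsets of $\Gamma$ with $\mathrm{diam}(\Gamma_k)<1/k$ for each $k\geq 1$, such that $y$ remains $\mathscr{F}_{\Gamma_k}$-hypercyclic for $S$ at every stage. The construction is performed by iterating Lemma \ref{lemma-hypset-2}: given $\Gamma_{k-1}$, we apply the lemma with $\delta=1/k$ to produce $\Gamma_k\subset\Gamma_{k-1}$ of diameter less than $1/k$ preserving $\mathscr{F}_{\Gamma_k}$-hypercyclicity of $y$. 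To invoke the lemma at each step we need $\Gamma_{k-1}\setminus\{0\}$ to be non-empty, bounded and bounded away from zero; boundedness and being bounded away from zero descend to any subset of $\Gamma$, while non-emptiness of $\Gamma_{k-1}\setminus\{0\}$ is forced by $\mathscr{F}_{\Gamma_{k-1}}$-hypercyclicity, because an empty $\Gamma_{k-1}\setminus\{0\}$ would give $\mathcal{N}_S(\Gamma_{k-1}y,U)=\emptyset\notin\mathscr{F}$, a contradiction.

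With the nested sequence in hand, Lemma \ref{lemma-hypset-1} applies directly: all $\Gamma_k$ are contained in the bounded set $\Gamma$, $\Gamma_1$ is bounded away from zero, the diameters shrink to $0$, and $y$ is $\mathscr{F}_{\Gamma_k}$-hypercyclic at every stage. The conclusion of that lemma is precisely that $y$ is $\mathscr{F}$-hypercyclic (i.e.\ $\mathscr{F}_{\{\gamma\}}$-hypercyclic for some $\gamma\neq 0$) for $S$, which is the same as $\mathscr{F}_{\{1\}}$-hypercyclicity. The only real obstacle is the inductive bookkeeping, that is, verifying that the hypotheses of Lemma \ref{lemma-hypset-2} persist from one step to the next; this is transparent once the non-emptiness argument above is recorded. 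The assumptions on $\mathscr{F}$ (right-invariance and $\mathscr{P}(\mathbb{N}_0)\setminus\mathscr{F}$ closed under finite unions) enter precisely through each application of Lemma \ref{lemma-hypset-2}, so they need not be revisited.
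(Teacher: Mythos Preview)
Your proposal is correct and follows essentially the same approach as the paper: iterate Lemma~\ref{lemma-hypset-2} to obtain a nested sequence of shrinking subsets of $\Gamma$ for which $y$ remains $\mathscr{F}_{\Gamma_k}$-hypercyclic, then apply Lemma~\ref{lemma-hypset-1}. The paper's proof is terser (it only records the nontrivial direction and starts the chain at $\Gamma\setminus\{0\}$, which cleanly guarantees that $\Gamma_1$ is bounded away from zero---a point you assert but should secure by the same device), while you are more explicit about the easy direction and the inductive bookkeeping.
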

\begin{proof}
	Suppose that $X$ is a Banach space and $x_0 \in X$ is an $\mathscr{F}_\Gamma$-hypercyclic vector for $T \in \mathscr{L}(X)$. Our goal is to show that  $x_0$ is an $\mathscr{F}$-hypercyclic vector for $T$. From Lemma \ref{lemma-hypset-2} there exists a decreasing sequence $$\Gamma \setminus \{0\} =: \Gamma_1 \supset \Gamma_2 \supset \cdots \supset \Gamma_k \supset \cdots,$$ with the property that each $\Gamma_k$ is non-empty and ${\rm{diam}}(\Gamma_k) \to 0$ as $k \to +\infty$. Furthermore, for each $k \in \mathbb{N}$, $x_0$ is an $\mathscr{F}_{\Gamma_k}$-hypercyclic vector for $T$.  An application of Lemma \ref{lemma-hypset-1} concludes the proof.
\end{proof}

Before establishing the implication $\ref{item-thm-ii} \Rightarrow \ref{item-thm-iii}$ in Theorem \ref{thm-carac-freq-scal-set}, let us explore one consequence of Proposition \ref{suffi-cond-scalar-set}.

\begin{corollary} \label{corol-Gammasuper&hiper}
	Let $\Gamma \subset \mathbb{C}$ be such that $\Gamma \setminus \{0\}$ is non-empty, bounded, and bounded away from zero, and let $B_{\bf w}$ be either a unilateral (bilateral, respectively) backward weighted shift operator on $\ell_p(\mathbb{N})$ (on $\ell_p(\mathbb{Z})$, $1 \leq p < \infty$, respectively). The following statements are equivalent:
	\begin{enumerate}[label=(\Alph*)]
		\item \label{item-A-3} $B_{\bf w}$ is frequently $\Gamma$-supercyclic.
		\item \label{item-B-3} $B_{\bf w}$ is $\mathscr U$-frequently $\Gamma$-supercyclic.
		\item \label{item-C-3} $B_{\bf w}$ is reiteratively $\Gamma$-supercyclic.
		\item \label{item-D-3} The series $\sum\limits_{n \geq 1} (1/w_1 \cdots w_n)^p$ and $\sum\limits_{n < 0}(w_{-1} \cdots w_n)^p$ are convergent if $B_{\bf w}$ is bilateral. In the case that $B_{\bf w}$ is a unilateral backward weighted shift operator, only the convergence of the first series is required.
	\end{enumerate}
\end{corollary}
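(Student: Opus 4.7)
The plan is to leverage Proposition \ref{suffi-cond-scalar-set} to reduce each of (A), (B), (C) to its standard ($\{1\}$-scalar) counterpart, and then close the chain via the known characterizations of frequent, $\mathscr{U}$-frequent, and reiterative hypercyclicity for weighted backward shifts. The trivial forward implications (A) $\Rightarrow$ (B) $\Rightarrow$ (C) follow directly from $\underline{d}(E) \leq \overline{d}(E) \leq \overline{b}(E)$ (they already gave the inclusions in \eqref{inclusions}), so the real content lies in (C) $\Rightarrow$ (D) $\Rightarrow$ (A).

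To carry out the reduction, I would verify that each of the three Furstenberg families $\overline{\mathscr{D}}, \underline{\mathscr{D}}, \overline{\mathscr{B}}$ fits into the scope of Proposition \ref{suffi-cond-scalar-set}. Right-invariance and the hypercyclicity-set property for all three are recorded in Remark \ref{rmk-hp-set} together with the remark immediately after Theorem \ref{thm-carac-freq-scal-set}. The closure of $\mathscr{P}(\mathbb{N}_0) \setminus \mathscr{F}$ under finite unions is explicit in that same remark for $\mathscr{F} = \overline{\mathscr{D}}$ and $\mathscr{F} = \overline{\mathscr{B}}$, and the same subadditivity argument covers $\mathscr{F} = \underline{\mathscr{D}}$. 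Since $\Gamma \setminus \{0\}$ is assumed non-empty, bounded, and bounded away from zero, Proposition \ref{suffi-cond-scalar-set} then asserts that $\Gamma$ is an $\mathscr{F}$-hypercyclic scalar set for each of the three families. By Definition \ref{def-hypset-and-Fhypope}(B), this means that (A), (B), and (C) are respectively equivalent to $B_{\bf w}$ being frequently, $\mathscr{U}$-frequently, and reiteratively hypercyclic in the usual (scalar-free) sense.

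It then remains to splice in the classical weighted shift theorems to close the loop: on both $\ell_p(\mathbb{N})$ and $\ell_p(\mathbb{Z})$, condition (D) is equivalent to the frequent hypercyclicity of $B_{\bf w}$ (Bayart--Grivaux), and combined with the previous step this gives (D) $\Rightarrow$ (A). Moreover, by a result from the literature on reiterative hypercyclicity of weighted shifts (extractable from the line of work of Bès--Menet--Peris--Puig cited in the paper), reiterative hypercyclicity of $B_{\bf w}$ already forces condition (D), yielding (C) $\Rightarrow$ (D); the bilateral case requires only the obvious adjustment for the two series in (D). The main obstacle I anticipate is pinning down the correct citation for the implication "reiteratively hypercyclic weighted shift $\Rightarrow$ (D)", since the forward direction from (D) is classical but the converse from the weakest of the three notions is more recent; verifying the closure-under-unions condition for $\underline{\mathscr{D}}$ is a minor technical point handled by the same subadditivity argument already used for $\overline{\mathscr{D}}$ and $\overline{\mathscr{B}}$.
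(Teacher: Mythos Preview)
Your overall strategy matches the paper's, but there is a genuine gap in the reduction step. You want Proposition~\ref{suffi-cond-scalar-set} to apply to all three families $\underline{\mathscr{D}},\overline{\mathscr{D}},\overline{\mathscr{B}}$, and you dismiss the missing case as ``a minor technical point handled by the same subadditivity argument''. It is not: for the family of sets with positive \emph{lower} density---the one governing frequent hypercyclicity---the complement is \emph{not} closed under finite unions, because $\underline{d}$ fails to be subadditive. For instance, with $E=\bigcup_{k\text{ odd}}[k!,(k+1)!)$ and $F=\mathbb{N}_0\setminus E$ one has $\underline{d}(E)=\underline{d}(F)=0$ while $\underline{d}(E\cup F)=1$. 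Hence Proposition~\ref{suffi-cond-scalar-set} is unavailable for that family, and your claimed equivalence ``(A) $\Leftrightarrow$ $B_{\bf w}$ frequently hypercyclic'' is not established by this route.

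The repair is precisely what the paper does: it never needs that equivalence. For (C)$\Rightarrow$(D) one applies Proposition~\ref{suffi-cond-scalar-set} (via Theorem~\ref{thm-carac-freq-scal-set}) only with $\mathscr{F}=\overline{\mathscr{B}}$, where the closure-under-unions hypothesis does hold, to pass from reiteratively $\Gamma$-supercyclic to reiteratively hypercyclic; then \cite[Theorem~12]{BesMenetPerisPuig16} upgrades this to frequently hypercyclic, and \cite[Theorems~3 and~4]{BayRuz2015} (Bayart--Ruzsa rather than Bayart--Grivaux) yields the series conditions in (D). For (D)$\Rightarrow$(A), once \cite{BayRuz2015} gives that $B_{\bf w}$ is frequently hypercyclic, the passage to frequently $\Gamma$-supercyclic is immediate from $\Gamma\setminus\{0\}\neq\emptyset$: any $\mathscr{F}$-hypercyclic vector is $\mathscr{F}_{\{\gamma\}}$-hypercyclic for every nonzero $\gamma$, hence $\mathscr{F}_\Gamma$-hypercyclic. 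No scalar-set argument for the lower-density family is required.
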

\begin{proof} The implications $\ref{item-A-3} \Rightarrow \ref{item-B-3} \Rightarrow \ref{item-C-3}$ follow directly from the definitions.

	To establish $\ref{item-C-3} \Rightarrow \ref{item-D-3}$, let us assume that $B_{\bf w}$ is reiteratively $\Gamma$-supercyclic. Calling on Theorem $\ref{thm-carac-freq-scal-set}$, we get that $B_{\bf w}$ is reiteratively hypercyclic. It follows from \cite[Theorem 12]{BesMenetPerisPuig16} that $B_{\bf w}$ is frequently hypercyclic. Now, the statements about the convergent series follow from \cite[Theorems 3 and 4]{BayRuz2015}.

	Lastly, in order to prove $\ref{item-D-3} \Rightarrow \ref{item-A-3}$, we call on \cite[Theorems 3 and 4]{BayRuz2015} once again to ensure that the conditions on the weights in $\ref{item-C-3}$ imply that $B_{\bf w}$ is frequently hypercyclic. Considering that $\Gamma \setminus \{0\} \neq \emptyset$, it follows that $B_{\bf w}$ must be frequently $\Gamma$-supercyclic, thereby concluding the proof. %\textcolor{red}{(Aqui também está usando vários resultados de outros artigos. Não sei se vale a pena falar essas coisas antes, talvez colocar um resumo destas coisas sobre o $B_{\bf w}$ e dar as referências) } %\textcolor{blue}{Geraldo: prefiro deixar como est\'a.}
\end{proof}

Now we proceed to prove the %The next result, Proposition \ref{necessary-cond-scalar-set}, establishes the
implication $\ref{item-thm-ii} \Rightarrow \ref{item-thm-iii}$ of Theorem \ref{thm-carac-freq-scal-set}. %To prove this proposition, we provide a
The next lemma regarding exponential convergence of sequences shall be required.

\begin{lemma} \label{lemma-exponential-conver}
	Let ${(\alpha_k)}_{k \geq 0}$ be a sequence in $\mathbb{C}^*$ such that
	\begin{eqnarray}	\label{aux-equation-37}
		\limsup\limits_{k \to \infty} \frac{|\alpha_{k+1}|}{|\alpha_k|} > 0 \ \ \mbox{and} \ \ |\alpha_k| \searrow 0 \, \mbox{ as } \, k \to +\infty.
	\end{eqnarray}
	Then, there exist $\delta \in (0,1)$ and a subsequence ${(\alpha_{k_\ell})}_{\ell \geq 0}$ of ${(\alpha_k)}_{k \geq 0}$ such that, for each $\ell \in \mathbb{N}_0$,
	\begin{eqnarray} \label{aux-equation-35}
		\displaystyle	\delta^2 \leq \frac{|\alpha_{k_{\ell+1}}|}{|\alpha_{k_\ell}|} \leq \delta.
	\end{eqnarray}
\end{lemma}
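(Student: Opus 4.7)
The plan is to pass to logarithmic coordinates: set $\beta_k := -\log|\alpha_k|$, so that $(\beta_k)$ is non-decreasing with $\beta_k \to +\infty$ (from $|\alpha_k| \searrow 0$) and, letting $L := \limsup_k |\alpha_{k+1}|/|\alpha_k| \in (0,1]$, one has $\liminf_k(\beta_{k+1} - \beta_k) = -\log L < +\infty$. With $c := -\log\delta$, the conclusion rewrites as: find $k_0 < k_1 < \cdots$ such that $\beta_{k_{\ell+1}} - \beta_{k_\ell} \in [c, 2c]$ for every $\ell \geq 0$.

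First, I would pick $\delta \in (0,1)$ strictly less than $L$, so that $c > -\log L$; the set $T := \{k \geq 1 : \beta_k - \beta_{k-1} \leq c\}$ of ``small-jump arrival'' indices is then infinite by the $\liminf$ identity. Second, I would construct $(k_\ell)_{\ell \geq 0}$ inductively along $T$: fix $k_0 \in T$, and for each $\ell \geq 0$ let $k_{\ell+1}$ be the smallest element of $T$ with $k_{\ell+1} > k_\ell$ and $\beta_{k_{\ell+1}} - \beta_{k_\ell} \geq c$. Such a $k_{\ell+1}$ exists because $T$ is infinite and $\beta_k \to +\infty$, and the lower bound $\beta_{k_{\ell+1}} - \beta_{k_\ell} \geq c$---equivalently, $|\alpha_{k_{\ell+1}}|/|\alpha_{k_\ell}| \leq \delta$---is immediate from the construction.

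The main obstacle is verifying the matching upper bound $\beta_{k_{\ell+1}} - \beta_{k_\ell} \leq 2c$, equivalently $|\alpha_{k_{\ell+1}}|/|\alpha_{k_\ell}| \geq \delta^2$. I would combine two ingredients here: minimality of $k_{\ell+1}$ in $T$ forces any $T$-element strictly between $k_\ell$ and $k_{\ell+1}$ to have $\beta$-value below $\beta_{k_\ell} + c$, while membership $k_{\ell+1} \in T$ bounds the single arriving jump $\beta_{k_{\ell+1}} - \beta_{k_{\ell+1}-1}$ by $c$. Bookkeeping the possibly large jumps at intermediate indices outside $T$ is the delicate step; the strict inequality $\delta < L$ is what provides enough density of $T$ to absorb those contributions and close the estimate. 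Exponentiating back, the two bounds combine to give the desired sandwich $\delta^2 \leq |\alpha_{k_{\ell+1}}|/|\alpha_{k_\ell}| \leq \delta$ for every $\ell$.
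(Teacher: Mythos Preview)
Your upper-bound step has a genuine gap that the ``density of $T$'' heuristic cannot close. Take a sequence whose jumps $\beta_{k}-\beta_{k-1}$ alternate between a fixed small value $s$ (say $s=0.1$) and an unbounded sequence $M_1,M_2,\ldots\to\infty$. Then $\liminf_k(\beta_{k}-\beta_{k-1})=s$, so the hypotheses hold, and for any $c>s$ your set $T$ consists exactly of the odd indices. Starting from $k_0=1$, the next $T$-element is $k_1=3$, and $\beta_{3}-\beta_{1}=s+M_1$; more generally $\beta_{2j+1}-\beta_{2j-1}=s+M_j\to\infty$, so your greedy choice yields $\beta_{k_{\ell+1}}-\beta_{k_\ell}$ unbounded, never $\le 2c$. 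The point is that minimality only controls $\beta$ at the \emph{last $T$-index before} $k_{\ell+1}$, and membership $k_{\ell+1}\in T$ only controls the \emph{final} jump; between the two there can sit a run of indices outside $T$ contributing an arbitrarily large increment. Choosing $\delta<L$ makes $T$ infinite but gives no control on the $\beta$-gaps between consecutive $T$-elements.

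The paper proceeds differently: it first reduces (the ``without loss of generality, $E=\mathbb N_0$'' step) to the situation where \emph{every} consecutive ratio satisfies $|\alpha_{k+1}|/|\alpha_k|\ge\delta$, i.e.\ every jump is $\le c$; only then does it run the greedy selection, now over \emph{all} indices rather than a sparse subset. In that setting the upper bound is immediate: $k_{\ell+1}-1$ fails the threshold by minimality, and the single jump from $k_{\ell+1}-1$ to $k_{\ell+1}$ is $\le c$, so the total is $\le 2c$. Your attempt to avoid the preliminary reduction by restricting the greedy search to $T$ is exactly what breaks; to salvage your plan you would have to first pass to a subsequence along which \emph{all} increments are $\le c$ and then run the greedy step on that subsequence --- which is the paper's route.
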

\begin{proof}
	From (\ref{aux-equation-37}) we can take $\delta \in (0,1)$ and an infinite subset $E$ of $\mathbb{N}_0$ such that, for each $k \in E$,
	\begin{eqnarray} \label{aux-equation-36}
		\displaystyle\frac{|\alpha_{k+1}|}{|\alpha_k|} \geq \delta.
	\end{eqnarray}
	Without loss of generality, we consider $E = \mathbb{N}_0$. We choose $k_0 = 0$ and, as $|\alpha_k| \searrow 0$ as $k \to \infty$, we can choose $k_1 > k_0$ as the smallest natural number such that $|\alpha_{k_1}|/|\alpha_{k_0}| < \delta$. Note that $k_1$ must satisfy $k_1 > k_0 + 1$ due to (\ref{aux-equation-36}). With this selection, we obtain
	\begin{eqnarray}
		\displaystyle	\frac{|\alpha_{k_1}|}{|\alpha_{k_0}|} = \frac{|\alpha_{k_1}|}{|\alpha_{k_1-1}|} \cdot \frac{|\alpha_{k_1-1}|}{|\alpha_{k_0}|} \geq \delta^2.
	\end{eqnarray}
Assume that we have chosen $\alpha_{k_0}, \alpha_{k_1}, \ldots, \alpha_{k_{\ell+1}}$ satisfying (\ref{aux-equation-35}). Let $k_{\ell +2} > k_{\ell +1}$ be the smallest natural number such that $|\alpha_{k_{\ell+2}}|/|\alpha_{k_{\ell+1}}| < \delta$. Reasoning as above, it follows that $|\alpha_{k_{\ell+2}}|/|\alpha_{k_{\ell+1}}| \geq \delta^2$. This concludes the proof.
\end{proof}

\begin{proposition} \label{necessary-cond-scalar-set}
	Let $\mathscr{F}$ be a hypercyclicity set. If $\Gamma$ is an $\mathscr{F}$-hypercyclic scalar set, then $\Gamma \setminus \{0\}$ is non-empty, bounded, and $\mathcal{C}_0(\Gamma) \subset \mathcal{R}(\Gamma)$. %{\color{purple} (OBS: Pode existir uma sequência ${(\alpha_k)}_{k \geq 1}$ em $\Gamma \setminus \{0\}$ convergindo para zero?)}\textcolor{blue}{?????????}
\end{proposition}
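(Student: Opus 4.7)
The plan is to prove each of the three conclusions by contrapositive, producing in every case a Banach space $X$, an operator $T\in\mathscr{L}(X)$ and a vector $x_0\in X$ that is $\mathscr{F}_\Gamma$-hypercyclic for $T$ but not $\mathscr{F}_{\{1\}}$-hypercyclic, thereby contradicting that $\Gamma$ is an $\mathscr{F}$-hypercyclic scalar set.

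For the non-emptiness of $\Gamma\setminus\{0\}$: if $\Gamma\subset\{0\}$, then $T^n(\Gamma x_0)\subset\{0\}$ for every $T$ and $x_0$, so $\mathcal{N}_T(\Gamma x_0,U)=\emptyset\notin\mathscr{F}$ for every non-empty open $U$ not containing the origin, and no vector whatsoever can be $\mathscr{F}_\Gamma$-hypercyclic; since $\mathscr{F}$ is a hypercyclicity set, some separable infinite-dimensional Banach space supports an $\mathscr{F}_{\{1\}}$-hypercyclic operator (Remark~\ref{rmk-hp-set}), giving the contradiction. For the boundedness of $\Gamma$: if $\Gamma$ is unbounded, take the unilateral weighted backward shift $B_w\in\mathscr{L}(\ell_2(\mathbb{N}))$ with $w_k\equiv 1/2$. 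All orbits of $B_w$ tend to zero, so it is not hypercyclic, let alone $\mathscr{F}_{\{1\}}$-hypercyclic; on the other hand, Theorem~\ref{pps-Bw} ensures that $B_w$ is $\mathscr{F}_\Gamma$-hypercyclic, which is the desired contradiction.

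The most delicate step is $\mathcal{C}_0(\Gamma)\subset\mathcal{R}(\Gamma)$. Suppose there is $(\alpha_k)\in\mathcal{C}_0(\Gamma)\setminus\mathcal{R}(\Gamma)$; Lemma~\ref{lemma-exponential-conver} supplies a subsequence and a parameter $\delta\in(0,1)$ such that $\delta^2\leq|\alpha_{k+1}/\alpha_k|\leq\delta$ for every $k$, and a further thinning yields $\arg\alpha_k\to\phi_0$ for some $\phi_0$. The intended counterexample is $X=\mathbb{C}$, $x_0=1$ and $T(z)=\delta^{-1}e^{i\theta}z$ with $\theta/(2\pi)$ irrational. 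Since $|T^n x_0|=\delta^{-n}\to\infty$, the operator $T$ is not hypercyclic and hence not $\mathscr{F}_{\{1\}}$-hypercyclic. For the $\mathscr{F}_\Gamma$-hypercyclicity of $x_0$, the condition $\alpha_k T^n x_0\in B(z,\varepsilon)$ splits into a modulus constraint $|\alpha_k|\delta^{-n}\approx|z|$ (which, by the quasi-geometric decay, ties each $n$ in a syndetic set $M\subset\mathbb{N}_0$ to an essentially unique admissible $k=k(n)$) together with an argument constraint $\arg\alpha_{k(n)}+n\theta\equiv\arg z\pmod{2\pi}$; Weyl's equidistribution theorem then gives that the resulting visiting set has positive lower density in $\mathbb{N}_0$.

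The principal obstacle is showing that this visiting set always lies in $\mathscr{F}$. For the standard families $\overline{\mathscr{D}},\underline{\mathscr{D}},\overline{\mathscr{B}},\mathscr{A}_\infty$ positive lower density suffices, but a general hypercyclicity set $\mathscr{F}$ need not contain every such subset. If the $\mathbb{C}$-valued construction proves insufficient for some $\mathscr{F}$, I would instead try to apply Corollary~\ref{thm-criterion} to a weighted pseudo-shift on $\ell_2$ or $c_0$ with $\gamma_n=\alpha_n^{-1}$ and weights carefully tuned to $\delta$; the difficulty there is that the expansion needed to make Condition~(i) summable is exactly what tends to force $\mathscr{F}_{\{1\}}$-hypercyclicity of $T$, so some decoupling (via direct sums or variable weights aligned with the sequence $(\alpha_k)$) will be needed to make both requirements compatible.
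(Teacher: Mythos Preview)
Your first two parts are fine and match the paper's approach (the paper even uses the unweighted backward shift, norm~$1$, for boundedness). The third part, however, is not a proof.

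First, the paper's standing convention is that all Banach spaces are infinite-dimensional, so the space $X=\mathbb{C}$ is not admissible in Definition~\ref{def-hypset-and-Fhypope}(B); a counterexample living on $\mathbb{C}$ says nothing about whether $\Gamma$ is an $\mathscr{F}$-hypercyclic scalar set. Second, even waiving that convention, you yourself note that your equidistribution argument only yields positive lower density of the visiting set, which places it in $\underline{\mathscr{D}}$ but not in an arbitrary hypercyclicity set~$\mathscr{F}$. So the argument as written does not establish $\mathscr{F}_\Gamma$-hypercyclicity in general.

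Your fallback plan is the right one, and your perceived obstruction is illusory---but only once you pass to a \emph{bilateral} shift. The paper takes $\eta>1$ with $\delta^2\eta>1$ and the bilateral weighted shift $B_{\bf w}$ on $\ell_2(\mathbb{Z})$ with $w_k=\eta$ for $k>0$ and $w_k=1$ for $k\le 0$. Hypercyclicity of a bilateral weighted shift requires a condition on \emph{both} tails of the weights; the constant weights on the nonpositive side kill it outright (this is \cite[Example~4.15]{Gro-ErdMang2011-book}), so $B_{\bf w}$ is not hypercyclic and hence not $\mathscr{F}$-hypercyclic. Meanwhile the expansion on the positive side is exactly what makes Corollary~\ref{thm-criterion} go through with $\gamma_n=\alpha_n^{-1}$: using \eqref{aux-equation-35} one gets geometric control by $\delta^{r-n}$ on the $T$-part of condition~\ref{item-i} and by $(\delta^2\eta)^{-(n-r)}$ on the $S$-part, both summable. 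This is precisely the ``decoupling'' you were looking for; the point is that on a \emph{unilateral} shift the summability of $\sum (w_1\cdots w_n)^{-p}$ is both what Corollary~\ref{thm-criterion} needs and what forces frequent hypercyclicity, whereas on a bilateral shift the two requirements live on different halves of~$\mathbb{Z}$.
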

\begin{proof}
	It is evident that $\Gamma \setminus \{0\}$ cannot be empty.  Suppose that $\Gamma \setminus \{0\}$ is unbounded. From Theorem \ref{pps-Bw} we know that the backward shift operator $B(e_n) = e_{n+1}$ on $\ell_2(\mathbb N)$ is $\mathscr{F}_\Gamma$-hypercyclic, despite not being hypercyclic due to its norm being 1. Thus, $B$ is not $\mathscr{F}$-hypercyclic %{\color{violet} due to \eqref{F-hypimpliesG-sup}}
by Lemma \ref{newlemma}. This contradiction proves that $\Gamma \setminus \{0\}$ is bounded.

	Suppose now that there exists a sequence ${(\alpha_k)}_{k \geq 0}$ in $\mathcal{C}_0(\Gamma) \setminus \mathcal{R}(\Gamma)$. In other words, ${(\alpha_k)}_{k \geq 0}$ satisfies the conditions in  \eqref{aux-equation-37}. By using Lemma \ref{lemma-exponential-conver}, we can find a subsequence of ${(\alpha_k)}_{k \geq 0}$ that satisfies \eqref{aux-equation-35} for some $\delta \in (0,1)$. For the sake of simplicity we still denote this subsequence by ${(\alpha_k)}_{k \geq 0}$. Thus,
	\begin{align} \label{aux-equation-38}
		\delta^2 \leq \dfrac{|\alpha_{k+1}|}{|\alpha_k|} \leq \delta \ \ \mbox{for some $\delta \in (0,1)$ and every $k \in \mathbb{N}_0$.}
	\end{align}
Consider $\eta > 1$ such that $ \delta^2 \cdot \eta > 1$. Define $B_{\bf w}$ as the bilateral weighted shift operator on $\ell_2(\mathbb Z)$ with $w_k = \eta$ for $k > 0$ and $w_k = 1$ otherwise. According to \cite[Example 4.15]{Gro-ErdMang2011-book}, $B_{\bf w}$ is not hypercyclic, hence it is not $\mathscr{F}$-hypercyclic, because $\prod\limits_{i \in A} w_i = 1$ whenever $A \subset \mathbb{Z}_{\leq 0}$ is non-empty. Our task now is to establish that $B_{\bf w}$ is $\mathscr{F}_\Gamma$-hypercyclic, which can be achieved by applying Corollary \ref{thm-criterion}. To do so, define $\gamma_k := \alpha_k^{-1}$ for each $k \in \mathbb N_0$, let $\mathcal{D} := c_{00}(\mathbb Z)$ (which is dense in $\ell_2(\mathbb Z)$), and let $S \colon \mathcal{D} \to \mathcal{D}$ be the (unique) linear mapping such that $S(e_k) := w_k^{-1} \, e_{k+1}$ for every $k$. It is clear that $B_{\bf w} S z = z$ for each $z \in \mathcal D$, showing that condition $\ref{item-ii}$ of Corollary \ref{thm-criterion} is satisfied. To check condition  $\ref{item-i}$, fix
	$$y := \sum_{j=-N}^N y_j \, e_j \in \mathcal{D} \, \mbox{ with } \, N \in \mathbb N.$$
	Note that, for $r \geq m \geq 2$,
	\begin{align}
		\begin{split} \label{aux-equation-13}
			\sum_{n=0}^{r-m}{\left\|\dfrac{\gamma_n}{\gamma_r} \, B_{\bf w}^{r-n} y\right\|}_2 &\leq \sum_{n=0}^{r-m}\dfrac{|\alpha_r|}{|\alpha_n|} \eta^{N} {\|y\|}_2 = \sum_{n=0}^{r-m}\dfrac{|\alpha_{n+1} \cdots \alpha_r|}{|\alpha_n \cdots \alpha_{r-1}|} \eta^{N} {\|y\|}_2 \\ &\stackrel{\eqref{aux-equation-38}}{\leq} \sum_{n=0}^{r-m}  \delta^{r-n} \eta^{N} {\|y\|}_2 \leq \eta^N \, {\|y\|}_2  \sum_{n=m}^\infty \delta^n  \xrightarrow{m \to \infty} 0.
		\end{split}
	\end{align}
	On the other hand, for all $m \geq 1$ and $r \geq 1$,
	\begin{align}
		\begin{split} \label{aux-equation-14}
			\sum_{n=m+r}^{\infty} {\left\|\dfrac{\gamma_n}{\gamma_r} \, S^{n-r} y\right\|}_2 &\leq \sum_{n=m+r}^{\infty} \dfrac{|\alpha_r|}{|\alpha_n|} \, \eta^{r-n+N} {\|y\|}_2  = \sum_{n=m+r}^{\infty} \dfrac{|\alpha_r \cdots \alpha_{n-1}|}{|\alpha_{r+1} \cdots \alpha_n|} \, \eta^{r-n+N} {\|y\|}_2 \\ &\stackrel{\eqref{aux-equation-38}}{\leq} \eta^N \, {\|y\|}_2 \cdot \sum_{n=m+r}^{\infty}  \left(\dfrac{1}{\delta^2 \cdot \eta}\right)^{n-r}  \\ &= \eta^N \, {\|y\|}_2 \cdot \sum_{n=m}^{\infty}  \left(\dfrac{1}{\delta^2 \cdot \eta}\right)^{n} \xrightarrow{m \to \infty} 0.
		\end{split}
	\end{align}
	From \eqref{aux-equation-13} and \eqref{aux-equation-14} we conclude that conditon $\ref{item-i}$ of Corollary \ref{thm-criterion} is satisfied. %	The validity of C
The following calculation shows that condition $\ref{item-0}$ also holds:%in Corollary \ref{thm-criterion} follows from the calculation below:
	\begin{align}
		\begin{split}
			\sum_{n=m}^\infty {\|\gamma_n \cdot S^n y\|}_2 &\leq \sum_{n=m}^{\infty} \dfrac{|\alpha_{n-1} \cdots \alpha_0|}{|\alpha_n \cdots \alpha_1 |} \, \dfrac{1}{|\alpha_0|} \, \eta^{-n+N} {\|y\|}_2 \\ &\stackrel{\eqref{aux-equation-38}}{\leq} \dfrac{\eta^N \, {\|y\|}_2}{|\alpha_0|} \cdot \sum_{n=m}^\infty \left(\dfrac{1}{\delta^2 \cdot \eta}\right)^n \xrightarrow{m \to \infty} 0.
		\end{split}
	\end{align}
It follows that $B_{\bf w}$ is $\mathscr{F}_\Gamma$-hypercyclic. This contradiction completes the proof.
\end{proof}

\section*{Acknowledgement}

The first author extends gratitude to the Instituto de Matem\'atica e Estat\'istica at Universidade Federal de Uberlândia for their gracious hospitality during the author's one-year visit.

\bibliographystyle{amsplain}

\end{document}